\definecolor{blue}{rgb}{0,0.5,0.5}
\definecolor{ocean}{rgb}{0.00,0.26,0.50}
\newtheorem{theorem}{Theorem}[section]
\newtheorem{proposition}[theorem]{Proposition}
\newtheorem{definition}[theorem]{Definition}
\newtheorem{remark}[theorem]{Remark}
\newtheorem{example}[theorem]{Example}
\newtheorem{corollary}[theorem]{Corollary}
\newtheorem{lemma}[theorem]{Lemma}
\title[Locally quasiconvex hyperbolic TDLC groups and CT maps]{Combination of Locally Quasiconvex hyperbolic TDLC groups and Cannon-Thurston maps}
\author{Swarnali Datta, Arunava Mandal, and Ravi Tomar}
\address{Department of Mathematics,
Indian Institute of Technology Roorkee,
Uttarakhand 247667, India}
\email{swarnali\_d1@ma.iitr.ac.in}
\address{Department of Mathematics,
Indian Institute of Technology Roorkee,
Uttarakhand 247667, India}
\email{arunava@ma.iitr.ac.in}
\address{Beijing International Center for Mathematical Research, Peking University, No. 5 Yiheyuan Road Haidian District, Beijing, P.R.China 100871}
\email{ravitomar547@gmail.com}
\begin{document}
\maketitle

\begin{abstract}
In this article, we study acylindrical graphs of groups, local quasiconvexity, and Cannon-Thurston maps in the setting of totally disconnected locally compact (TDLC) hyperbolic groups, extending several fundamental notions and results from discrete hyperbolic groups to this broader context. Leveraging Dahmani's technique and a topological characterization of hyperbolic TDLC groups in terms of uniform convergence groups given by Carette-Dreesen, we prove a combination theorem for an acylindrical graph of hyperbolic TDLC groups and give an explicit construction of the Gromov boundary of the fundamental group of the given graph of groups. 
 Using the description of the Gromov boundary, we prove our main result: a combination theorem for an acylindrical graph of locally quasiconvex hyperbolic TDLC groups. Further, we generalise the work of Mosher, proving the existence of quasiisometric sections for a given short exact sequence of hyperbolic TDLC groups. This leads us to prove the existence of a Cannon-Thurston map for a normal hyperbolic subgroup of a hyperbolic TDLC group, generalising a theorem of Mj.
\end{abstract}

\subjclass{\it 2020 Mathematics Subject Classification:} Primary 20F65, 20F67; Secondary 22D05 

\keywords{\bf Keywords:} Hyperbolic TDLC groups, Convergence TDLC groups, Local Quasiconvexity, Cannon-Thurston maps.

\setcounter{tocdepth}{2}
\tableofcontents

\section{Introduction}\label{section:intro}

In his essay, Gromov \cite{gromov-hypgps} introduced quasiconvex subgroups while developing the theory of discrete hyperbolic groups. The discovery of these subgroups triggered extensive research on the subject (see, for instance, \cite{gitik, kapovich-local-qc, gitik-ping-pong, swenson, GMRS, hruska-wise}). 
Quasiconvex subgroups enjoy various important properties; for example, a quasiconvex subgroup of a hyperbolic group is itself a hyperbolic group, and the intersection of two quasiconvex subgroups is again quasiconvex. It can also be characterized in terms of the behaviour of the boundaries, namely the existence of an injective Cannon-Thurston map (see \cite{mahan-on-a-theorem-of-scott}). In the study of intrinsic properties of hyperbolic groups, among various themes, there has been a wide range of studies revolving around {\em local quasiconvexity}, with fundamental contributions by Gitik \cite{gitik} and I. Kapovich \cite{kapovich-local-qc}.

In this article, we establish a foundational framework for the study of quasiconvex subgroups, local quasiconvexity, and Cannon-Thurston maps for hyperbolic totally disconnected locally compact (TDLC) groups. There has been growing interest in extending the coarse geometric framework of discrete hyperbolic groups to the setting of TDLC groups, which naturally arise as automorphism groups of locally finite graphs and buildings, algebraic groups over non-Archimedean local fields, and other non-discrete contexts, see the work of Kr\"{o}n-M\"{o}ller \cite{kron-moller}, Baumgartner-M\"{o}ller-Willis \cite{baumgartner-moller-willis-flatrank-one}, Carette-Dreesen \cite{mathieu-dennis-locally-compact-convergence}, Caprace-Cornulier-Monod-Tessera \cite{caprace-mood-amenable-hyp}, Arora-Castellano-Cook-Pedroza \cite{arora-castellano-pedroja}, see also \cite{arora-pedroja}, \cite{combination-theorem}, and references there in. Motivated by the work of Gitik \cite{gitik} and Kapovich \cite{kapovich-local-qc}, the main aim of this article is to prove a combination theorem for a finite graph of locally quasiconvex hyperbolic TDLC groups. However, in contrast to their proof, our approach is different, which uses Dahmani's boundary construction and the characterization of quasiconvexity in terms of the existence of an injective Cannon-Thurston map. To this end, we first establish several important properties for quasiconvex subgroups of a hyperbolic TDLC group, including the intersection of quasiconvex subgroups being quasiconvex, the limit set intersection property, and the fact that the height of a quasiconvex subgroup is finite. However, not all properties of quasiconvex subgroups are inherited from the discrete setup to the TDLC setup, see Remark \ref{remark:normalizer-centralizer}.
A first step towards our goal is the requirement of hyperbolicity of the fundamental group of the given graph of hyperbolic TDLC groups. Therefore, we introduced the notion of acylindrical graphs of groups in our setting, and by adapting the machinery on geometrically finite convergence groups developed by Dahmani \cite{dahmani-comb} in our context, we prove a combination theorem for a finite acylindrical graph of hyperbolic TDLC groups. Moreover, we give an explicit construction of the Gromov boundary of the fundamental group of the given graph of groups. This description, along with the characterization of quasiconvexity, allows us to fulfill our goal.
 As an independent interest, we prove that given a short exact sequence of open continuous homomorphisms of compactly generated hyperbolic TDLC groups
$1 \to H \to G \to Q \to 1,$ there exists a Cannon-Thurston map for the pair $(H, G)$.
    
\subsection{Acylindrical graphs of hyperbolic TDLC groups} The notion of acylindrical action originates from the work of Sela \cite{sela-acylindrical}. In the following definition, we adapt this notion in the TDLC setup. Let $(\mathcal G,\mathcal Z)$ be a graph of topological groups with Bass-Serre tree $T$ (see Subsection \ref{subsection:graphs-of-top-groups}). 
\begin{definition}\label{defn:acyl-action}
    For $k\geq 0$, the action of the fundamental group $G$ of $(\mathcal G,\mathcal Z)$ on $T$ is said to be $k$-{\em acylindrical} if the pointwise stabilizer of a geodesic of length greater than $k$ is compact. The action of $G$ on $T$ is said to be {\em acylindrical} if it is $k$-acylindrical for some $k\geq 0.$
\end{definition}
For infinite finitely generated groups, the notion of Cayley graph is a fundamental tool-kit in studying their geometry. In the same spirit, to study large-scale properties of TDLC groups, Kr\"{o}n and M\"{o}ller \cite{kron-moller} introduced the notion of a  Cayley-Abels graph (with the name rough Cayley graph but now commonly known as Cayley-Abels graph) for compactly generated TDLC groups, see Subsection \ref{subsection:2.2} for the definition and further details. Moreover, they showed that any two Cayley-Abels graphs of a compactly generated TDLC group are quasiisometric (Theorem \ref{theorem:existenc-qi-invariance-cayley-abel-graph}). This leads one to define the notion of a hyperbolic TDLC group: a compactly generated TDLC group $G$ is said to be {\em hyperbolic} if some (any) Cayley-Abels graph of $G$ is a Gromov hyperbolic space. Motivated by the celebrated combination theorem of Bestvina-Feighn \cite{BF}, several people proved combination theorems for hyperbolic and relatively hyperbolic groups. We list a few of them \cite{ilyakapovichcomb, dahmani-comb, mj-reeves, mahan-sardar, martin1, tomar, tomar-rel-acyl}. In \cite{ilyakapovichcomb}, I. Kapovich proved a combination theorem for a finite acylindrical graph of hyperbolic groups. Using a completely different technique, Dahmani \cite{dahmani-comb} proved an analogous theorem in the context of relatively hyperbolic groups. We adapt Dahmani's technique in the realm of acylindrical graphs of hyperbolic TDLC groups, and prove the following combination theorem. {\em Throughout the paper, on a finite graph of topological groups, we use the topology as in Proposition \ref{proposition-topology-graphs-of-groups}}.
\begin{theorem}\label{theorem:acyl-combination-theorem}
Let $(\mathcal G,\mathcal Z)$ be a finite graph of groups with fundamental group $G$ such that the following hold:
\begin{enumerate}
    \item The vertex groups are hyperbolic TDLC.
    \item The edge groups are quasiconvex in the adjacent vertex groups.
    \item The action of $G$ on the Bass-Serre tree of $(\mathcal G,\mathcal Z)$ is acylindrical.
\end{enumerate}
    Then, $G$ is a hyperbolic TDLC group, and the vertex groups are quasiconvex in $G$.
\end{theorem}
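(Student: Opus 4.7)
My plan is to follow Dahmani's approach: explicitly build a candidate for the Gromov boundary of $G$ by gluing the Gromov boundaries of the vertex groups along the Bass-Serre tree $T$, and then verify that the natural action of $G$ on this space satisfies the hypotheses of the Carette-Dreesen characterisation of hyperbolic TDLC groups as uniform convergence groups on compact metrisable spaces. Quasiconvexity of the vertex groups in $G$ will then drop out of the construction via the characterisation of quasiconvex subgroups by injective Cannon-Thurston maps that the paper develops earlier.

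\textbf{TDLC structure and candidate boundary.} First I would invoke Proposition \ref{proposition-topology-graphs-of-groups} to endow $G$ with its TDLC topology in which each vertex group embeds as an open subgroup; combined with finiteness of the graph and compact generation of every vertex group, this forces $G$ to be compactly generated, so Cayley-Abels graphs for $G$ exist. I would then define
\[
M \;=\; \partial T \;\sqcup\; \bigsqcup_{v\in V(T)} \partial G_v \;\Big/\sim,
\]
where for each edge $e=[v,w]$ of $T$ the limit set $\partial G_e\subseteq \partial G_v$ (well-defined since $G_e$ is quasiconvex in $G_v$) is identified with its copy in $\partial G_w$. The topology is given by Dahmani's cone-type basis: a neighbourhood of $\xi\in\partial G_v$ consists of a small neighbourhood of $\xi$ in $\partial G_v$ together with all points of $\partial G_w$ and of $\partial T$ lying in subtrees of $T$ rooted at $v$ and pointing toward $\xi$. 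Routine verifications as in Dahmani show that $M$ is compact and metrisable, and that $G$ acts on $M$ by homeomorphisms.

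\textbf{Uniform convergence action.} For a sequence $g_n\to\infty$ in $G$, the convergence property on $M$ splits according to translation behaviour on $T$: either translation lengths remain bounded, in which case one ultimately reduces to the known convergence action on some $\partial G_v$, or translation lengths tend to infinity and the sink/source attracting pair lies on $\partial T$. The main technical step, and the principal obstacle, is the uniformity of the action, i.e.\ cocompactness of $G$ on the space of distinct triples $\Theta(M)$. This is precisely where acylindricity is indispensable: compactness of pointwise stabilisers of long enough geodesic segments of $T$ prevents a triple of boundary points from spanning arbitrarily long segments in more than finitely many orbit types modulo compact subgroups. Combining this bookkeeping with the already-known cocompact action of each vertex group on triples of its own boundary packages $\Theta(M)/G$ into finitely many pieces up to compact stabilisers, which is the TDLC flavour of the uniformity required by Carette-Dreesen. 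Their theorem then yields that $G$ is a hyperbolic TDLC group with $\partial G$ canonically homeomorphic to $M$.

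\textbf{Quasiconvexity of vertex groups.} By construction, for every vertex $v$ of $T$ the natural embedding $\partial G_v \hookrightarrow M = \partial G$ is continuous, injective and $G_v$-equivariant, and extends the inclusion $G_v\hookrightarrow G$ in the boundary sense. It therefore realises the Cannon-Thurston map for the pair $(G_v,G)$ and is injective, so by the characterisation of quasiconvexity in terms of injectivity of Cannon-Thurston maps developed earlier in the article, $G_v$ is quasiconvex in $G$. The hard work is concentrated in Step 3; once uniformity on triples is secured, the remaining verifications (metrisability of $M$, convergence on $\partial T$ and on the individual $\partial G_v$, continuity of the boundary maps) proceed along the same lines as in Dahmani's discrete setting, with compact open stabilisers replacing finite stabilisers throughout.
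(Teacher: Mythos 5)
Your proposal is correct and follows essentially the same route as the paper: Dahmani's gluing of vertex-group boundaries along the Bass--Serre tree, verification of a uniform convergence action (with acylindricity plus finite height of quasiconvex subgroups controlling the domains), the Carette--Dreesen characterisation to conclude hyperbolicity, and injectivity of the induced Cannon--Thurston maps to get quasiconvexity of the vertex groups. The only cosmetic difference is that the paper certifies uniformity by checking that every point of the candidate boundary is conical (Theorem~\ref{theorem:uniform-convergence-charaterisation}) rather than by arguing cocompactness on triples directly.
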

A subgroup $C$ in a topological group $A$ is said be be {\em weakly malnormal} if, for all $a\in A\setminus C$, $a^{-1}Ca\cap C$ is compact. If $G=A\ast_C B$ is an amalgamated free product of topological groups such that $C$ is weakly malnormal in $A$, then the action of $G$ on its Bass-Serre tree is $2$-acylindrical. Thus, as an application of Theorem \ref{theorem:acyl-combination-theorem}, if $A$ and $B$ are hyperbolic TDLC groups and $C$ is quasiconvex in $A$ and $B$ such that $C$ is weakly malnormal in $A$, then $G$ is hyperbolic. For a non-compact weakly malnormal subgroup of a hyperbolic TDLC group, see Example \ref{example:malnormal-example}.

Gitik et al. introduced the notion of the height of a subgroup of a group and proved that quasiconvex subgroups of hyperbolic groups have finite height \cite{GMRS}. Swarup asked the converse, and it remains an open question even when the subgroup is malnormal, that is, a subgroup of height $1$ \cite{bestvinahp}. Mj (formerly Mitra) proved that if a hyperbolic group $G$ splits as an amalgamated free product or HNN-extension of hyperbolic groups, then the edge group has finite height in $G$ if and only if it is quasiconvex in $G$ \cite{mitra-ht}. Recently, the third-named author \cite{tomar-rel-height} generalised it in the setting of relatively hyperbolic groups (see also \cite{pal-height-splitting}). As an application of Theorem \ref{theorem:acyl-combination-theorem}, in the TDLC setting, we prove the following result, generalising \cite[Theorem 4.6]{mitra-ht}. Our proof of Theorem \ref{theorem:height-splitting} gives a new proof even in the discrete setting.

\begin{theorem}\label{theorem:height-splitting}
Suppose $G$ is a hyperbolic TDLC group which splits as $A\ast_C B$ or $A\ast_C$, where $A, B$ are hyperbolic TDLC groups, and $C$ is an open quasiconvex subgroup of $A$ and $B$. Then, $C$ is quasiconvex in $G$ if and only if $C$ has finite height in $G$.
\end{theorem}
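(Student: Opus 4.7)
The plan is to reduce both directions of the equivalence to results already developed in the paper. The forward implication, that a quasiconvex subgroup of a hyperbolic TDLC group has finite height, is precisely one of the foundational properties of quasiconvex subgroups that the introduction advertises as established in the body of the paper (the TDLC analogue of the theorem of \cite{GMRS}); I would simply invoke it.

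For the reverse implication, the key observation is that finite height of the edge group translates directly into acylindricity of the action of $G$ on the Bass-Serre tree $T$ of the splitting. Let $n$ be the height of $C$ in $G$ and let $\gamma = e_1 e_2 \cdots e_k$ be a geodesic in $T$ with $k > n$ edges. By Bass-Serre theory the stabilizer of each edge $e_j$ has the form $g_j C g_j^{-1}$ for some $g_j \in G$, and distinctness of the edges $e_j$ forces the cosets $g_j C$ to be pairwise distinct. The pointwise stabilizer of $\gamma$ is therefore the intersection $\bigcap_{j=1}^{k} g_j C g_j^{-1}$ of more than $n$ essentially distinct conjugates of $C$. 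By the definition of height in the TDLC setting, this intersection is compact, so the action of $G$ on $T$ is $n$-acylindrical.

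All three hypotheses of Theorem \ref{theorem:acyl-combination-theorem} are now in force: the vertex groups are hyperbolic TDLC, the edge group $C$ is quasiconvex in each adjacent vertex group, and the action on $T$ is acylindrical. Its conclusion gives that each vertex group is quasiconvex (and hence hyperbolic) in $G$. Since $C$ is by hypothesis quasiconvex in the hyperbolic TDLC group $A$, and $A$ is quasiconvex in the hyperbolic TDLC group $G$, transitivity of quasiconvexity for quasiconvex subgroups of hyperbolic TDLC groups (another foundational property established earlier in the paper) yields that $C$ is quasiconvex in $G$. The HNN case $A\ast_C$ is treated identically, working with the Bass-Serre tree carrying a single edge orbit.

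The expected main obstacle is the TDLC bookkeeping around height: one must use the compactness version of ``large intersections of essentially distinct conjugates'' rather than the infiniteness version from the discrete setting, and one must verify that the Bass-Serre stabilizer computation, performed with the topology on the graph of topological groups specified in Proposition \ref{proposition-topology-graphs-of-groups}, genuinely produces cosets $g_j C$ that are pairwise inequivalent in $G/C$. These are routine but nontrivial checks, and they are the only places where the argument could silently fail.
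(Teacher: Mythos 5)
Your proposal is correct and follows essentially the same route as the paper: the forward direction is Proposition \ref{prop:qc-implies-finite-height}, and the reverse direction converts finite height of $C$ into acylindricity of the action on the Bass-Serre tree (this is exactly the content of Lemma \ref{finite height implies acyl}, which you reprove directly for a single edge orbit), then applies Theorem \ref{theorem:acyl-combination-theorem} to get the vertex groups quasiconvex in $G$ and concludes by composing quasiconvex inclusions $C\le A\le G$. The only cosmetic difference is that the paper isolates the height-implies-acylindricity step as a separate lemma for general finite graphs of groups and argues it by contradiction, whereas you give the direct count of essentially distinct edge stabilizers along a long geodesic.
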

One is referred to Definition \ref{defn:height} for the height of a subgroup of a TDLC group.
\subsection{Combination of locally quasiconvex hyperbolic TDLC groups}
A finitely generated group is said to be {\em locally quasiconvex} if every finitely generated subgroup is quasiconvex. An important feature of locally quasiconvex hyperbolic groups is that they satisfy the {\em Howson property} (the intersection of two finitely generated subgroups is again finitely generated). It remains an open question whether a hyperbolic group satisfying the Howson property is locally quasiconvex. For discrete hyperbolic groups, Gitik \cite{gitik} and Kapovich \cite{kapovich-local-qc} proved combination theorems for locally quasiconvex groups. Recently, the third-named author proved a combination theorem for certain complexes of locally quasiconvex hyperbolic groups \cite{tomar-remark-finite-edge-groups}. We say that a hyperbolic TDLC group $G$ is {\em locally quasiconvex} if every open compactly generated subgroup of $G$ is quasiconvex. Since, by Lemma \ref{lemma:properties-of-qc}(3), open quasiconvex subgroups of a hyperbolic TDLC group are hyperbolic, locally quasiconvex hyperbolic groups are coherent, i.e. open compactly generated subgroups are compactly presented, see the work of Arora-Pedroza \cite{arora-pedroja} for more on coherent topological groups. Using the description of the Gromov boundary given in the proof of Theorem \ref{theorem:acyl-combination-theorem}, under the weaker assumptions than the Howson property, we prove the following combination theorem for locally quasiconvex hyperbolic TDLC groups.

\begin{theorem}\label{theorem:combination-of-locally-qc} Let $(\mathcal G,\mathcal Z)$ be a finite graph of topological groups with fundamental group $G$ such that the following hold:
\begin{enumerate}
    \item The vertex groups are locally quasiconvex hyperbolic TDLC groups.
    \item The edge groups are quasiconvex in the adjacent vertex groups.
    \item The action of $G$ on the Bass-Serre tree of $(\mathcal G,\mathcal Z)$ is acylindrical.
    \item The intersection of a compactly generated subgroup of $G$ with the conjugates of the edge groups in $G$ is compactly generated.
\end{enumerate}
      Then, $G$ is a locally quasiconvex hyperbolic TDLC group. Conversely, if $(\mathcal G,\mathcal Z)$ satisfies (1), (2), and $G$ is locally quasiconvex, then the vertex groups of $(\mathcal G,\mathcal Z)$ are locally quasiconvex in $G$. 
\end{theorem}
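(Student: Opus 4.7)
The plan is to apply Theorem~\ref{theorem:acyl-combination-theorem} twice: first to $(\mathcal G, \mathcal Z)$ itself, yielding that $G$ is hyperbolic TDLC with quasiconvex vertex groups, and then to an induced splitting of an arbitrary open compactly generated subgroup $H \le G$. The converse direction is immediate: if $K$ is an open compactly generated subgroup of a vertex group $G_v$, then since $G_v$ is open in $G$ under the topology on the fundamental group (Proposition~\ref{proposition-topology-graphs-of-groups}), $K$ is an open compactly generated subgroup of $G$, and local quasiconvexity of $G$ forces $K$ to be quasiconvex in $G$.

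For the forward direction, fix an open compactly generated $H \le G$, and restrict the $G$-action on the Bass-Serre tree $T$ to $H$. Let $T_H$ be a minimal $H$-invariant subtree. I plan to produce an induced finite graph of groups $\mathcal H$ with $\pi_1(\mathcal H) = H$, vertex groups $H_v = H \cap gG_vg^{-1}$, and edge groups $H_e = H \cap gG_eg^{-1}$. Hypothesis~(4) ensures that each $H_e$ is compactly generated; combined with finiteness of $H \backslash T_H$ and standard Bass-Serre theory (a compactly generated group acting on a tree with finite quotient and compactly generated edge stabilizers has compactly generated vertex stabilizers), this yields compact generation of each $H_v$. Local quasiconvexity of $gG_vg^{-1}$ (hypothesis~(1)) then gives that $H_v$ is quasiconvex in $gG_vg^{-1}$, hence hyperbolic TDLC. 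The edge group $H_e$ is the intersection of the two quasiconvex subgroups $H_v$ and $gG_eg^{-1}$ of $gG_vg^{-1}$ (the latter by hypothesis~(2)), so by the intersection property for quasiconvex subgroups of hyperbolic TDLC groups established earlier in the paper, $H_e$ is quasiconvex in $H_v$. Acylindricity of the $H$-action on $T_H$ is inherited from hypothesis~(3). Applying Theorem~\ref{theorem:acyl-combination-theorem} to $\mathcal H$ then shows that $H$ is hyperbolic TDLC.

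To promote hyperbolicity of $H$ to quasiconvexity of $H$ in $G$, I would invoke the explicit boundary construction from the proof of Theorem~\ref{theorem:acyl-combination-theorem}: the same construction, applied to the inclusion $\mathcal H \hookrightarrow \mathcal G$, realises $\partial H$ as the limit set of $H$ in $\partial G$ and yields an injective Cannon-Thurston map $\partial H \hookrightarrow \partial G$. By the characterization of quasiconvexity via injective Cannon-Thurston maps for hyperbolic TDLC groups (the analogue of \cite{mahan-on-a-theorem-of-scott} mentioned in the introduction), this forces $H$ to be quasiconvex in $G$.

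The principal obstacle is the first step of the forward direction, namely the finiteness of $H \backslash T_H$. This is the TDLC analogue of Sela's acylindrical accessibility; the acylindricity hypothesis~(3), together with compact generation of $H$ and the intersection hypothesis~(4) on edge groups, is designed to supply precisely this control. Once finiteness of the quotient is secured, compact generation of the vertex stabilizers $H_v$ and the remaining boundary argument assemble as outlined above.
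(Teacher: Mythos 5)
Your forward direction follows essentially the same route as the paper: induce a finite splitting of an open compactly generated $H\le G$ from its action on the Bass--Serre tree, use hypothesis (4) together with the Bass--Serre fact you quote to get compact generation of the induced vertex groups, use local quasiconvexity of the vertex groups of $(\mathcal G,\mathcal Z)$ and the intersection property to verify the hypotheses of Theorem~\ref{theorem:acyl-combination-theorem} for the induced graph of groups, and then identify the constructed boundary of $H$ with the limit set of $H$ in the boundary of $G$ to obtain an injective Cannon--Thurston map and conclude via Proposition~\ref{prop:CT-injective-iff-qc}. Two caveats. First, the step you single out as the ``principal obstacle'' --- finiteness of $H\backslash T_H$ --- is not where acylindricity enters and is not an accessibility statement: it follows from compact generation of $H$ alone. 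The paper avoids the minimal subtree entirely: choose a compact open subgroup $U\le H\cap C$ stabilizing an edge $e$, a finite set $S$ with $S\cup U$ generating $H$, let $T'$ be the finite subtree spanned by $\{se:s\in S\}$, and set $T_H=\bigcup_{h\in H}hT'$; this is $H$-invariant and cocompact by construction, with open edge stabilizers. Acylindricity is needed only afterwards, to apply Theorem~\ref{theorem:acyl-combination-theorem} to the induced splitting. As written, your proposal leaves this step unresolved and attributes it to the wrong hypothesis. Second, the limit-set identification at the end is the technical heart of the argument and also needs proof: the paper shows that the candidate boundary $X_1$ of $H$ is a closed $H$-invariant subset of $X$ by exhibiting, for each point of the complement, a basic neighborhood missing $X_1$ (using Lemma~\ref{lemma:avoiding an edge}(1) and Lemma~\ref{prop:dahmnai-1.8-prop}); you invoke the conclusion without supplying this.

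The converse is where your proposal genuinely loses content. The paper's converse asserts, and proves, that every open compactly generated subgroup $K$ of a vertex group $G_v$ is quasiconvex \emph{in $G_v$}, not merely in $G$. Your observation that $K$ is open and compactly generated in $G$, hence quasiconvex in $G$ by local quasiconvexity of $G$, is the trivial half. The substance is descending from ``qi embedded in $\Gamma_G$'' to ``qi embedded in $\Gamma_{G_v}$'': the paper factors the embedding as $\Gamma_K\to\Gamma_{G_v}\to\Gamma_G$, notes that the second map is a proper embedding, and deduces that if the composite is a qi embedding then so is the first, whence $K$ is quasiconvex in $G_v$ by Lemma~\ref{lemma:properties-of-qc}(2). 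You should add this step.
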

 When the edge groups in Theorem \ref{theorem:combination-of-locally-qc} are compact, conditions (2) and (3) are automatically satisfied, and hence $G$ is locally quasiconvex if and only if the vertex groups are locally quasiconvex (see Corollary \ref{corollary:locally-qc-compact-edge-groups}). Moreover, we prove that TDLC groups quasiisometric to locally finite trees are locally quasiconvex hyperbolic (Corollary \ref{corollary:quasiisometric-to-tree-locally-qc}). This, in particular, implies that $\rm{SL}(2,\mathbb{Q}_p)$ is locally quasiconvex, see Example \ref{example-local-qc}. A result related to a combination of quasiconvex subgroups is also discussed in Section \ref{section:combination-of-hyperbolic-TDLC-groups} (Theorem \ref{theorem:gitik-qc-combination}).    
\subsection{Cannon-Thurston maps for extensions of hyperbolic TDLC groups}
    We recall that the notion of Cannon-Thurston (CT) maps was inspired by the work of Cannon and Thurston (\cite{CTpub}) on hyperbolic $3$-manifolds. It was introduced in geometric group theory by Mahan Mitra (Mj) in \cite{mitra-trees} and \cite{mitra-ct}.
		
{\em Given a map $f:Y\to X$ between two (Gromov) hyperbolic spaces, the CT map for $f$ is a continuous extension of $f$ to the Gromov boundaries $\partial f: \partial Y\to \partial X$.}
		
We refer to Subsection \ref{subsection:CT maps} for a somewhat detailed discussion. In particular, if $H<G$ are hyperbolic groups, one asks if the inclusion $H\to G$ admits the CT map $\partial H\to \partial G$. In this case, we also say that the pair $(H,G)$ admits a CT map. This question of Mj appeared in \cite[Question 1.19]{bestvinahp} and was subsequently answered negatively by Baker and Riley \cite{baker-riley}. The CT map in geometric group theory became popular mostly through the work of Mj. We refer to \cite{mahan-icm} for a detailed history of the development of CT maps. In \cite{mitra-ct}, Mj proved that given a short exact sequence of finitely generated groups $1 \to H \to G \to Q \to 1$ with $G, H$ hyperbolic, there exists a Cannon-Thurston map for the pair $(H,G)$. We aim to prove an analog of this theorem in the TDLC setting. To prove his theorem, Mj used the technology of quasiisometric section (see Section \ref{section:existence-of-qisection}) for such a short exact sequence introduced by Mosher \cite{mosher-hypextns}. In fact, using a quasiisometric section, he constructed a quasiconvex subset of a Cayley graph of $G$, called {\em Ladder}, to be able to check the sufficient condition for the existence of the CT map (Lemma \ref{lemma:mitra-criterion}). We adapt the machinery developed by Mosher \cite{mosher-hypextns} and Mj \cite{mitra-ct} in the context of hyperbolic TDLC groups and prove the following. An example related to this theorem is discussed in Example \ref{example:ct-map-example}.

\begin{theorem}\label{main theorem}
     Given a short exact sequence of open continuous maps of compactly generated TDLC groups $1 \to H \to G \to Q \to 1$ such that $G$ and $H$ are hyperbolic, 
     and $H$ is non-elementary, there exists a Cannon-Thurston map for the pair $(H,G)$.
   \end{theorem}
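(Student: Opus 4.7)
The plan is to adapt the Mosher--Mj framework to the TDLC setting. The two main ingredients have already been set up earlier in the paper: Section \ref{section:existence-of-qisection} provides, for the short exact sequence $1 \to H \to G \to Q \to 1$, a quasiisometric section $s: Q \to G$ (the TDLC analog of Mosher \cite{mosher-hypextns}), and Lemma \ref{lemma:mitra-criterion} reduces the existence of a CT map for $(H,G)$ to a purely metric criterion on how $\Gamma_H$-geodesics sit inside $\Gamma_G$. What remains is to import the Mj \emph{ladder} construction \cite{mitra-ct} and verify the metric criterion using it.

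Fix Cayley--Abels graphs $\Gamma_G, \Gamma_H$ for $G, H$ with compatibly chosen compact generating sets. For any $\Gamma_H$-geodesic $\gamma$ between points $x, y \in H$, I define the ladder
\[
L_\gamma \;=\; \bigcup_{q \in Q}\, s(q)\cdot \gamma \;\subset\; \Gamma_G .
\]
The first and most substantive step is to show that $L_\gamma$ is uniformly quasiconvex in $\Gamma_G$, with quasiconvexity constant depending only on $G, H, Q$ and the fixed data (and not on $\gamma$). I would prove this by constructing a coarse Lipschitz retraction $\Pi_\gamma:\Gamma_G \to L_\gamma$ as follows. Any vertex $g \in \Gamma_G$ lies in a unique fiber $H\cdot s(q)$; since $H$ is normal in $G$, conjugation by $s(q)$ is a quasiisometry $c_{s(q)}:\Gamma_H\to \Gamma_H$ whose QI-constants are bounded uniformly in $q$ (a consequence of the qi-section construction together with compact generation and openness of $H$). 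Transporting the coarse Lipschitz nearest-point projection $\Gamma_H \to \gamma$ (available by hyperbolicity of $\Gamma_H$) through $c_{s(q)}$ gives a retraction onto $s(q)\cdot\gamma$; packaging these together defines $\Pi_\gamma$. The existence of a coarse Lipschitz retraction onto $L_\gamma$ then yields its quasiconvexity by a standard retraction argument in the hyperbolic graph $\Gamma_G$.

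Having established the uniform quasiconvexity of the ladder, composing $\Pi_\gamma$ with the natural retraction $L_\gamma \to \gamma$ (which is coarse Lipschitz by the ladder construction, as points over distant $q,q'\in Q$ must traverse a long path in $\Gamma_G$ to reach $\gamma$) gives a coarse Lipschitz retraction $\Gamma_G \to \gamma$ with constants independent of $\gamma$. Mitra's criterion (Lemma \ref{lemma:mitra-criterion}) can then be verified in the usual way: given $M>0$, choose $N$ large enough that any $\Gamma_H$-geodesic $\gamma$ at $\Gamma_H$-distance $\geq N$ from $1$ projects, under this retraction, to a point at $\Gamma_G$-distance greater than a prescribed threshold from $1$; the coarse Lipschitz control then forces any $\Gamma_G$-geodesic $\alpha$ with endpoints on $\gamma$ to lie outside $B_{\Gamma_G}(1, M)$. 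The non-elementarity of $H$ ensures that $\partial H$ is non-trivial (so that the statement of the CT map is non-vacuous) and supplies the two independent loxodromic elements used in the slimness step.

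The main obstacle is unquestionably the uniform quasiconvexity of $L_\gamma$. In the discrete setting one argues using explicit conjugation of Cayley-graph generators; in the TDLC setting, conjugation by $s(q)$ is well-defined only modulo the compact open vertex stabilizers, and the ladder is a gluing of fibers along the quasi-tree image of $s$. Proving that $\Pi_\gamma$ is coarsely Lipschitz with constants depending only on $G, H, Q$ and the fixed data, rather than on $\gamma$, will require the full strength of the qi-section machinery of Section \ref{section:existence-of-qisection} together with uniform conjugation estimates on $\Gamma_H$, which in turn rest on $H$ being a compactly generated, open, normal subgroup of $G$.
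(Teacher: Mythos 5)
Your overall strategy (qi section, ladder, coarse Lipschitz retraction onto the ladder, Mitra's criterion) is the same as the paper's, but two steps as you have written them would fail. First, the rungs of the ladder cannot be the left translates $s(q)\cdot\gamma$: if $\gamma$ has endpoints $aU_H,bU_H$, the endpoints of $s(q)\iota(\gamma)$ and $s(q')\iota(\gamma)$ for adjacent $q,q'$ are at distance $\approx |a^{-1}s(q)^{-1}s(q')a|$, which grows with $|a|$, so adjacent rungs are not uniformly Hausdorff close and the union is not uniformly quasiconvex. The correct rung over $q$ is $t_{s(q)}\iota(\lambda_{s(q)})$, a geodesic in the fiber joining $a\,s(q)U$ to $b\,s(q)U$ (equivalently, the geodesic between the images of the endpoints under the conjugation quasiisometry $\phi_{s(q)}$, pushed into the coset); then adjacent rungs have endpoints at distance $\le k_1$ by left invariance, which is exactly what makes the fiberwise nearest-point projections assemble into a retraction $\Pi_\lambda$ with uniform constants. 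Your own description of $\Pi_\gamma$ (transporting the projection through $c_{s(q)}$) actually lands on this conjugated copy, not on $s(q)\cdot\gamma$, so the definition and the retraction are inconsistent as stated.

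The more serious problem is the endgame. There is no coarse Lipschitz retraction $L_\gamma\to\gamma$, hence no coarse Lipschitz retraction $\Gamma_G\to\gamma$ with constants independent of $\gamma$: collapsing the rung over $q$ down to $\gamma$ is a composition of $d_Q(1,q)$ conjugation quasiisometries, so it distorts distances by a factor on the order of $K^{d_Q(1,q)}$, and the parenthetical justification you give addresses only points in different fibers, not nearby points in the same fiber. Indeed, if such a uniform retraction onto every $\Gamma_H$-geodesic existed, every $\iota(\gamma)$ would be uniformly quasiconvex in $\Gamma_G$ and $H$ would be quasiconvex in $G$ --- which is exactly what fails for infinite-index normal subgroups and is the reason a Cannon--Thurston argument is needed at all. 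The correct conclusion of the argument does not retract onto $\gamma$: one uses quasiconvexity of the ladder (Lemma \ref{lemma:ladder-qc}) to place any $\Gamma_G$-geodesic joining the endpoints of $\iota(\lambda)$ in a $C'$-neighborhood of $B_\lambda$, and then the vertical/horizontal trade-off of Lemma \ref{lemma6}: a point $xU$ of $B_\lambda$ in the fiber over $q$ satisfies both $d_A(U,xU)\ge f(N)-R\,d_{p(A)}(p(U),p(x)p(U))$ (since it is within $R\,d_{p(A)}(p(U),p(x)p(U))$ of $\iota(\lambda)$, which lies outside the $f(N)$-ball) and $d_A(U,xU)\ge d_{p(A)}(p(U),p(x)p(U))$, whence $d_A(U,xU)\ge f(N)/(R+1)$, verifying Mitra's criterion (Lemma \ref{lemma:mitra-criterion}). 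A minor further point: non-elementarity of $H$ is not used for ``loxodromic elements in a slimness step''; it is needed in Proposition \ref{lemma4} so that the space of distinct triples of $\partial\Gamma_H$ is nonempty and admits a compact fundamental domain, which is what produces the qi section in the first place.
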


The paper is organized as follows. In Section \ref{section:1}, we fix notations, recall key definitions, outline the construction of the Bass-Serre trees of graphs of topological groups, and provide brief introductions to Cayley-Abels graphs, convergence TDLC groups, and Cannon-Thurston maps. In the same section, we make several observations and prove some results that are important to us, like Lemma \ref{lemma:cosets of cpt set bounded}, Lemma \ref{lemma:conjugation-qi}, and Proposition \ref{prop:uniform-convergence-action}. Section \ref{section:quasiconvexity} establishes relevant properties of quasiconvex subgroups of hyperbolic TDLC groups--on their limit sets (Proposition \ref{prop:limit-set-intersection-property}), height (Proposition \ref{prop:qc-implies-finite-height}), and a characterization relating quasiconvexity and Cannon-Thurston maps (Proposition \ref{prop:CT-injective-iff-qc}). In Section \ref{section:construction-of-Gromov-boundary}, we prove Theorem \ref{theorem:acyl-combination-theorem}, a combination theorem for acylindrical graphs of hyperbolic TDLC groups with an explicit boundary construction, and apply it to characterize quasiconvex subgroups in the splitting groups in terms of height (Theorem \ref{theorem:height-splitting}). Section \ref{section:combination-of-hyperbolic-TDLC-groups} contains our main result on locally quasiconvex hyperbolic TDLC groups (Theorem \ref{theorem:combination-of-locally-qc}), related corollaries, and a combination theorem for subgroups generated by two quasiconvex subgroups of a hyperbolic TDLC group. As preparation for the Cannon-Thurston map result in Section \ref{section:4}, Section \ref{section:existence-of-qisection} introduces quasiisometric sections and proves their existence for short exact sequences of compactly generated hyperbolic TDLC groups, and Section 7 concludes by establishing the existence of a Cannon–Thurston map.
\section{Background}\label{section:1}
In this section, we recall some basic definitions and results that we will use in the paper. Throughout the paper, we have the following standard assumptions:
\begin{enumerate}
    \item All topological groups are second countable Hausdorff.
    \item All actions of a group on a topological (metric) space are by homeomorphisms (isometries).
    \item All maps between topological groups are continuous.
\end{enumerate}
\begin{definition}
    The action of a topological group $G$ on a topological space $X$ is said to be {\em proper} if, for every compact set $K\subset X$, the set $\{g \in G: gK \cap K \neq \phi\}$ is relatively compact, i.e. its closure is compact.
\end{definition}

\subsection{Coarse geometry}
 In this subsection, we recall some standard notions of coarse geometry and set up some notation and conventions. Let $X$ be a metric space. For all $x,y\in X$, their distance in $X$ is denoted by $d_X(x,y)$ or
simply $d(x,y)$ when $X$ is understood. For $x\in X$, we denote by $B_r(x)$ the closed ball of radius $r$ with center $x$. 
For any $A\subset X$ and $D\geq 0$ we denote the closed $D$-neighborhood, i.e. 
$\{x\in X: d(x,a)\leq D \mbox{ for some }\, a\in A\}$ by $N_D(A)$. For $A, B\subset X$ we shall denote by $d_X(A,B)$ the
quantity $\inf\{d_X(a,b):a\in A, b\in B\}$. The {\em Hausdorff distance} between $A,B$ in $X$ is defined to be
$$d^{Haus}(A,B):= \inf\{D\geq 0: A\subseteq N_D(B), B\subseteq N_D(A)\}.$$

Suppose $x,y \in X$. A \emph{geodesic (segment)} joining $x$ to $y$ is an 
isometric embedding $\alpha: [a,b] \to X$ where $[a,b]\subset \mathbb R$ is an interval such that $\alpha(a)=x,\alpha(b)=y$. 
If any two points of $X$ can be joined by a geodesic segment
then $X$ is said to be a {\em geodesic metric space}. 
In this paper, graphs are assumed to be connected
and it is assumed that each edge is assigned a unit length so that the graphs are naturally geodesic metric spaces 
(see \cite[Section 1.9, I.1]{bridson-haefliger}). For a graph $\Gamma$, we denote the set of vertices and edges of $\Gamma$ by $V(\Gamma)$ and $E(\Gamma)$, respectively. The following are some other definitions that are important for us. Suppose that $X$ and $Y$ are metric spaces.

\begin{enumerate}
	
	\item	Let $\eta:[0,\infty)\to [0,\infty)$ be any map. A map $f:X\rightarrow Y$ is said to be a	$\eta$-\emph{proper embedding} if $d_Y(f(x),f(x'))\leq M$ implies $d_X(x,x')\leq \eta(M)$ for all $x,x'\in X$. A map $f:X\to Y$ is called a proper embedding if it is a $\eta$-proper embedding for some map $\eta:[0,\infty)\to [0,\infty)$.

	
	\item  Given $\lambda\geq 1,\epsilon\geq 0$, a map 
	$f:X\rightarrow Y$ is said to be a \emph{$(\lambda,\epsilon)$-quasiisometric (qi) embedding} if for all $x,x'\in X$ we have,
	$$\dfrac{1}{\lambda}d_X(x,x')-\epsilon\leq d_Y(f(x),f(x'))\leq \lambda d_X(x,x')+\epsilon.$$
	A $(\lambda,\lambda)$-qi embedding is simply called a $\lambda$-qi embedding.
	
	The map $f$ is said to be $(\lambda,\epsilon)$-\emph{quasiisometry} if $f$ is a $(\lambda,\epsilon)$-quasiisometric embedding
	and moreover, $N_D(f(X))=Y$ for some $D\geq 0$.  
	
	\item  A $(\lambda,\epsilon)$-\emph{quasigeodesic} in a metric space $X$ is a $(\lambda,\epsilon)$-quasiisometric embedding from an interval $I\subset \mathbb{R}$ in $X$. A (quasi)geodesic $\alpha:I\to X$ is called a {\em (quasi)geodesic ray} if $I=[0,\infty)$ and it is called a {\em (quasi)geodesic} 
	line if $I=\mathbb R$. 
	
\end{enumerate}
\subsection{Graphs of topological groups}\label{subsection:graphs-of-top-groups}
In geometric group theory, the notion of graphs of groups is classical. One is referred to Serre's book on trees \cite{serre-trees} for a detailed account of definitions and results.
\begin{definition}\label{definition-graphs-of-groups}
Let $\mathcal Z$ be a graph. A {\em graph of topological groups} $(\mathcal G,\mathcal Z)$ over $\mathcal Z$ consists of the following data:
\begin{enumerate}
    \item[{$(1)$}] For each $v\in V(\mathcal Z)$, there is a topological group $G_v$ called the {\em vertex group}.
    \item[{$(2)$}] For each $e\in E(\mathcal Z)$, there is a topological group $G_e$ called the {\em edge group.}
    \item[{$(3)$}] For each edge $e\in E(\mathcal Z)$ with vertices $v$ and $w$, there are open topological embeddings $G_e\to G_v$ and $G_e\to G_w.$
\end{enumerate}
\end{definition}
We denote the fundamental group of the graph of groups $(\mathcal G,\mathcal Z)$ by $\pi_1(\mathcal G,\mathcal Z)$.
The next proposition shows that there is a canonical topology on $\pi_1(\mathcal G,\mathcal Z).$
\begin{proposition}\textup{\cite[Proposition 8.B.9 and Proposition 8.B.10]{cornulier-harpe-book-metric-geometry}}\label{proposition-topology-graphs-of-groups}
There is a unique topology on $\pi_1(\mathcal G,\mathcal Z)$ such that the inclusion map $G_v\to \pi_1(\mathcal G,\mathcal Z)$ is an open topological embedding. Moreover, we have the following:
\begin{enumerate}
\item[{$(1)$}] If the edge groups of $(\mathcal G,\mathcal Z)$ are locally compact, then $\pi_1(\mathcal G,\mathcal Z)$ is locally compact.

\item[{$(2)$}] If the vertex groups of $(\mathcal G,\mathcal Z)$ are compactly generated, then $\pi_1(\mathcal G,\mathcal Z)$ is compactly generated.
\end{enumerate}
\end{proposition}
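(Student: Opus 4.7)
The plan is to construct the topology via a standard Bass--Serre presentation of $\pi_1(\mathcal{G},\mathcal{Z})$ and declare the topology by transferring a neighborhood basis from a single vertex group. First, for uniqueness, I would observe that any two topologies $\tau_1,\tau_2$ making every inclusion $G_v\hookrightarrow \pi_1(\mathcal{G},\mathcal{Z})$ an open topological embedding must agree on $G_{v_0}$ for any chosen base vertex $v_0$. Since a topological group is determined by a neighborhood basis at the identity, and a neighborhood basis of $1$ in $G_{v_0}$ serves as a neighborhood basis of $1$ in $\pi_1(\mathcal{G},\mathcal{Z})$ for either topology, uniqueness is immediate.

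For existence, fix a base vertex $v_0$ and let $\mathcal{B}_0$ denote a neighborhood basis of $1$ in $G_{v_0}$. I would declare $\mathcal{B}_0$ to be a neighborhood basis of $1$ in $\pi_1(\mathcal{G},\mathcal{Z})$ and extend by left translations. To verify this defines a group topology, one checks the classical axioms for a neighborhood filter at the identity: (i) for every $U\in\mathcal{B}_0$ there exists $V\in\mathcal{B}_0$ with $V\cdot V^{-1}\subseteq U$, which holds inside $G_{v_0}$; and (ii) for every $g\in\pi_1(\mathcal{G},\mathcal{Z})$ and every $U\in\mathcal{B}_0$, there exists $V\in\mathcal{B}_0$ with $g V g^{-1}\subseteq U$. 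Since every element of $\pi_1(\mathcal{G},\mathcal{Z})$ is a product of vertex elements and edge (stable letter) elements, by induction it suffices to verify (ii) when $g$ lies in a vertex group or is an edge element. The first case is internal to $G_v$. The second case is the crux: for an edge $e=(v,w)$, the edge group $G_e$ is open in both $G_v$ and $G_w$, so the two inclusions $G_e\hookrightarrow G_v$ and $G_e\hookrightarrow G_w$ are open topological embeddings; shrinking $V$ to lie in the relevant copy of $G_e$ and conjugating across the edge preserves the filter of identity neighborhoods by the continuity of these inclusions.

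For assertion (1), if each edge group is locally compact, then each vertex group $G_v$ is locally compact at the identity (since it contains an edge group as an open subgroup, and $\mathcal{Z}$ can be assumed connected so local compactness propagates between all vertex groups along edges). Since $G_{v_0}$ is an open subgroup of $\pi_1(\mathcal{G},\mathcal{Z})$, local compactness passes from $G_{v_0}$ to the full group. For assertion (2), if the vertex groups are compactly generated, choose a compact generating set $K_v$ in each of the finitely many $G_v$, and adjoin a finite set of edge/stable-letter elements corresponding to generators of the topological fundamental groupoid of the underlying finite graph $\mathcal{Z}$. By the defining Bass--Serre presentation, the resulting finite union of compacts generates $\pi_1(\mathcal{G},\mathcal{Z})$.

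The main obstacle is verifying axiom (ii) rigorously; that is, showing that the proposed neighborhood basis really is stable under conjugation by every element of $\pi_1(\mathcal{G},\mathcal{Z})$. This requires carefully tracking how the edge subgroups sit as open subgroups of adjacent vertex groups under both embeddings, so that the gluing isomorphisms are homeomorphisms onto their open images; once this compatibility is established, stability under conjugation by vertex and edge letters propagates to arbitrary reduced words by induction on word length. With this in hand, the remaining assertions are routine consequences of the fact that $G_{v_0}$ is an open subgroup of $\pi_1(\mathcal{G},\mathcal{Z})$.
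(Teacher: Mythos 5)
The paper offers no proof of this proposition—it is quoted from Cornulier--de la Harpe—and your construction is essentially the standard argument from that source: transport a neighbourhood basis of the identity from a base vertex group, verify the group-topology axioms (the only nontrivial one being stability of the identity filter under conjugation), and reduce that verification to generators by induction on word length. Your sketch is correct, but one step is stated imprecisely: for $g$ in a vertex group $G_v$ with $v\neq v_0$, the verification of axiom (ii) is \emph{not} internal to $G_v$, since a neighbourhood $V\subseteq G_{v_0}$ need not even meet $G_v$ in a neighbourhood of $1$ a priori. One must first shrink into $W:=G_{e_1}\cap\dots\cap G_{e_n}$, the intersection of the edge groups along the spanning-tree path from $v_0$ to $v$; this $W$ is open in every vertex group along the path precisely because each $G_{e_i}$ is open in both of its endpoint groups with matching topology. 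Then $U\cap W$ is a neighbourhood of $1$ in $G_v$, continuity of conjugation inside $G_v$ produces $V'$ with $gV'g^{-1}\subseteq U\cap W$, and $V'\cap W$ contains an element of $\mathcal B_0$. The same routing handles the stable letters, as you indicate, using that the two edge inclusions are homeomorphisms onto open subgroups. With that made explicit, the uniqueness argument, the induction, and assertions (1) and (2) (the latter using finiteness of $\mathcal Z$) go through as you describe.
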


In particular, if $\mathcal Z$ is an interval, $\pi_1(\mathcal G,\mathcal Z)$ is the amalgamated free product. If $\mathcal Z$ is a simple loop, then $\pi_1(\mathcal G,\mathcal Z)$ is an HNN extension. We now explain in detail these constructions.

Suppose $A,B$, and $C$ are topological groups such that $i_A:C\to A$ and $i_B:C\to B$ are topological isomorphisms onto open subgroups of $A$ and $B$, respectively. Then, we have an {\em amalgamated free product} $A\ast_C B$ given by the presentation $\langle S_A,S_B|R_A,R_B, i_A(c)=i_B(c) \text{ for all } c\in C \rangle$ if $\langle S_A|R_A\rangle$ and  $\langle S_B|R_B\rangle$ are presentations of $A$ and $B$, respectively. {\em Throughout the paper, we assume that the amalgamated free products are non-trivial, i.e. the edge group is not equal to both vertex groups}. If $C$ is a subgroup of $A$ and $\alpha$ is a topological isomorphism from $C$ to an open subgroup of $A$, then we have {\em HNN extension} $A\ast_C$ whose presentation is given by $\langle S_A,t|R_A, t^{-1}ct=\alpha(c) \text{ for all } c\in C \rangle$ if  $\langle S_A|R_A\rangle$ is a presentation of $A$. Let $G$ denote an amalgamated free product $A\ast_C B$ or an HNN extension $A\ast_C$. Then, by Proposition \ref{proposition-topology-graphs-of-groups}, there is a unique topology on $G$ such that the inclusions $A\to G, B\to G$, and $C\to G$ are topological isomorphisms onto open subgroups of $G$. {\em Throughout the paper, we use this topology on amalgamated free products and HNN extensions of topological groups.} We record the following lemma, which is relevant to us.

\begin{lemma}\cite[Lemma 2.13]{combination-theorem}\label{lemma:edge TDLC implies G is TDLC}
    If the edge groups of $(\mathcal G,\mathcal Z)$ are TDLC, then $\pi_1(\mathcal G,\mathcal Z)$ is TDLC.
\end{lemma}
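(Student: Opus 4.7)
The plan is to combine Proposition \ref{proposition-topology-graphs-of-groups}(1) with van Dantzig's theorem, exploiting the fact that in the canonical topology on $\pi_1(\mathcal G,\mathcal Z)$ the edge groups embed as \emph{open} subgroups. Recall that a locally compact group is totally disconnected if and only if its identity component is trivial, and that van Dantzig's theorem furnishes every TDLC group with a compact open subgroup. Since a TDLC edge group is in particular locally compact, Proposition \ref{proposition-topology-graphs-of-groups}(1) already delivers local compactness of $\pi_1(\mathcal G,\mathcal Z)$, so the task reduces to showing that the identity component of $\pi_1(\mathcal G,\mathcal Z)$ is trivial.

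First I would fix any edge $e$ of $\mathcal Z$ with endpoint $v$ and apply van Dantzig's theorem to the TDLC group $G_e$ to extract a compact open subgroup $U \leq G_e$. The composition of open topological embeddings $G_e \hookrightarrow G_v \hookrightarrow \pi_1(\mathcal G,\mathcal Z)$ furnished by Definition \ref{definition-graphs-of-groups} and Proposition \ref{proposition-topology-graphs-of-groups} realizes $U$ as an open subgroup of $\pi_1(\mathcal G,\mathcal Z)$; being compact it is also closed in $\pi_1(\mathcal G,\mathcal Z)$, and hence constitutes a clopen neighborhood of the identity. The identity component of any topological group is contained in every clopen neighborhood of the identity, so the identity component of $\pi_1(\mathcal G,\mathcal Z)$ lies inside $U$. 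But $U$, being a subspace of the totally disconnected group $G_e$, is itself totally disconnected, which forces the identity component of $\pi_1(\mathcal G,\mathcal Z)$ to be trivial; together with local compactness this yields that $\pi_1(\mathcal G,\mathcal Z)$ is TDLC.

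There is no serious obstacle here; the argument is essentially a direct application of van Dantzig combined with the openness of the canonical inclusions built into the topology on $\pi_1(\mathcal G,\mathcal Z)$. The only point requiring mild care is the tacit assumption that $\mathcal Z$ has at least one edge (or, more generally, that every vertex of $\mathcal Z$ is incident to some edge), which is standard in the graph-of-groups setup; in the degenerate case of an isolated vertex the conclusion would have to be imposed directly on the vertex group.
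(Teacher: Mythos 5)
Your argument is correct: local compactness comes from Proposition \ref{proposition-topology-graphs-of-groups}(1), and a van Dantzig compact open subgroup of an edge group, transported through the open embeddings $G_e\hookrightarrow G_v\hookrightarrow \pi_1(\mathcal G,\mathcal Z)$, gives a totally disconnected clopen neighborhood of the identity, which kills the identity component. The paper does not prove this lemma itself but cites it from \cite{combination-theorem}; your proof is the standard argument one would expect there, and your caveat about isolated vertices is a fair (and standard) remark.
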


\begin{remark}\label{remark:sufficient-to-prove-amalgam-hnn}
    In the forthcoming section, we prove combination theorems for hyperbolic TDLC groups and for locally quasiconvex hyperbolic TDLC groups. To prove such theorems, by induction, it is sufficient to prove the theorem for amalgamated free products and HNN extensions of topological groups.
\end{remark}

{\bf Bass-Serre trees of amalgamated free product and HNN extension:} Let $G = A \ast_{C} B$. The construction of the Bass--Serre tree of $G$ is classical \cite{serre-trees}. For the sake of completeness, we explain the construction as follows.

Let $\tau$ be a unit interval with vertices $v_{A}$ and $v_{B}$. Define an equivalence relation $\sim$ on $G \times \tau$ induced by the equivalences.
 $$(g_{1},v_{A}) \sim (g_{2},v_{A}) \text{ if } g_{1}^{-1} g_{2} \in A,$$
 $$(g_{1},v_{B}) \sim (g_{2},v_{B}) \text{ if } g_{1}^{-1} g_{2} \in B,$$
 $$(g_{1},t) \sim (g_{2},t) \text{ if } g_{1}^{-1} g_{2} \in C, t \in \tau.$$
 Then $G \times \tau /_\sim$ is called the {\em Bass-Serre tree} $T$ of $G$. 

 If $G=A\ast_C$ is an HNN extension, then the Bass-Serre tree $T$ of $G$ is defined as follows. The vertex set of $T$ is the set of left cosets of $A$ in $G$. The set of edges of $T$ is the set of left cosets of $C$ in $G$. For all $g \in G$, vertices $gA$ and $gtA$ are connected by the edge $gC$.
 
\vspace{.2cm} 
{\bf Notation:} We denote by $T$ the Bass-Serre tree of an amalgamated free product or an HNN extension. 
For $v\in V(T)$, we denote by $G_v$ the $G$-stabilizer of $v$. Similarly, for $e\in E(T)$, $G_e$ denotes the $G$-stabilizer of $e$.
\subsection{Basic properties of Cayley-Abels graphs}\label{subsection:2.2}
A finitely generated group can be realised as a geometric object by constructing its Cayley graph. In case of TDLC groups, an analogous construction was first given by Abels \cite{abels}, which is known as {\em Cayley-Abels graph}. Later in 2008, a less technical approach was introduced by Kr\"{o}n and M\"{o}ller \cite{kron-moller}, called the {\em rough Cayley graph}.
\begin{definition}
    A locally finite connected graph $X$ is said to be a {\em Cayley-Abels graph} of a TDLC group $G$ if $G$ acts vertex transitively on $X$ with compact open vertex stabilizers.
\end{definition} 

\begin{theorem}\label{theorem:existenc-qi-invariance-cayley-abel-graph}
For a TDLC group $G$, we have the following:
\begin{enumerate}
    \item \cite[Theorem 2.2]{kron-moller} There exists a Cayley-Abels graph of $G$ if and only if $G$ is compactly generated.
    \item \cite[Theorem 2.7]{kron-moller} Any two Cayley-Abels graphs of $G$ are quasiisometric.
\end{enumerate}
\end{theorem}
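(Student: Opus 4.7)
The plan is to prove the two parts of the statement separately, since they address different but related aspects of Cayley-Abels graphs. For part (1), forward direction, I would assume $X$ is a Cayley-Abels graph of $G$, fix a vertex $v_0$ with stabilizer $U = \mathrm{Stab}_G(v_0)$ (compact and open by hypothesis), and exploit local finiteness: the vertex $v_0$ has finitely many neighbors $v_1,\dots,v_n$, and by vertex transitivity I can choose $g_i\in G$ with $g_iv_0=v_i$. Setting $S=\{g_1,\dots,g_n\}$, I would argue that the compact set $K = U\cup S\cup S^{-1}$ topologically generates $G$. The key is a connectedness argument: for any $g\in G$, connectedness of $X$ provides a path $v_0=w_0,w_1,\dots,w_m=gv_0$, and each edge step $w_j = h_jv_0\to w_{j+1}=h_{j+1}v_0$ means $h_j^{-1}h_{j+1}v_0$ is a neighbor of $v_0$, so $h_j^{-1}h_{j+1}\in g_{i_j}U$. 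Composing these relations and using that $g v_0 = h_m v_0$ determines $g$ only up to $U$, one expresses $g$ as a product of elements in $K$.

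For part (1), backward direction, I would start from van Dantzig's theorem to pick a compact open subgroup $U$ of $G$, and take a compact symmetric generating set $S$ which I may enlarge to contain $U$. I would then construct the graph $X$ with vertex set $G/U$ and edges $\{gU,hU\}$ whenever $gU\neq hU$ and $g^{-1}h\in USU$. Local finiteness is the critical input, and follows because $USU$ is compact and open, hence the double coset space $U\backslash USU/U$ is a compact discrete set, therefore finite; equivalently the degree at $U$ equals $|USU/U|<\infty$. Connectedness follows directly from the fact that $S$ generates $G$: any product $s_1\cdots s_k$ produces a path $U,s_1U,s_1s_2U,\dots$ in $X$. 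Vertex transitivity via left multiplication and compact open stabilizers are immediate by construction.

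For part (2), given two Cayley-Abels graphs $X_1,X_2$ with basepoints $v_i$ and stabilizers $U_i$, I would define $f\colon V(X_1)\to V(X_2)$ by sending $gv_1$ to $gv_2$, and symmetrically for $g\colon V(X_2)\to V(X_1)$. Well-definedness of $f$ up to bounded error is the crux: if $gv_1=g'v_1$, then $g^{-1}g'\in U_1$, and the candidate images lie in the orbit $U_1\cdot v_2$. Since $U_1$ is compact and $U_2=\mathrm{Stab}_G(v_2)$ is open, the orbit $U_1\cdot v_2\cong U_1/(U_1\cap U_2)$ is finite, so it has finite diameter in $X_2$; this bounded ambiguity yields the additive constant in the quasiisometry inequality. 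The coarse Lipschitz bound comes from observing that moving along a single edge in $X_1$ multiplies by a fixed compact set (the $S$-analog from part (1) for $X_1$), and the image of a compact set acting on $v_2$ has bounded diameter in $X_2$. The main obstacle I anticipate is keeping the various compact sets, orbits, and commensurability data ($U_1\cap U_2$ being open of finite index in both $U_1$ and $U_2$) organized cleanly enough that $f$ is manifestly a quasiisometry rather than merely a coarsely Lipschitz map; a tidy alternative is to check that both $X_i$ are canonically quasiisometric to the coarse structure on $G$ itself in which compact sets are the bounded sets, and invoke transitivity of quasiisometry.
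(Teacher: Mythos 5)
Your argument is correct. The paper itself offers no proof of this statement --- it is quoted directly from Kr\"{o}n--M\"{o}ller --- but your reconstruction is essentially the standard argument from that reference, and it matches the ingredients the paper does reproduce: the construction $\Gamma_G(K,U,A)$ in Subsection \ref{subsection:2.2} is exactly your backward direction of (1), with your compact set $USU$ replaced by a finite symmetric set $A$ satisfying $UAU=AU$ (possible because $UKU$ is compact open, hence a finite union of left cosets of $U$); and the finiteness of orbits of compact sets that powers your part (2) is precisely the paper's Lemma \ref{lemma:cosets of cpt set bounded}, while the explicit map $gU'\mapsto gU$ you propose is the one the paper invokes in the proof of Lemma \ref{lemma:qc-independence}. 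One terminological nit: in the forward direction of (1) your path argument shows that $K=U\cup S\cup S^{-1}$ generates $G$ algebraically, not merely ``topologically generates'' it, and the stronger algebraic statement is what compact generation requires.
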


\vspace{.2cm}

 {\bf Construction of a Cayley-Abels graph:} Let $G$ be a compactly generated TDLC group and $U$ be a compact open subgroup of $G$. If $K$ is a compact generating set for $G$, then there is a finite symmetric set $A$ containing identity such that $K \subset AU$, $UAU=AU$ and $G=\langle A\rangle U$. Define a graph $\Gamma_{G}(K,U,A)$ whose vertex set is $\{gU: g \in G\}$ and edge set is $\{\{gU,gaU\}:g \in G, a \in A\}$. Then it is easy to see that $\Gamma_{G}(K,U,A)$ is a Cayley-Abels graph of $G$ \cite{kron-moller}. We write $\Gamma_{G}$ in place of $\Gamma_{G}(K,U,A)$ when $K,U,A$ are clear from the context.\label{construction}

 \begin{remark}\label{rem:canonical-cayley-abel-graph}
     If $X$ is a Cayley-Abels graph of a TDLC group $G$ with a compact generating set $K$, then there is a compact open subgroup $U$ and a finite set $A$ in $G$ such that $X= \Gamma_{G}(K,U,A)$ \cite[Section 2]{kron-moller}.
 \end{remark}

We now collect some properties of $\Gamma_G$ and the action of $G$ on $\Gamma_G$. For $gU,g'U\in V(\Gamma_G)$, define $d_A(gU,g'U):=\min\{n: g^{-1}g'U= a_{1}a_{2} \cdots a_{n}U, a_{i} \in A \text{ for } 1\leq i\leq n\}$. It is easy to check that $d_A$ is a metric on $V(\Gamma_G).$ 

 \begin{lemma}\label{lemma:induced-path-metric-on-vertices}
     The restriction of the path metric on $\Gamma_G$ to $V(\Gamma_{G})$ is the same as the metric $d_{A}$.
 \end{lemma}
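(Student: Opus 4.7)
The plan is to verify two opposite inequalities: for any two vertices $gU, g'U \in V(\Gamma_G)$, the path distance $d_{\Gamma_G}(gU, g'U)$ (restricted from $\Gamma_G$) coincides with $d_A(gU, g'U)$. Since both quantities depend only on the coset $g^{-1}g'U$, and since $A$ is a finite symmetric set containing the identity satisfying $UAU = AU$, the statement is reasonable and both inequalities are straightforward.

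For the inequality $d_{\Gamma_G}(gU, g'U) \leq d_A(gU, g'U)$, I would start from an expression realizing the minimum in the definition of $d_A$: write $g^{-1}g'U = a_1 a_2 \cdots a_n U$ with $a_i \in A$ and $n = d_A(gU, g'U)$. Then the sequence of vertices
\[
gU,\; ga_1 U,\; ga_1 a_2 U,\; \ldots,\; ga_1 \cdots a_n U \;=\; g'U
\]
is an edge path in $\Gamma_G$ of length $n$, since by the very definition of the edge set, $\{hU, ha U\}$ is an edge for every $h \in G$ and $a \in A$. Hence $d_{\Gamma_G}(gU,g'U) \leq n$.

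For the reverse inequality $d_A(gU, g'U) \leq d_{\Gamma_G}(gU, g'U)$, I would take a geodesic edge path of length $n = d_{\Gamma_G}(gU,g'U)$ between $gU$ and $g'U$, say $h_0 U, h_1 U, \ldots, h_n U$ with $h_0 U = gU$ and $h_n U = g'U$. For each $i$, the edge $\{h_i U, h_{i+1} U\}$ comes from the defining family, so there exists $a_{i+1} \in A$ with $h_{i+1} U = h_i a_{i+1} U$ (here I use that $A$ is symmetric, so the roles of the two endpoints of a defining edge are interchangeable). Telescoping gives $g' U = h_n U = g a_1 a_2 \cdots a_n U$, i.e.\ $g^{-1} g' U = a_1 \cdots a_n U$, so $d_A(gU, g'U) \leq n$. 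Together the two inequalities give equality.

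There is no real obstacle; the only point requiring care is the symmetry of $A$, which is needed to reinterpret an edge traversed "backward" as left multiplication by an element of $A$. One should also observe that $d_A$ is finite on all of $V(\Gamma_G)$ thanks to $G = \langle A\rangle U$, which guarantees that $\Gamma_G$ is connected and hence that the path metric is finite on vertices as well.
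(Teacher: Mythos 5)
Your proposal is correct and follows essentially the same route as the paper: both directions are established by translating words $a_1\cdots a_n$ in $A$ into edge paths and conversely (the paper phrases the second inequality as a contradiction, but the content is identical). Your explicit remarks on the symmetry of $A$ and on $UAU=AU$ for rewriting an edge traversed backward are a slightly more careful rendering of a step the paper leaves implicit.
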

 \begin{proof}
     Let us denote the induced path metric in $\Gamma_G$ by $\rho$. Let $gU,g'U\in V(\Gamma_G)$ and $\rho (gU,g'U)= n$. Then, there is a geodesic of length $n$ joining $gU$ and $g'U$ with vertices $gU, ga_{1}U, ga_{1}a_{2}U, \cdots  ,ga_{1}a_{2} \cdots a_{n}U= g'U $ for some $a_{1}, \cdots, a_{n} \in A$. This implies that $d_{A}(gU,g'U) \leq n$. Let if possible $d_A(gU,g'U)=m<n$. Then, $g^{-1}g'U= a'_{1}a'_{2} \cdots a'_{m}U$ for some $a'_{1}, \cdots, a'_{m} \in A$. Then, there is a path of length $m$ between $gU$ and $g'U$ and we get a contradiction. Hence, $\rho(gU,g'U)= d_{A}(gU,g'U)$ for every $g,g' \in G$.
 \end{proof}
 \begin{remark}
     We call $d_A$ a {\em word metric} on $\Gamma_G$. From now on we use $d_A$ as the metric on $V(\Gamma_G).$
 \end{remark}
  \begin{lemma}\label{lemma:cosets of cpt set bounded}
     Given a compact subset $K\subset G$ there exists a constant $c=c(K)\geq 0$ such that $d_A(U,KU):=\inf\{d_A(U,kU): k \in K\}\leq c$. Moreover, for any $k\in K$, $d_A(U,kU)\leq c.$
 \end{lemma}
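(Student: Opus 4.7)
The plan is to exploit two facts: (i) since $U$ is open in $G$, the quotient $G/U$ (viewed as the set of left cosets with the quotient topology) is discrete, and (ii) every element of $G$ can be pushed to $U$ through finitely many $A$-multiplications, because $G=\langle A\rangle U$ by construction.

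First I would observe that the function $\varphi:G\to\mathbb{Z}_{\geq 0}$ defined by $\varphi(g):=d_A(U,gU)$ is well-defined and finite for every $g\in G$. Indeed, from $G=\langle A\rangle U$ together with the symmetry of $A$, any $g\in G$ can be written as $g=a_1a_2\cdots a_nu$ with $a_i\in A$ and $u\in U$, giving $gU=a_1\cdots a_n U$ and thus $\varphi(g)\leq n<\infty$. Also, $\varphi$ is constant on left cosets of $U$: if $g^{-1}g'\in U$ then $gU=g'U$ and hence $\varphi(g)=\varphi(g')$.

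Next I would use compactness of $K$ to reduce to finitely many cosets. Because $U$ is open, every left coset $hU$ is open, so the collection $\{hU:h\in G\}$ is an open partition of $G$. The subfamily $\{kU:k\in K\}$ therefore forms an open cover of $K$ by pairwise disjoint open sets, and by compactness only finitely many of them can be nonempty. Write this finite family as $\{g_1U,\ldots,g_m U\}$, and set
\[
c:=\max\{\varphi(g_1),\varphi(g_2),\ldots,\varphi(g_m)\}.
\]
Then for any $k\in K$ we have $kU=g_iU$ for some $i$, so $d_A(U,kU)=\varphi(k)=\varphi(g_i)\leq c$. The first assertion, $d_A(U,KU)\leq c$, follows immediately by taking the infimum.

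There is essentially no obstacle here; the only subtlety is making sure we do not conflate topological and metric notions. The one point that must be noted explicitly is that openness of $U$ forces the coset space $G/U$ to be discrete, which is what makes compactness of $K$ translate into finiteness of $\{kU:k\in K\}$. Without the openness of $U$ this step would fail, and neither the boundedness of $\varphi$ on $K$ nor the finiteness of $\varphi$ itself would be clear from the construction.
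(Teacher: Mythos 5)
Your proof is correct and follows essentially the same approach as the paper: use openness of $U$ to extract finitely many distinct cosets of $U$ covering the compact set $K$, and take $c$ to be the maximum of the finitely many resulting distances $d_A(U,k_iU)$. The additional observation that $\varphi(g)=d_A(U,gU)$ is finite because $G=\langle A\rangle U$ is a reasonable explicit justification (the paper leaves this implicit via connectedness of $\Gamma_G$) but does not constitute a different route.
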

 \begin{proof}
     Since $U$ is open in $G$, $\{kU:k \in K\}$ is an open cover of $K$. Then, for some $n\in\mathbb N$, there exists $k_{1},k_{2}, \cdots,k_{n} \in K$ such that $K \subset \bigcup_{i=1}^{n} k_{i}U$. This implies that $KU= \bigcup_{i=1}^{n} k_{i}U$. Therefore, for any $k \in K$, $kU=k_{j}U$ for some $1\leq j\leq n$. Hence,  $d_A(U,KU) = \min\{d_A(U,k_{i}U): i=1,2, \cdots, n\}$. Taking $c=\max\{d_A(U,k_{i}U): i=1,2, \cdots, n\}$, we are done.
 \end{proof}
 
 Suppose $H<G$ are compactly generated TDLC groups such that $H$ is open. Let $K_H$ and $K_G$ be compact generating sets of $H$ and $G$, respectively. Let $U$ be a compact open subgroup of $G$. Then, $U_H=U\cap H$ is compact and open in $H$. Let $A_G$ and $A_H$ be finite symmetric subsets of $H$ and $G$ respectively such that $K_H\subset A_HU_H$ and $K_G\subset A_GU.$  Let $\Gamma_H=\Gamma_H(K_H,U_H,A_H)$ and $\Gamma_G=\Gamma_G(K_G,U,A_G)$ be Cayley-Abels graphs of $H$ and $G$, respectively. We denote by $d_{H}$ and $d_{G}$ the metric on $\Gamma_H$ and $\Gamma_G$, respectively. We now assume that $A_H\subset A_G$. Note that distinct cosets of $U_H$ in $H$ give distinct cosets of $U$ in $H$. Thus, we have an embedding $\iota:\Gamma_H\to \Gamma_G$ that sends the vertex $hU_H$ to the vertex $hU$. Thus, we can treat $\Gamma_H$ as a subgraph of $\Gamma_G$. The proof of the following lemma follows similarly to the proof of Lemma \ref{lemma:induced-path-metric-on-vertices}.
 \begin{lemma}
     The restriction of the induced path metric on $\Gamma_{H}$ from $\Gamma_{G}$ to $V(\Gamma_H)$ is the same as the metric $d_{A_H}$ given by $d_{A_H}(hU_{H},h'U_{H}):=\min\{n: h^{-1}h'U_{H}= a_{1}a_{2} \cdots a_{n}U_{H}, a_{i} \in A_{H}\}$.                      \qed
 \end{lemma}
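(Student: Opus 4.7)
The statement is a direct analog of Lemma \ref{lemma:induced-path-metric-on-vertices} applied to the subgraph $\Gamma_H$, so my plan is to reproduce that proof verbatim in the present setting. The key observation is that $\Gamma_H=\Gamma_H(K_H,U_H,A_H)$ is itself a Cayley-Abels graph of $H$ by the construction described at the beginning of Subsection \ref{subsection:2.2}, so all the machinery of Lemma \ref{lemma:induced-path-metric-on-vertices} transfers. The identification of $V(\Gamma_H)$ as a subset of $V(\Gamma_G)$ via the injection $\iota: hU_H\mapsto hU$ (which is well defined and injective because $U\cap H=U_H$) together with the inclusion $A_H\subset A_G$ ensures that edges of $\Gamma_H$ really are edges of $\Gamma_G$, so paths in $\Gamma_H$ and their lengths are preserved.

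Let $\rho_H$ denote the induced path metric on $\Gamma_H$ (i.e.\ the shortest-path metric using only edges of $\Gamma_H$), and fix two vertices $hU_H,h'U_H\in V(\Gamma_H)$. First I would establish the inequality $\rho_H(hU_H,h'U_H)\le d_{A_H}(hU_H,h'U_H)$. If $d_{A_H}(hU_H,h'U_H)=m$, pick a decomposition $h^{-1}h' U_H = a_1 a_2\cdots a_m U_H$ with $a_i\in A_H$; then the sequence of vertices
$$hU_H,\ ha_1 U_H,\ ha_1 a_2 U_H,\ \dots,\ ha_1\cdots a_m U_H = h' U_H$$
is a walk of length $m$ in $\Gamma_H$ because each consecutive pair is connected by an edge of $\Gamma_H$ by the very definition of its edge set. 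Therefore $\rho_H(hU_H,h'U_H)\le m$.

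For the reverse inequality, I would pick a geodesic in $\Gamma_H$ realizing $n:=\rho_H(hU_H,h'U_H)$ and read off the generators along its edges. By the definition of $\Gamma_H$, each edge is of the form $\{gU_H,gaU_H\}$ with $a\in A_H$, so successive vertices of the geodesic can be written as $hU_H, ha_1 U_H, ha_1 a_2 U_H,\dots, ha_1\cdots a_n U_H = h'U_H$ for some $a_1,\dots,a_n\in A_H$. This yields the relation $h^{-1}h' U_H=a_1\cdots a_n U_H$, and hence $d_{A_H}(hU_H,h'U_H)\le n$. Combining both inequalities gives $\rho_H=d_{A_H}$ on $V(\Gamma_H)$.

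There is no real obstacle in this proof: it is a mechanical adaptation of the previous lemma, and the only bookkeeping is the verification, already essentially noted in the text preceding the statement, that $\iota$ is an injective graph homomorphism from $\Gamma_H$ into $\Gamma_G$ via the inclusion $A_H\subset A_G$. The statement is used subsequently to control how geodesics in $\Gamma_H$ sit inside $\Gamma_G$, so once both inequalities are in place the lemma is complete.
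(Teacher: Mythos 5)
Your proof is correct and follows essentially the same route the paper intends: the paper explicitly states that this lemma is proved "similarly to the proof of Lemma \ref{lemma:induced-path-metric-on-vertices}," and your two-inequality argument, together with the observation that $A_H\subset A_G$ makes $\iota$ an injective graph homomorphism so that the induced path metric coincides with the intrinsic path metric of $\Gamma_H$, is exactly that adaptation.
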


\begin{lemma}\cite[Lemma 2.3]{combination-theorem}\label{lemma:proper-embedding}
     The map $\iota: \Gamma_{H} \rightarrow \Gamma_{G}$ is a proper embedding.
 \end{lemma}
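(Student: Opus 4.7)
My plan is to verify properness of $\iota$ by bounding $d_{A_H}$ in terms of $d_{A_G}$, treating the two inequalities separately. The easy direction --- that $\iota$ is $1$-Lipschitz on vertices --- is free from the containment $A_H \subset A_G$: if $d_{A_H}(hU_H, h'U_H) = n$, write $h^{-1}h' U_H = a_1 \cdots a_n U_H$ with $a_i \in A_H$. Multiplying on the right by $U$ and using $U_H \subset U$ gives $h^{-1}h' U = a_1 \cdots a_n U$ with $a_i \in A_G$, hence $d_{A_G}(hU, h'U) \leq n$.

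For the properness direction, fix $n \geq 0$ and suppose $d_{A_G}(\iota(hU_H), \iota(h'U_H)) \leq n$. By definition, $h^{-1}h'$ lies in the algebraic set
\[
K_n := \bigcup_{m=0}^{n} A_G^{m} U,
\]
which is a finite union of products of compact sets (using that $A_G$ is finite and $U$ is compact), and is therefore compact in $G$. Since $H$ is an \emph{open} subgroup of the topological group $G$, it is also closed, so $K_n \cap H$ is closed in the compact set $K_n$ and hence compact in $H$. Crucially, $K_n \cap H$ depends only on $n$, not on the particular pair $h, h'$.

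Now I would invoke Lemma \ref{lemma:cosets of cpt set bounded}, but applied inside $H$ with compact open subgroup $U_H$ and the compact set $K_n \cap H$: this yields a constant $c_n \geq 0$ such that $d_{A_H}(U_H, k U_H) \leq c_n$ for every $k \in K_n \cap H$. Since $h^{-1}h' \in K_n \cap H$ and $H$ acts on $V(\Gamma_H)$ by isometries, left-translation by $h$ gives
\[
d_{A_H}(hU_H, h'U_H) = d_{A_H}\bigl(U_H, (h^{-1}h')U_H\bigr) \leq c_n.
\]
Setting $\eta(n) := c_n$ furnishes the desired properness function. The only delicate input is the passage from compactness in $G$ to compactness in $H$; this is automatic because open subgroups of topological groups are closed, and beyond this observation the argument reduces entirely to Lemma \ref{lemma:cosets of cpt set bounded}.
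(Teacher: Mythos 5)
Your proof is correct. The paper itself only cites this lemma from \cite{combination-theorem} without reproducing an argument, and your proposal supplies a valid self-contained proof: the containment $h^{-1}h' \in K_n \cap H$ with $K_n = \bigcup_{m=0}^{n} A_G^m U$ compact, the observation that open subgroups are closed, and the application of Lemma \ref{lemma:cosets of cpt set bounded} inside $H$ (with $U_H$ and $A_H$) together give a uniform bound $c_n$ depending only on $n$, which by left-invariance of $d_{A_H}$ is exactly the properness function $\eta(n)=c_n$. This is the standard argument one would expect for the cited result, and all the delicate points (compactness of $K_n$, finiteness of distances via connectedness of $\Gamma_H$, restriction to vertices) are handled correctly.
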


 \begin{remark}
     Throughout the paper, unless stated otherwise, when $H$ is an open compactly generated subgroup of a compactly generated TDLC group, $\Gamma_H$ and $\Gamma_G$ denote the Cayley-Abels graphs of $H$ and $G$, respectively, and $\iota:\Gamma_H\to\Gamma_G$ denotes the embedding.
 \end{remark}

 \subsection{Hyperbolic TDLC groups}
 In \cite{gromov-hypgps}, Gromov introduced the notion of hyperbolic geodesic metric spaces and proved that hyperbolicity is an invariant of quasiisometry. Then, by Theorem \ref{theorem:existenc-qi-invariance-cayley-abel-graph}, we have the following well-defined notion of a hyperbolic TDLC group.
 \begin{definition}
     A compactly generated TDLC group $G$ is said to be {\em hyperbolic} if some (any) Cayley-Abels graph of $G$ is a Gromov hyperbolic space. 
 \end{definition}
 The following theorem shows that, in a hyperbolic geodesic metric space, geodesics and quasigeodesics having the same endpoints are not far apart from each other. 
 
 \begin{theorem}[Stability of quasigeodesics]\cite[Theroem 1.7, III.H]{bridson-haefliger}\label{stability of quasigeodesics}
     Given $\delta\geq 0,\lambda\geq 1,\epsilon\geq 0$ there exists $D=D(\delta,\lambda,\epsilon)$ such that the following hold:
     
     If $X$ is a $\delta$-hyperbolic space, $\alpha$ is a $(\lambda, \epsilon)$-quasigeodesic in $X$ and $\beta$ is a geodesic joining the endpoints of $\alpha$ in $X$, then the Hausdorff distance between $\alpha$ and $\beta$ is at most $D$.
 \end{theorem}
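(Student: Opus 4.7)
The plan is to follow the classical Morse-lemma argument for Gromov-hyperbolic spaces, organized in three stages: (i) reduce to the case of a continuous quasigeodesic; (ii) prove the one-sided containment $\beta\subseteq N_{D_1}(\alpha)$ via a thin-triangle argument; and (iii) deduce the reverse containment $\alpha\subseteq N_{D_2}(\beta)$ from (ii) combined with the quasi-isometric control on arc length.

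For stage (i), I would replace $\alpha:[a,b]\to X$ by a continuous, piecewise-geodesic approximation $\alpha'$ obtained by subdividing $[a,b]$ at the integers (together with the endpoints) and joining consecutive images under $\alpha$ by geodesic segments. A direct estimate shows $\alpha'$ is a continuous $(\lambda',\epsilon')$-quasigeodesic with constants depending only on $\lambda,\epsilon$, and at Hausdorff distance at most $\lambda+\epsilon$ from $\alpha$. This reduces the problem to the continuous case at the cost of an additive loss in the final constant.

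Assuming $\alpha$ is continuous, set $D_0:=\max_{p\in\beta}d(p,\alpha)$, attained at some $p_0\in\beta$. Pick points $q_-,q_+$ on $\beta$ on either side of $p_0$ at distance roughly $2D_0$ (or take the endpoints of $\beta$ when the sub-segment is shorter), and parameters $s_-<s_+$ with $d(\alpha(s_\pm),q_\pm)\leq D_0$. Continuity of $\alpha$ allows me to further arrange that $\alpha([s_-,s_+])$ avoids the ball $B_{D_0/2}(p_0)$. Forming the geodesic quadrilateral with corners $q_-,\alpha(s_-),\alpha(s_+),q_+$ (replacing the quasigeodesic side by a geodesic $\gamma$), triangulating it along a diagonal, and applying $\delta$-thinness twice forces $p_0$ to lie within $O(\delta)$ of $\gamma$; a short iterated thin-triangle argument then transfers this to a point of $\alpha|_{[s_-,s_+]}$, contradicting the ball-avoidance above unless $D_0\leq D_1$ for an explicit $D_1=D_1(\delta,\lambda,\epsilon)$. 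For stage (iii), take any maximal sub-arc $\alpha|_{[s,t]}$ lying outside the open $D_1$-neighborhood of $\beta$; its endpoints satisfy $d(\alpha(s),\beta),d(\alpha(t),\beta)\leq D_1$, so the quasigeodesic inequality $|t-s|\leq\lambda(d(\alpha(s),\alpha(t))+\epsilon)$ bounds the diameter of $\alpha([s,t])$ in terms of $D_1,\delta,\lambda,\epsilon$, and hence bounds the distance from any point of the sub-arc to $\beta$ by a constant $D_2$. Setting $D:=\max\{D_1,D_2\}$ gives the required Hausdorff bound.

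The main obstacle is the quantitative bookkeeping in stage (ii): the thin-triangle/approach-geodesic construction must be arranged so that the additive constants arising from the four sides of the quadrilateral, the two invocations of $\delta$-thinness, and the continuity approximation in stage (i) all collapse into a single explicit bound on $D_0$. Once the geometric picture is correctly set up the remaining estimates are routine, but sloppy tracking of these additive terms is the typical source of error.
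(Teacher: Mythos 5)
This is not a result the paper proves: it is imported verbatim from Bridson--Haefliger (III.H, Theorem 1.7) and used as a black box, so there is no in-paper argument to compare against. Your outline follows the standard textbook proof (taming, one-sided containment via a length-versus-distance tradeoff, reverse containment via maximal excursions), and the overall architecture is sound; but two steps are compressed to the point of hiding the actual content.

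First, in stage (ii), the phrase ``a short iterated thin-triangle argument then transfers this to a point of $\alpha|_{[s_-,s_+]}$'' is where the whole theorem lives, and as stated it is circular: passing from the geodesic $\gamma$ with endpoints $\alpha(s_-),\alpha(s_+)$ to the quasigeodesic arc with the same endpoints is precisely an instance of the statement being proved, and no \emph{bounded} number of thin-triangle applications can do it. What is needed is the quantitative divergence input: by dyadic subdivision of the arc $\alpha|_{[s_-,s_+]}$ (so the number of triangles grows like $\log_2$ of its arc length $L$), every point of $\gamma$ lies within $\delta\,|\log_2 L|+1$ of the arc. One must then bound $L$ \emph{linearly} in $D_0$ using the arc-length control for tamed quasigeodesics together with $d(\alpha(s_-),\alpha(s_+))\leq 6D_0$, and close the loop by observing that $D_0\leq 2\delta+\delta\log_2(\lambda' D_0+\epsilon')+1$ forces $D_0$ to be bounded. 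Without making the logarithm-versus-linear comparison explicit, the ``contradiction with ball-avoidance'' does not materialize.

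Second, in stage (iii), the inference ``$d(\alpha(s),\beta),d(\alpha(t),\beta)\leq D_1$, so the quasigeodesic inequality bounds the diameter of $\alpha([s,t])$'' is invalid as written: the nearest points of $\beta$ to $\alpha(s)$ and $\alpha(t)$ could be far apart, so $d(\alpha(s),\alpha(t))$ is not yet controlled. The standard repair is a connectedness argument: the subsegment of $\beta$ between those two nearest points is covered by the $D_1$-neighborhoods of $\alpha|_{[a,s]}$ and $\alpha|_{[t,b]}$ (no point of it can be $D_1$-close to $\alpha((s,t))$ by maximality of the excursion), so some point of $\beta$ is within $D_1$ of both pieces, giving $d(\alpha(v_1),\alpha(v_2))\leq 2D_1$ with $v_1\leq s\leq t\leq v_2$, and only then does the quasigeodesic inequality bound $|t-s|$ and hence the diameter of the excursion. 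With these two repairs your proof is the standard one and is correct.
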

 It is easy to see that a finite extension of a discrete hyperbolic group is hyperbolic. An analogous phenomenon also holds in the TDLC setting. 
\begin{remark}[Compact extension of hyperbolic TDLC groups]\label{remark:compact-extension}
    Let $G$ be a compactly generated TDLC group and $N$ be a compact open normal subgroup of $G$. Then $G$ has a Cayley-Abels graph $\Gamma_{G}= \Gamma_{G}(K,N,A)$, where $K$ is a compact generating set of $G$ and $A$ is a finite symmetric set such that $NAN=AN$ and $G=\langle A \rangle N$. Hence, $G/N=\langle \{a_{i}N: a_{i} \in A\}\rangle$,i.e., $G/N$ is finitely generated. Therefore, $G/N$ has a Cayley graph which is $\Gamma_{G}$ itself by construction. Therefore, $G/N$ is quasiisometric to $G$. Thus, if $G/N$ is hyperbolic then $G$ is also hyperbolic.
\end{remark}

The following result is an analog of the Svarc-Milnor lemma for locally compact groups:
\begin{theorem}\cite[Theorem 4.C.5]{cornulier-harpe-book-metric-geometry}\label{milnor-svarc-lemma}
    Let $G$ be a locally compact group acting on a (coarsely connected) geodesic metric space $X$ properly and cocompactly. Then $G$ is compactly generated and for any fixed $x \in X$, the orbit map $g \mapsto gx: G \rightarrow X$ is a quasiisometry.
\end{theorem}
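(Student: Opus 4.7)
The plan is to adapt the classical \v Svarc–Milnor argument, where ``finitely generated'' is replaced by ``compactly generated'' and discreteness is replaced by the properness condition from the first definition of this section. Throughout, I fix a base point $x_0 \in X$ and use cocompactness to choose a compact set $K \subset X$ with $GK = X$, together with an $R > 0$ such that $K \subset B_R(x_0)$.

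First I would construct the candidate compact generating set. Define
\[
S := \overline{\{g \in G : d(x_0, g x_0) \leq 3R\}}.
\]
Since $\{g \in G : g B_{3R}(x_0) \cap B_{3R}(x_0) \neq \emptyset\}$ is relatively compact by properness and contains the set defining $S$, $S$ is compact. It is also symmetric and contains the identity; what remains is to show $S$ generates $G$. Given $g \in G$, choose a geodesic $\gamma : [0, L] \to X$ from $x_0$ to $g x_0$ and sample it at points $x_0 = y_0, y_1, \dots, y_n = g x_0$ with $d(y_i, y_{i+1}) \leq R$ and $n \leq L/R + 1$. By cocompactness, for each $i$ there is $g_i \in G$ with $d(y_i, g_i x_0) \leq R$, and I fix $g_0 = 1, g_n = g$. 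The triangle inequality gives $d(g_i x_0, g_{i+1} x_0) \leq 3R$, so each $s_i := g_i^{-1} g_{i+1}$ lies in $S$, and the telescoping product $g = s_0 s_1 \cdots s_{n-1}$ writes $g$ as a product of elements of $S$. This establishes compact generation and, as a byproduct, the bound
\[
|g|_S \leq \frac{1}{R} d(x_0, g x_0) + 1,
\]
where $|\cdot|_S$ denotes the word length with respect to $S$.

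Next, for the quasi-isometry, I would verify both directions of the coarse comparison between the word metric $d_S$ on $G$ and the pulled-back metric on the orbit. The lower bound above gives $d(x_0, g x_0) \geq R(|g|_S - 1)$. For the upper bound, set $D := \sup_{s \in S} d(x_0, s x_0)$, which is finite because $s \mapsto d(x_0, sx_0)$ is continuous on the compact set $S$. Then writing $g = s_1 \cdots s_m$ with $m = |g|_S$ and applying the triangle inequality together with the fact that $G$ acts by isometries yields $d(x_0, g x_0) \leq D \cdot |g|_S$. Combining these with the fact that any two compact generating sets of $G$ give quasi-isometric word metrics, the orbit map $g \mapsto g x_0$ is a quasi-isometric embedding. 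Coarse surjectivity of the orbit map is immediate from $GK = X$ and $K \subset B_R(x_0)$.

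The main conceptual obstacle, and the only place where the TDLC/locally compact setting differs substantively from the discrete one, is ensuring that $S$ is actually compact rather than merely closed and bounded. This is precisely where the properness hypothesis (in the sense of the definition opening this section) is essential: one needs $\{g : g B_{3R}(x_0) \cap B_{3R}(x_0) \neq \emptyset\}$ to be relatively compact, not merely finite. A secondary technical point is that on a general (merely coarsely connected) geodesic metric space, sampling along a geodesic is straightforward, but one should check that $S$ does not depend up to quasi-isometry on the choices of $x_0$, $K$, and $R$; this follows because any two such choices produce compact generating sets, which are automatically at bounded word-distance from one another.
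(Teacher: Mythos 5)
The paper does not prove this statement --- it is quoted directly from Cornulier--de la Harpe --- so there is no internal proof to compare against; your argument is the standard \v{S}varc--Milnor proof (compact generating set from a $3R$-ball, telescoping along a sampled geodesic, two-sided comparison of word length with displacement) and matches the one in the cited reference. The one step to be careful about is where you deduce compactness of $S$ from the relative compactness of $\{g \in G : gB_{3R}(x_0)\cap B_{3R}(x_0)\neq\emptyset\}$: the paper's definition of a proper action quantifies over \emph{compact} subsets of $X$, and a closed ball in a general geodesic metric space need not be compact, so as written this step requires either that $X$ is a proper metric space or that properness is formulated over bounded sets (``metrically proper,'' as in the cited reference). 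This is harmless in every application made in the paper, since the spaces there are locally finite graphs and hence proper metric spaces, but it is worth stating the hypothesis you are actually using.
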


Let $H$ be a normal hyperbolic TDLC subgroup of a hyperbolic TDLC group. For each $g \in G$, there is an automorphism of $H$ taking $h$ to $g^{-1}hg$. This induces a map $\phi_{g}:V(\Gamma_H)\to V(\Gamma_H)$ defined by $\phi_{g}(hU_H) :=g^{-1}hgU_{H}$. This gives rise to a map from $\Gamma_{H}$ to itself, sending an edge joining $aU_H$ and $bU_H$ to a geodesic joining $\phi_{g}(aU_H)$ to $\phi_{g}(bU_H)$. We still call this map $\phi_{g}$ by abuse of notation. The following lemma is important to us and will be used frequently in the later sections.
\begin{lemma}\label{lemma:conjugation-qi}
 For each $g\in G$, $\phi_{g}$ is a quasiisometry.
 \end{lemma}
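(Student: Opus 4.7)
The plan is to verify directly that $\phi_g$ is a quasiisometry on the vertex metric $d_{A_H}$, by (i) establishing coarse Lipschitzness, and (ii) exhibiting $\phi_{g^{-1}}$ as a coarse inverse. The essential use of the normality of $H$ in $G$ is that the conjugation $\alpha_g(h):=g^{-1}hg$ is a topological automorphism of $H$; in particular, $g^{-1}U_H g$ is a compact open subgroup of $H$, and $g^{-1}A_H g$ is a finite subset of $H$.

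I would first address coarse well-definedness. If $h_1U_H = h_2U_H$, write $h_1^{-1}h_2 =: u \in U_H$, so that $g^{-1}h_2 g = (g^{-1}h_1 g)(g^{-1}ug)$ with $g^{-1}ug$ in the compact set $g^{-1}U_H g \subset H$. By Lemma \ref{lemma:cosets of cpt set bounded}, $d_{A_H}(U_H, (g^{-1}ug)U_H) \leq c_0$ for a constant $c_0 = c_0(g)$; hence different representatives of $hU_H$ yield cosets within $c_0$ in $\Gamma_H$, so $\phi_g$ is well-defined up to displacement $\leq c_0$. For coarse Lipschitzness, if $hU_H$ and $haU_H$ are adjacent with $a \in A_H$, then the left $H$-invariance of $d_{A_H}$ gives
\[ d_{A_H}(\phi_g(hU_H), \phi_g(haU_H)) = d_{A_H}(U_H, (g^{-1}ag)U_H) \leq c_1, \]
where $c_1 := \max_{a \in A_H} d_{A_H}(U_H, (g^{-1}ag)U_H)$ is finite because $A_H$ is finite. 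Iterating along a geodesic of length $n$ from $hU_H$ to $h'U_H$ in $\Gamma_H$ then yields $d_{A_H}(\phi_g(hU_H), \phi_g(h'U_H)) \leq c_1 n + c_0$.

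The same argument applied to $\phi_{g^{-1}}$ shows that it is likewise coarsely Lipschitz. On chosen representatives, $\phi_{g^{-1}} \circ \phi_g(hU_H) = g(g^{-1}hg)g^{-1}U_H = hU_H$ and similarly $\phi_g \circ \phi_{g^{-1}} = \mathrm{id}$, so each composite is within $c_0$ of the identity on $V(\Gamma_H)$. Standard coarse-geometric bookkeeping then produces a matching lower Lipschitz bound and coarse surjectivity, so $\phi_g$ is a quasiisometry on $(V(\Gamma_H), d_{A_H})$. Extending $\phi_g$ to edges via geodesics of length at most $c_1 + c_0$, as prescribed in the statement, yields the desired quasiisometry $\Gamma_H \to \Gamma_H$. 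The main obstacle is exactly the coarse well-definedness: $\phi_g(hU_H) = g^{-1}hg\, U_H$ depends on the representative $h$ because $g^{-1}U_H g \ne U_H$ in general, and this ambiguity is controlled uniformly by Lemma \ref{lemma:cosets of cpt set bounded} together with the normality of $H$, which is what guarantees that $g^{-1}U_H g$ is a compact subset of $H$ to which the lemma applies.
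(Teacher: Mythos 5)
Your proof is correct and follows essentially the same route as the paper's: bound the displacement of each generator $a\in A_H$ under conjugation by $g$ using finiteness of $A_H$, propagate along geodesics via the triangle inequality and left-invariance of $d_{A_H}$, and use $\phi_{g^{-1}}$ as a coarse inverse to get the lower bound and coarse surjectivity. Your explicit treatment of the coarse well-definedness of $\phi_g$ on cosets (via Lemma \ref{lemma:cosets of cpt set bounded} applied to the compact set $g^{-1}U_Hg\subset H$) is a point the paper's proof passes over silently, and it is a welcome addition.
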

 \begin{proof}
      Since $V(\Gamma_H)$ is coarsely dense in $\Gamma_H$, it is sufficient to check that $\phi_g$ is a quasiisometry on $V(\Gamma_H)$. Note that any element $hU_H$ can be written as $hU_H=a_{1}a_{2}\cdots a_{n}U_H$, where $a_i \in A_H$ and $n$ is minimum. Now, since $A_H$ is finite, let $L:= \max\{d_{A_{H}}(U_H,\phi_{g}(aU_H)):a \in A_H\}$.

    Therefore, 
         \begin{align*}
             d_{A_{H}}(U_H,\phi_{g}(hU_H)) &= d_{A_{H}}(U_H,g^{-1}a_1 \cdots a_{n}gU_H)\\ &\leq d_{A_{H}}(U_H,g^{-1}a_{1}gU_H) + d_{A_{H}}(g^{-1}a_{1}gU_H,g^{-1}a_{1}a_{2}gU_H)+ \cdots \\ &+ d_{A_{H}}(g^{-1}a_1 \cdots a_{n-1}gU_H,g^{-1}a_1 \cdots a_{n}gU_H)\\ &=d_{A_{H}}(U_H,g^{-1}a_{1}gU_H) + 
            \cdots + d_{A_{H}}(U_H,g^{-1}a_{n}gU_H)\\ &\leq nL= Ld_{A_{H}}(U_H,hU_H).
     \end{align*}

Similarly, for $\phi_{g^{-1}}$, we get a constant $L'$ with $d_{A_{H}}(U_H,\phi_{g^{-1}}(hU_H)) \leq L'd_{A_{H}}(U_H,hU_H)$ for all $h \in H$. Therefore, $d_{A_{H}}(U_H,hU_H)= d_{A_{H}}(U_H,\phi_{g^{-1}}(\phi_{g}(hU_H))) \leq L' d_{A_{H}}(U_H,\phi_{g}(hU_H))$. Taking $\lambda=max\{L,L'\}$, we get $\frac{1}{\lambda}d_{A_{H}}(U_H,hU_H) \leq d_{A_{H}}(U_H,\phi_{g}(hU_H)) \leq \lambda d_{A_{H}}(U_H,hU_H)$. Therefore, $\phi_{g}$ is a quasiisometric embedding. Since $\phi_{g}$ is coarsely surjective $\phi_{g}$ is a quasiisometry.
 \end{proof} 

 {\bf Gromov boundary of a hyperbolic TDLC group:}
 In \cite{gromov-hypgps}, Gromov introduced the notion of a boundary of a hyperbolic geodesic metric space, which is called the Gromov boundary. He also proved that quasiisometric proper hyperbolic spaces have homeomorphic Gromov boundaries. This motivates one to define the {\em Gromov boundary} of a hyperbolic TDLC group $G$ as the Gromov boundary of some (any) Cayley-Abels graph $X$ of $G$. It is a set of the equivalence classes of geodesic rays in $X$ starting from a base point $x_0$, i.e.
\[
\partial G := \partial X= \{\, [\gamma] \mid \gamma\text{ is a geodesic ray in } X \text{ such that } \gamma(0)=x_0 \},
\]
where two rays are equivalent if their Hausdorff distance is finite. Define $\overline{X}:=X\cup\partial X.$ Then, with a natural topology, $\overline{X}$ is a compact metrizable space and $X$ is an open dense subset of $\overline{X}$, see \cite{bridson-haefliger} for more details.
A hyperbolic TDLC group $G$ is said to be {\em non-elementary} if the cardinality of the Gromov boundary of $G$ is at least $3$.
\subsection{Convergence TDLC groups}
 
The notion of a convergence group was introduced by Gehring and Martin \cite{gehring-martin} in
order to describe the dynamical properties of Kleinian groups acting on the ideal
sphere of (real) hyperbolic space. Later, this notion was generalised for groups acting
on compact Hausdorff spaces by several people such as Tukia, Freden, and Bowditch \cite{tukia},\cite{freden},\cite{bowditch-cgnce}. 

For a Hausdorff space $X$, let $X^3$ be the space of ordered triples with the product topology. We denote by $\theta(X)$ the set of distinct unordered triples of $X$ with the subspace topology from $X^3$. A discrete group $G$ acting on a compact Hausdorff space $X$ is said to be a {\em convergence group} if the induced action of $G$ on $\theta(X)$ is properly discontinuous. Moreover, $G$ is said to be a {\em uniform convergence group} if the action of $G$ on $\theta(X)$ is cocompact. In \cite{bowditch-jams}, Bowditch gave a topological characterisation of discrete hyperbolic groups. In \cite[Theorem A]{mathieu-dennis-locally-compact-convergence}, the authors extended it in the setting of locally compact hyperbolic groups. The goal of this subsection is to recall this result and other notions related to topological convergence groups that are relevant to us. We start with the following definition.

\begin{definition}
  A locally compact topological group $G$ acting on a compact Hausdorff space $X$ is said to be a {\em convergence group} if the induced action of $G$ on $\theta(X)$ is proper. Moreover, $G$ is said to be a {\em uniform convergence group} if the action of $G$ on $\theta(X)$ is cocompact.
\end{definition}
It is well-known to experts that a TDLC hyperbolic group acts as a uniform convergence group on its Gromov boundary (Proposition \ref{prop:uniform-convergence-action}). For the sake of completeness, we include a proof of this fact. The converse of this fact is the content of \cite{mathieu-dennis-locally-compact-convergence}. A continuous map between two topological spaces is said to be {\em proper} if the inverse image of a compact set is compact.

\begin{lemma}\label{proper surjective}
     Let $G$ be a locally compact topological group acting on locally compact Hausdorff spaces $X$ and $Y$. Suppose $f: Y \to X$ is a proper surjective $G$-equivariant map. Then, we have the following:
     \begin{enumerate}
         \item $G$ acts properly on $X$ if and only if it acts properly on $Y$ . 
    \item $G$ acts cocompactly on $X$ if and only if it acts cocompactly on $Y$.
     \end{enumerate}
 \end{lemma}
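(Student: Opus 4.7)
The plan is to prove both statements by transferring compact \emph{witnesses} of the respective dynamical properties across the map $f$, using three ingredients: continuity of $f$ (which pushes compact sets from $Y$ to $X$), properness of $f$ (which pulls compact sets from $X$ back to $Y$), and $G$-equivariance. Surjectivity of $f$ enters whenever one needs to lift a point of $X$ to a point of $Y$.

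For part $(1)$, let me write $\mathrm{Tr}_X(K) := \{g \in G : gK \cap K \neq \emptyset\}$ (and analogously $\mathrm{Tr}_Y$), so that the action on $X$ is proper iff $\mathrm{Tr}_X(K)$ is relatively compact for every compact $K \subseteq X$. If $G$ acts properly on $X$ and $K \subseteq Y$ is compact, then $f(K)$ is compact by continuity and equivariance immediately gives the containment $\mathrm{Tr}_Y(K) \subseteq \mathrm{Tr}_X(f(K))$; since a subset of a relatively compact set is relatively compact, this yields properness on $Y$. Conversely, if $G$ acts properly on $Y$ and $K \subseteq X$ is compact, then $f^{-1}(K)$ is compact by properness of $f$. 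Given $g \in \mathrm{Tr}_X(K)$, pick $x, x' \in K$ with $gx = x'$, then use surjectivity to lift $x$ to some $y \in f^{-1}(x)$; equivariance gives $f(gy) = gx = x' \in K$, so both $y$ and $gy$ lie in $f^{-1}(K)$, proving $\mathrm{Tr}_X(K) \subseteq \mathrm{Tr}_Y(f^{-1}(K))$, which is relatively compact.

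For part $(2)$, cocompactness supplies a compact set whose $G$-translates cover the space. If $GK = X$ with $K \subseteq X$ compact, then $f^{-1}(K) \subseteq Y$ is compact by properness of $f$; for any $y \in Y$ we can write $f(y) = gk$ with $g \in G$, $k \in K$, whence $f(g^{-1}y) = k$ and so $g^{-1}y \in f^{-1}(K)$, giving $G \cdot f^{-1}(K) = Y$. Conversely, if $GK' = Y$ with $K' \subseteq Y$ compact, then $f(K') \subseteq X$ is compact by continuity; by surjectivity any $x \in X$ equals $f(y)$ for some $y$, and writing $y = gk'$ yields $x = g \cdot f(k') \in G \cdot f(K')$.

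I do not anticipate a genuine obstacle: the argument is a formal bookkeeping exercise involving compacts, equivariance, and the two mapping properties of $f$. The only point requiring care is the use of surjectivity in the direction $Y$-proper $\Rightarrow$ $X$-proper of part $(1)$, where one must produce a witness $y \in f^{-1}(K)$ from the given witness $x \in K$; everywhere else the direction of travel of compact sets is dictated mechanically by whether one needs continuity or properness of $f$.
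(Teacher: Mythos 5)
Your proposal is correct and follows essentially the same route as the paper's proof: both directions of (1) are handled by the identical containments of "transporter" sets (pushing compacts forward via continuity, pulling them back via properness, lifting points via surjectivity), and (2) transfers the compact fundamental set across $f$ in exactly the same way. No gaps.
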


\begin{proof}
    (1) Suppose $G$ acts properly on $Y$. Let $K$ be a compact set in $X$. Since $f$ is proper, $f^{-1}(K)$ is compact in $Y$. Suppose $gK \cap K \neq \phi$ for some $g \in G$. Thus, there exists $x \in K$ such that $gx \in K$. Since $f$ is surjective, there exists $y \in Y$ such that $f(y)=x$. By $G$-equivariance of $f$, we have that $f(gy)=gf(y)=gx \in K$. This implies that $y,gy \in f^{-1}(K)$ and therfore $gf^{-1}(K) \cap f^{-1}(K) \neq \phi$. Thus, $N:=\{g \in G: gK \cap K \neq \phi\} \subseteq M:=\{g \in G: gf^{-1}(K) \cap f^{-1}(K) \neq \phi\}$. Since $M$ is relatively compact in $G$, $N$ is relatively compact in $G$. Hence, $G$ acts properly on $X$. Conversely, suppose $G$ acts properly on $X$. Let $L$ be a compact set in $Y$. Thus, $f(L)$ is compact in $X$. Suppose $gL \cap L \neq \phi$ for some $g \in G$. Since, $gf(L)=f(gL)$ and $f(gL) \cap f(L) \neq \phi$, $gf(L) \cap f(L) \neq \phi$. This implies that $N':=\{g \in G: gL \cap L \neq \phi\} \subseteq M':=\{g \in G: gf(L) \cap f(L) \neq \phi\}$. Since $M'$ is relatively compact in $G$, $N'$ is relatively compact in $G$. Hence, $G$ acts properly on $Y$.

    (2) Suppose $G$ acts cocompactly on $X$. Then, there exists a compact set $K\subseteq X$ such that $GK=X$. 
    Let $y \in Y$. 
    Then, there exist $g \in G$ and $k \in K$ such that $f(y)=gk$. This implies that $k=g^{-1}f(y) =f(g^{-1}y).$ This further implies that $g^{-1}y \in f^{-1}(K)$ and thus $y \in gf^{-1}(K)$. Therefore, $Y= Gf^{-1}(K)$, and hence, $G$ acts cocompactly on $Y$. Conversely, suppose $G$ acts cocompactly on $Y$. Then, there exists a compact set $K\subseteq Y$ such that $GK=Y$. 
    By doing a similar computation as above, we see that $Gf(K)=X$. Hence, $G$ acts cocompactly on $X$.
\end{proof}

The idea of the proof of the following proposition is borrowed from \cite{bowditch-cgnce}.

\begin{lemma}\label{uc action non-discrete}
    Suppose a topological group $G$ acts properly, cocompactly, and vertex transitively on a locally finite hyperbolic graph $X$. Then, $G$ acts as a uniform convergence group on the Gromov boundary $\partial X$ of $X$.
\end{lemma}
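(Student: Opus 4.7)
The strategy is to apply Lemma \ref{proper surjective} by constructing an auxiliary $G$-space $Y$ that admits proper $G$-equivariant surjections onto both $V(X)$ and $\theta(\partial X)$. The construction is built around the classical ``coarse barycenter'' of ideal triangles in a hyperbolic graph. Fix $\delta$ such that $X$ is $\delta$-hyperbolic. For a triple $T=\{\xi_1,\xi_2,\xi_3\}\in\theta(\partial X)$, let $\gamma_{ij}$ denote a bi-infinite geodesic joining $\xi_i$ to $\xi_j$, and set
\[
C(T) := \{v\in V(X) : d(v,\gamma_{ij})\le\delta \text{ for all } i\neq j\}.
\]
Standard thin ideal-triangle arguments (appealing to Theorem \ref{stability of quasigeodesics} to control the ambiguity in the choice of $\gamma_{ij}$) show that $C(T)$ is nonempty and has diameter bounded by some $D=D(\delta)$; local finiteness of $X$ then forces $C(T)$ to be a finite set. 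Since $G$ acts by isometries, the assignment $T\mapsto C(T)$ is $G$-equivariant.

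Next, I would define
\[
Y := \{(T,v)\in\theta(\partial X)\times V(X) : v\in C(T)\},
\]
equipped with the subspace topology (where $V(X)$ carries the discrete topology). The coordinate projections $p_1\colon Y\to\theta(\partial X)$ and $p_2\colon Y\to V(X)$ are continuous and $G$-equivariant. The map $p_1$ is surjective by the definition of $Y$ and has finite fibers, hence is proper. For $p_2$, surjectivity follows from vertex transitivity combined with the fact that $p_2(Y)$ is a nonempty $G$-invariant subset of $V(X)$; nonemptiness holds as soon as $|\partial X|\ge 3$, which we may assume since otherwise $\theta(\partial X)=\emptyset$ and the conclusion is vacuous. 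Properness of $p_2$ reduces, since $V(X)$ is discrete, to the claim that for each $v\in V(X)$ the slice $\{T\in\theta(\partial X) : v\in C(T)\}$ is compact in $\theta(\partial X)$: this slice consists of triples whose pairwise Gromov products based at $v$ are bounded by a constant depending only on $\delta$, and such a set is precompact in the visual topology on $\theta(\partial X)$.

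With both $p_1$ and $p_2$ continuous, proper, surjective and $G$-equivariant, and with the $G$-action on $V(X)$ proper and cocompact by hypothesis, applying Lemma \ref{proper surjective} to $p_2$ transfers these properties to $Y$, and a second application to $p_1$ transfers them to $\theta(\partial X)$; this is exactly the uniform convergence group condition. The main obstacle lies in establishing properness of $p_2$: one must translate the metric coarse centrality condition into boundedness of Gromov products and then identify the resulting collection of triples with a precompact subset of $\theta(\partial X)$ in the visual topology, where local finiteness of $X$ and $\delta$-hyperbolicity play the essential roles.
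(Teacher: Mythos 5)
Your proposal follows essentially the same route as the paper: both build the coarse-barycenter incidence space $Y\subset X\times\theta(\partial X)$ (the paper's $Y$ is exactly your set with the roles of the factors swapped), verify that the two projections are proper, surjective and $G$-equivariant, and then apply Lemma \ref{proper surjective} twice to transfer properness and cocompactness from $X$ to $\theta(\partial X)$. The only imprecision is the claim that $p_1$ is proper because it ``has finite fibers'' — finite fibers alone do not imply properness; you also need that for a compact $K\subset\theta(\partial X)$ the union $\bigcup_{T\in K}C(T)$ is finite (which follows from the uniform bound on Gromov products for triples in $K$, i.e.\ the same estimate you already invoke for $p_2$), but this is a routine repair rather than a genuine gap.
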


\begin{proof}
    Let $X$ be $\delta$-hyperbolic and $\theta(\partial X)$ be the set of distinct unordered triples of $\partial X$. Let $Y \subseteq X \times \theta(\partial X)$ be the subset of pairs, $(a,\{x_{1}, x_{2}, x_{3}\})$ such that there exist bi-infinite geodesics, $\alpha_{1}, \alpha_{2}, \alpha_{3}$, with $\alpha_{i}$ connecting $x_{i}$ to $x_{i+1}$, with subscripts mod $3$, such that $d(a,\alpha_{i}) \leq k$ for some $k=k(\delta)$ and  $i \in \{1,2,3\}$ (see \cite[Exercise 11.86 and Corollary 11.89]{kapovich-drutu-book} for the existence of such $a$). We equip $X\times\theta(\partial X)$ with the product topology. Then, we see that $Y$ is a closed subset of $X \times \theta(\partial X)$. Moreover, since $G$ acts transitively on the vertices of $X$, up to increasing $k$ by $1$, the natural projections of $Y$ to $X$ and to $\theta(\partial X)$ are proper, surjective, and $G$-equivariant. Since $G$ acts properly and cocompactly on $X$, by Lemma \ref{proper surjective}, it follows that $G$ acts properly and cocompactly on $Y$. Again, using Lemma \ref{proper surjective}, this in turn implies that $G$ acts properly and cocompactly on $\theta(\partial X)$. Hence the lemma.
\end{proof}
We are now ready to prove the following.

\begin{proposition}\label{prop:uniform-convergence-action}
A hyperbolic TDLC group acts as a uniform convergence group on its Gromov boundary.
 \end{proposition}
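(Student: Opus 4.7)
The plan is to apply Lemma \ref{uc action non-discrete} to the natural action of the hyperbolic TDLC group $G$ on its Cayley-Abels graph $\Gamma_G$, using the fact that $\partial G$ is by definition $\partial \Gamma_G$.

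First, I would take a Cayley-Abels graph $\Gamma_G$ of $G$, which exists by Theorem \ref{theorem:existenc-qi-invariance-cayley-abel-graph}(1) since $G$ is compactly generated. By the definition of a Cayley-Abels graph, $\Gamma_G$ is locally finite and $G$ acts on it vertex transitively with compact open vertex stabilizers. Moreover, $\Gamma_G$ is hyperbolic because $G$ is a hyperbolic TDLC group.

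Next, I would verify that the action of $G$ on $\Gamma_G$ is proper and cocompact. For cocompactness, since the action is vertex transitive and each edge has unit length, a single closed edge forms a compact fundamental domain. For properness, given a compact set $K \subset \Gamma_G$, local finiteness implies $K$ is contained in some closed ball $B_R(v)$ around a vertex $v$. Then $\{g \in G : gK \cap K \neq \emptyset\}$ is contained in the set of elements that map $v$ to a vertex within distance $2R$ of $v$; there are only finitely many such vertices by local finiteness, so this set is contained in a finite union of left cosets of the vertex stabilizer $G_v$, which is compact. Hence the set is relatively compact.

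Finally, having verified all the hypotheses, Lemma \ref{uc action non-discrete} directly yields that $G$ acts as a uniform convergence group on $\partial \Gamma_G$. Since $\partial G := \partial \Gamma_G$ by definition of the Gromov boundary of a hyperbolic TDLC group, the proposition follows. The main work has already been absorbed into the preceding lemmas (in particular the identification of the auxiliary space $Y \subset X \times \theta(\partial X)$ and the transfer of properness/cocompactness via the proper surjective $G$-equivariant projections in Lemma \ref{proper surjective}), so no additional obstacle is expected here; the step is essentially a verification of hypotheses.
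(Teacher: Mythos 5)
Your proof is correct and follows essentially the same route as the paper: the paper simply cites an external lemma for properness and cocompactness of the $G$-action on $\Gamma_G$ and then invokes Lemma \ref{uc action non-discrete}, whereas you verify properness and cocompactness directly (your properness argument via local finiteness and compact open vertex stabilizers is fine). One small imprecision: a single closed edge need not be a fundamental domain, since vertex transitivity does not imply edge transitivity; take instead the closed star of a vertex, which is compact by local finiteness and whose $G$-translates cover $\Gamma_G$ by vertex transitivity.
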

 \begin{proof}
    Let $G$ be a hyperbolic TDLC group with Cayley-Abels graph $X$. By \cite[Lemma 3.3]{quasiactions}, the action of $G$ on $X$ is proper and cocompact. Hence, by Lemma \ref{uc action non-discrete}, $G$ acts on the Gromov boundary $\partial X$ as a uniform convergence group.
 \end{proof}
 
\begin{definition}
    [Conical limit point] \label{defn:conical-limit-point-1}Let $G$ be a TDLC group acting as a convergence group on a compact metrizable space $X$. A point $x\in X$ is said to be {\em conical} if there exists a sequence $\{g_n\}_n$ in $G$ and two points $y\neq z$ in $X$ such that $g_nx\to y$ and $g_nx'\to z$ for all $x'\neq x$.
\end{definition}
The following is a characterisation of conical limit points in terms of uniform convergence groups.
\begin{theorem}\cite[Theorem A(A$_2$)]{mathieu-dennis-locally-compact-convergence}\label{theorem:uniform-convergence-charaterisation}
    Let $G$ be a TDLC group acting on a compact metrizable space $X$ as a convergence group. Then, $G$ acts on $X$ as a uniform convergence group if and only if every point of $X$ is conical. 
\end{theorem}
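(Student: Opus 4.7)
The plan is to adapt Bowditch's proof of the analogous discrete result to the TDLC setting. The central technical tool I would use throughout is the \emph{collapsing lemma} for convergence actions: any sequence $\{g_n\}$ in $G$ with no relatively compact subsequence admits, after extraction, points $p, q \in X$ (possibly equal) such that $g_n x \to p$ for every $x \in X \setminus \{q\}$, uniformly on compacta of $X \setminus \{q\}$, with the symmetric statement for $\{g_n^{-1}\}$ swapping the roles of $p$ and $q$. In the TDLC setting this follows from properness of the $G$-action on $\theta(X)$ together with second countability of $G$; the substitute for the discrete ``escape to infinity'' is ``no relatively compact subsequence.''

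For the forward implication, I would fix $x \in X$ (reducing to the non-trivial case $|X| \geq 3$, and handling an isolated $x$ separately via its open stabilizer). Pick sequences $a_n, b_n \in X \setminus \{x\}$ with $a_n \neq b_n$ and $a_n, b_n \to x$. The triples $(x, a_n, b_n)$ leave every compact subset of $\theta(X)$, so cocompactness furnishes $g_n \in G$ with $g_n(x, a_n, b_n)$ lying in a fixed compact set of $\theta(X)$; after extraction, $g_n x \to y$, $g_n a_n \to z$, $g_n b_n \to w$ with $y, z, w$ pairwise distinct. Then $\{g_n\}$ can have no relatively compact subsequence, so the collapsing lemma provides an exceptional point $q$. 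Distinctness of the three limits forces $q = x$, since otherwise $a_n, b_n \to x$ would eventually lie in a compact subset of $X \setminus \{q\}$ and $g_n a_n, g_n b_n$ would share the common attracting limit $p$. Hence $g_n x' \to p$ for every $x' \neq x$ while $g_n x \to y \neq p$, exhibiting $x$ as conical.

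For the reverse implication, I would argue by contradiction: assume every point of $X$ is conical but the action on $\theta(X)$ is not cocompact, producing triples $(x_n, y_n, z_n)$ leaving every compact subset of $\theta(X)$. Compactness and metrizability of $X$ give $x_n \to x$, $y_n \to y$, $z_n \to z$ after extraction, and escape from compacta in $\theta(X)$ forces at least two of $x, y, z$ to agree; say $x = y$. Conicality at $x$ yields $\{h_m\} \subset G$ and $p \neq q$ with $h_m x \to p$ and $h_m x' \to q$ for $x' \neq x$. A diagonal argument exploiting uniformity of the collapsing convergence on compacta of $X \setminus \{x\}$ lets me choose $m = m(n)$ so that $h_{m(n)}(x_n, y_n, z_n)$ converges to a triple of pairwise distinct points in $X$, hence lies eventually in a compact subset of $\theta(X)$, contradicting the assumed escape.

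The hard part will be the diagonal step in the reverse direction: I need to balance the rate at which $x_n, y_n \to x$ against the rate at which $h_m$ separates the exceptional point $x$ from its complement, ensuring that all three coordinates of the triple land in a common compact region of $\theta(X)$ rather than collapsing. A secondary TDLC-specific subtlety is the consistent use of ``no relatively compact subsequence'' in place of the discrete ``no constant subsequence'' inside the collapsing lemma; second countability of $G$ is what makes the sequential extractions go through.
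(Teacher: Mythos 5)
First, a point of order: the paper does not prove this statement at all --- it is quoted verbatim as Theorem A(A$_2$) of Carette--Dreesen \cite{mathieu-dennis-locally-compact-convergence}, who in turn adapt Bowditch and Tukia to the locally compact setting. So there is no in-paper proof to compare yours against; what follows is an assessment of your sketch on its own terms.

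Your forward implication is essentially the standard argument and is nearly complete, but the final assertion $y\neq p$ is not justified by what precedes it: with both auxiliary points $a_n,b_n\to x$, nothing rules out $g_nx\to p$ as well, since $a_n$ and $b_n$ approach the exceptional point $q=x$ and so escape the locally uniform collapsing. The standard fix is to take one of the two auxiliary points \emph{fixed}, say $b\neq x$: the triples $(x,a_n,b)$ still leave every compact subset of $\theta(X)$, and then $g_nb\to p$ forces $w=p$, whence $y\neq p$ by distinctness of the limit triple. With that modification this direction is fine, and your replacement of ``no constant subsequence'' by ``no relatively compact subsequence'' is the correct TDLC adjustment.

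The reverse implication is where the genuine gap lies, and it is not merely a matter of tuning rates in a diagonal argument --- the diagonal step as described actually fails. Take the concrete degenerate scenario $x_n\equiv x$, $y_n\to x$ with $y_n\neq x$, and $z_n\to z\neq x$. Conicality at $x$ (upgraded, via the collapsing lemma, to $h_m\to b$ locally uniformly on $X\setminus\{x\}$ and $h_mx\to a\neq b$) gives you control of exactly two limits: the orbit of the single point $x$, and the orbit of everything else. For each fixed $n$, letting $m\to\infty$ sends $h_mx_n=h_mx$ near $a$ but sends \emph{both} $h_my_n$ and $h_mz_n$ near $b$, so the image triple degenerates to $(a,b,b)$; choosing $m(n)$ small instead collapses $h_{m(n)}y_n$ onto $h_{m(n)}x$. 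There is no choice of $m(n)$ for which conicality of $x$ alone separates $y_n$ simultaneously from $x$ and from $z_n$: you need a sequence of group elements adapted to the pair $(x_n,y_n)$, not to the point $x$. This is precisely the content of Tukia's theorem (the hard half of the characterisation), whose proof requires a further bootstrapping argument on the degeneracy type of the limiting triple rather than a single application of conicality; Carette--Dreesen carry out the locally compact version. As written, your reverse direction restates the difficulty rather than resolving it.
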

\subsection{Cannon-Thurston maps}\label{subsection:CT maps}
Suppose $f:X\to Y$ is a qi embedding between Gromov hyperbolic spaces $X$ and $Y$. Then, by \cite[Theorem 3.9, III.H]{bridson-haefliger}, $f$ gives rise to a continuous map from the Gromov boundary of $X$ to the Gromov boundary of $Y$. It is natural to ask if non-qi embeddings could also induce continuous maps between Gromov boundaries of hyperbolic spaces. This leads to the notion of Cannon-Thurston (CT) maps. Let $X$ be a hyperbolic space and $Y$ be a subspace of $X$ which is hyperbolic with respect to the induced length metric from $X$. We say that the pair $(Y,X)$ admits a {\em Cannon-Thurston map} if the inclusion $Y\to X$ extends to a continuous map from the Gromov boundary of $Y$ to the Gromov boundary of $X$.

A sufficient condition for the existence of a Cannon-Thurston map is given in the form of the following lemma \cite[Lemma 2.1]{mitra-trees}, which is popularly known as Mitra's criterion for the existence of CT maps.
\begin{lemma}[Mitra's criterion]\label{lemma:mitra-criterion}
    A Cannon-Thurston map exists for the pair $(Y,X)$ if the following condition is satisfied: given $y_{0} \in Y$, there exists a non-negative function $M(N)$ such that $M(N) \to \infty$ as $N \to \infty$ and for all geodesic segments $\lambda$ lying outside an $N$-radius ball around $y_{0}$ in $Y$ any geodesic segment in $X$ joining the endpoints of $\lambda$ lies outside the $M(N)$-radius ball around $y_{0}$ in $X$.
\end{lemma}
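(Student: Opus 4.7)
The plan is to construct the extension $\bar{i}: \partial Y \to \partial X$ directly and verify continuity via the Gromov-product characterisation of convergence to the boundary. First, fix $y_0 \in Y$. Given $\xi \in \partial Y$, choose a geodesic ray $\gamma$ in $Y$ starting at $y_0$ with $[\gamma] = \xi$ and set $x_n := \gamma(n) \in Y \subseteq X$. To define $\bar{i}(\xi)$, I will show that $\{x_n\}$ converges in $\overline{X}$, which amounts to showing that $(x_n \cdot x_m)_{y_0}^X \to \infty$ as $n, m \to \infty$.

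The key step uses the hypothesis. For $n \leq m$ sufficiently large, the subsegment $\gamma|_{[n,m]}$ is a $Y$-geodesic from $x_n$ to $x_m$ lying outside the $n$-ball around $y_0$ in $Y$. By hypothesis, any geodesic in $X$ joining $x_n$ and $x_m$ lies outside the $M(n)$-ball around $y_0$ in $X$. In any $\delta$-hyperbolic geodesic space, the Gromov product $(x_n \cdot x_m)_{y_0}$ agrees, up to an additive constant depending only on $\delta$, with the distance from $y_0$ to a geodesic joining $x_n$ and $x_m$. Hence $(x_n \cdot x_m)_{y_0}^X \geq M(n) - C_X \to \infty$, so $\{x_n\}$ converges in $\overline{X}$ to a point which I take to be $\bar{i}(\xi) \in \partial X$.

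Next I would verify that $\bar{i}(\xi)$ is independent of the chosen ray: any two geodesic rays in the hyperbolic space $Y$ based at $y_0$ and representing $\xi$ remain at uniformly bounded Hausdorff distance (an application of Theorem \ref{stability of quasigeodesics}), so the corresponding sequences in $X$ stay at bounded $X$-distance and therefore determine the same point of $\partial X$. Combining with $i$ on $Y$ yields a well-defined map $\overline{Y} \to \overline{X}$ extending $i$.

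Finally, for continuity of the extension at $\xi \in \partial Y$, take $z_k \to \xi$ in $\overline{Y}$. Choosing (quasi)geodesic rays or segments in $Y$ from $y_0$ to $z_k$ and to $\xi$, hyperbolicity of $Y$ implies that appropriate initial subsegments fellow-travel up to a time $N_k \to \infty$. Applying the hypothesis to these long common subsegments (which are pushed far from $y_0$ in $Y$) and invoking the same Gromov-product estimate in $X$ gives $(\bar{i}(z_k) \cdot \bar{i}(\xi))_{y_0}^X \geq M(N_k) - C_X \to \infty$, so $\bar{i}(z_k) \to \bar{i}(\xi)$ in $\overline{X}$. The main technical point throughout is to pass cleanly between the Gromov-product formulation (which encodes convergence to the boundary) and the distance-to-geodesic formulation (on which the hypothesis is phrased) in both $Y$ and $X$; this is handled by stability of quasigeodesics.
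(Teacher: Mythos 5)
The paper does not prove this lemma: it is quoted from Mitra \cite[Lemma 2.1]{mitra-trees} and used as a black box, so there is no in-paper argument to compare against. Your proposal reconstructs Mitra's original argument, and the overall strategy --- define $\bar i(\xi)$ via a geodesic ray from $y_0$, use the hypothesis together with the fact that $(x_n\cdot x_m)_{y_0}$ agrees up to $O(\delta)$ with $d(y_0,[x_n,x_m])$ to force the $X$-Gromov products to infinity, check independence of the chosen ray, and prove continuity by the same Gromov-product estimate --- is correct and is essentially the standard proof. (Implicitly you are using that the spaces are proper geodesic, so that every boundary point is represented by a ray from $y_0$ and the sequential and geodesic boundaries agree; that is harmless in this paper, where everything is a locally finite graph.)

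One step is misstated. In the continuity argument you propose to apply the hypothesis to the ``long common subsegments'' of the rays to $z_k$ and to $\xi$, describing them as ``pushed far from $y_0$ in $Y$''. Those are \emph{initial} subsegments: they contain $y_0$, so they are not far from $y_0$ and the hypothesis says nothing about them. The segment to which the hypothesis must be applied is a $Y$-geodesic $\lambda_k$ joining a point far along the ray to $z_k$ (or $z_k$ itself, if $z_k\in Y$) to a point far along the ray to $\xi$: since $(z_k\cdot\xi)^{Y}_{y_0}\to\infty$, thin-triangle estimates show that $\lambda_k$ lies outside the ball of radius $N_k\approx (z_k\cdot\xi)^{Y}_{y_0}-2\delta\to\infty$ around $y_0$ in $Y$, and the hypothesis then pushes any $X$-geodesic with the same endpoints outside the $M(N_k)$-ball, giving $(\bar i(z_k)\cdot\bar i(\xi))^{X}_{y_0}\geq M(N_k)-C_X\to\infty$. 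With that correction the argument is complete and coincides with the proof in \cite{mitra-trees}.
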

Now, we define Cannon-Thurston maps in the context of hyperbolic TDLC groups. Let $G$ be a hyperbolic TDLC group and $H$ be an open hyperbolic TDLC subgroup of $G$. Let $\iota:\Gamma_H\to\Gamma_G$ be the embedding between Cayley-Abels graphs of $H$ and $G$.
\begin{definition}
    We say that the pair $(H,G)$ admits a {\em Cannon-Thurston map} if the map $\iota$ extends to a continuous map between the Gromov boundaries of $H$ and $G$.
\end{definition}
In the above definition, if $H$ is compact then its Gromov boundary is empty, and thus the CT map for the pair $(H,G)$ always exists. Also, if $1\to H\to G\to Q\to 1$ is a short exact sequence of compactly generated TDLC groups such that $H$ is compact and $Q$ is hyperbolic, then, by Remark \ref{remark:compact-extension}, $G$ is hyperbolic and the pair $(H,G)$ admits a CT map.
\section{Quasiconvex subgroups of hyperbolic TDLC groups and their limit sets}\label{section:quasiconvexity}
In \cite{gromov-hypgps}, Gromov introduced the notion of quasiconvex subsets of a hyperbolic metric space. A subset $Y$ of a geodesic metric space $Z$ is said to be {\em quasiconvex} if there exists a constant $D\geq 0$ such that geodesics in $Z$ joining a pair of elements of $Y$ lie in $N_D(Y)$. Motivated by that, we define the notion of quasiconvexity for TDLC groups.
\begin{definition}[Quasiconvexity]\label{defn:quasiconvex-subgroup}
Let $H$ be a subgroup of a hyperbolic TDLC group $G$ with Cayley-Abels graph $X$. Then, $H$ is said to be {\em quasiconvex} if some $H$-orbit of a vertex of $X$ is quasiconvex.
\end{definition}
By Remark \ref{rem:canonical-cayley-abel-graph}, we can take $X$ to be equal to $\Gamma_G$ as constructed in Subsection \ref{subsection:2.2}. In general, like in the discrete setting, quasiconvexity of a subgroup depends on the chosen Cayley-Abels graph. However, in the presence of hyperbolicity, the following lemma shows that this choice is irrelevant.

\begin{lemma}\label{lemma:qc-independence}
    We have the following:
    \begin{enumerate}
    \item Definition \ref{defn:quasiconvex-subgroup} does not depend on the choice of an orbit of $H$.
        
        \item Quasiconvexity of H does not depend on the choice of a Cayley-Abels graph.
        
    \end{enumerate}
\end{lemma}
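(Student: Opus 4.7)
For part (1), I would exploit that $G$ acts on $\Gamma_G$ by graph automorphisms (the edge relation $gU \sim gaU$ is preserved under left multiplication), hence by isometries: if $v_1, v_2 \in V(\Gamma_G)$ and $h \in H$, then $d(hv_1, hv_2) = d(v_1, v_2) =: d$. Thus the orbits $Hv_1$ and $Hv_2$ are at Hausdorff distance at most $d$. To transfer $D$-quasiconvexity of $Hv_1$ to $Hv_2$, I fix $h_1, h_2 \in H$ and form the geodesic quadrilateral with vertices $h_1v_2, h_1v_1, h_2v_1, h_2v_2$. In a $\delta$-hyperbolic space such a quadrilateral is $2\delta$-slim, so the geodesic $[h_1v_2, h_2v_2]$ lies in the $2\delta$-neighborhood of the union of the other three sides: the two short connecting sides $[h_iv_2, h_iv_1]$ (of length $d$, hence in $N_d(Hv_2)$) and the side $[h_1v_1, h_2v_1]$, which lies in $N_D(Hv_1) \subseteq N_{D+d}(Hv_2)$ by the quasiconvexity of $Hv_1$. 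Consequently $[h_1v_2, h_2v_2] \subseteq N_{D+d+2\delta}(Hv_2)$, establishing quasiconvexity of $Hv_2$.

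For part (2), let $X_i = \Gamma_G(K_i, U_i, A_i)$, $i=1,2$, be two Cayley-Abels graphs of $G$. I plan to construct a coarsely $G$-equivariant quasi-isometry $\phi : V(X_1) \to V(X_2)$ by setting $\phi(gU_1) := gU_2$ after choosing a section of $G \to G/U_1$. This is well-defined up to uniformly bounded ambiguity: applying Lemma \ref{lemma:cosets of cpt set bounded} to the compact set $U_1 \subseteq G$ acting through the orbit map into $X_2$ yields a constant $c'$ with $d_{X_2}(U_2, uU_2) \leq c'$ for every $u \in U_1$, so different lifts of the same coset $gU_1$ differ by a bounded amount in $X_2$. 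A step-by-step estimate, using that $A_1$ is finite and applying Lemma \ref{lemma:cosets of cpt set bounded} to $A_1 \subseteq G$ acting on $X_2$, then shows that $\phi$ is a quasi-isometric embedding; since $G$ acts transitively on $V(X_2)$, $\phi$ is coarsely surjective, hence a quasi-isometry. By construction $\phi(hgU_1) = hgU_2 = h\phi(gU_1)$, so $\phi$ is $G$-equivariant.

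With $\phi$ in hand, if $Hv_1 \subseteq V(X_1)$ is $D$-quasiconvex then $\phi(Hv_1) = H\phi(v_1)$ is an $H$-orbit in $X_2$, and I claim it is quasiconvex. Indeed, any geodesic in $X_2$ joining two of its points is, by stability of quasi-geodesics (Theorem \ref{stability of quasigeodesics}), at bounded Hausdorff distance from the $\phi$-image of a geodesic in $X_1$ between their preimages, since $\phi$ sends geodesics to quasi-geodesics. As that $X_1$-geodesic lies in $N_D(Hv_1)$, its $\phi$-image lies in a bounded neighborhood of $\phi(Hv_1)$, giving quasiconvexity of $H\phi(v_1)$ in $X_2$. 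Combined with part (1), this shows that every $H$-orbit in $X_2$ is quasiconvex whenever some $H$-orbit in $X_1$ is. The main obstacle is the bounded-error bookkeeping in part (2): the naive formula $gU_1 \mapsto gU_2$ is not literally a function on vertex sets when $U_1 \neq U_2$, and Lemma \ref{lemma:cosets of cpt set bounded} is the decisive tool for controlling these errors uniformly.
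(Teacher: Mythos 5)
Your proof is correct and follows essentially the same route as the paper: part (1) is the standard thinness/stability argument transferring quasiconvexity between orbits at bounded Hausdorff distance, and part (2) uses the canonical quasi-isometry $gU_1\mapsto gU_2$ between Cayley-Abels graphs together with stability of quasigeodesics. The only cosmetic differences are that you reconstruct that quasi-isometry from Lemma \ref{lemma:cosets of cpt set bounded} where the paper simply cites Kr\"{o}n--M\"{o}ller, and you push the known-quasiconvex orbit forward rather than pulling the geodesic back.
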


\begin{proof}
    (1) Suppose $Hv$ is $D$-quasiconvex in $X$ for some vertex $v\in V(X)$ and $D\geq 0$. Let $Hv'$ be another orbit of $H$ for some vertex $v\neq v'\in V(X)$. Let $\alpha$ be a geodesic in $X$ joining $h_{1}v'$ and $h_{2}v'$ for some $h_{1},h_{2} \in H$. Since $d_X(h_iv,h_iv')=d_X(v,v')=L$ (say), the Hausdorff distance of $Hv$ and $Hv'$ is at most $L$. If $\beta$ be a geodesic joining $h_{1}v$ and $h_{2}v$ in $X$, then, by Theorem \ref{stability of quasigeodesics},  the Hausdorff distance of $\alpha$ and $\beta$ is at most $L'$ for some $L'=L'(\delta,L)$, where $\delta$ is a hyperbolicity constant of $X$. Since $\beta$ lies in a $D$-neighborhood of $X$, $\alpha$ lies in a $(D+L+L')$-neighborhood of $Y$. Hence, $Hv'$ is quasiconvex in $X$.

    (2) Let $\Gamma_{G}= \Gamma_{G}(K,U,A)$ be a Cayley-Abels graph of $G$ and let $\{hU: h \in H\}$ be $D$-quasiconvex in $\Gamma_{G}$. Let $\Gamma'_{G}= \Gamma_{G}(K',U',A')$ be another Cayley-Abels graph of $G$. Let $\alpha$ be a geodesic in $\Gamma_{G}'$ joining $hU'$ and $h'U'$ for some $h,h' \in H$. By Theorem \ref{theorem:existenc-qi-invariance-cayley-abel-graph}(2), we know that $\Gamma_{G}$ and $\Gamma_{G}'$ are quasiisometric. Let $\psi$ be the quasiisometry from $\Gamma'_{G}$ to $\Gamma_{G}$ which is defined on the vertex sets of $\Gamma_{G}$ and $\Gamma_{G}'$ by $\psi(gU'):=gU$ (see \cite[Theorem 2.7$^{+}$]{kron-moller}). Therefore, $\psi(\alpha)$ is a quasigeodesic in $\Gamma_{G}$ joining $hU$ and $h'U$. Since $\{hU: h \in H\}$ is $D$-quasiconvex in $\Gamma_{G}$, $\psi(\alpha)$ lies in a $D$-neighborhood of $\{hU: h \in H\}$. Then $\alpha$ is in a $D'$-neighborhood of $\{hU': h \in H\}$, where the constant $D'$ depends on $D$ and the parameters of $\psi$. Hence $\{hU: h \in H\}$ is $D'$-quasiconvex in $\Gamma'_{G}$.
\end{proof}
Now, we record a few facts about quasiconvex subgroups of hyperbolic TDLC groups, which are analogous in the discrete setting.

\begin{lemma} \label{lemma:properties-of-qc} Suppose $G$ is a hyperbolic TDLC group and $H$ is an open subgroup of $G$. Then we have the following:
\begin{enumerate}
    \item If $H$ is quasiconvex in $G$, then $H$ is compactly generated.
    \item If $H$ is compactly generated and $\Gamma_{H}$ is quasiisometrically embedded in $\Gamma_{G}$, then $H$ is quasiconvex in $G$.
    \item If $H$ is quasiconvex, then there is a quasiisometric embedding from a Cayley-Abels graph of $H$ to a Cayley-Abels graph of $G$. In particular, $H$ is a hyperbolic TDLC group.
\end{enumerate}
\end{lemma}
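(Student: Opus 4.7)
Throughout, I fix a Cayley-Abels graph $\Gamma_G=\Gamma_G(K,U,A)$ as in Subsection \ref{subsection:2.2}, take the basepoint to be $U\in V(\Gamma_G)$, and by Lemma \ref{lemma:qc-independence} assume that quasiconvexity of $H$ means that the orbit $HU$ is $D$-quasiconvex in $\Gamma_G$ for some $D\geq 0$. The three parts will be handled in sequence, with parts (1) and (2) essentially independent and part (3) combining the techniques. For part (1), the key observation is that an open subgroup of a topological group is automatically closed, so $H$ is closed in $G$. Consider
\[
S:=\{\,h\in H : d_A(U,hU)\leq 2D+1\,\}.
\]
The larger set $\{g\in G : d_A(U,gU)\leq 2D+1\}$ is a finite union of cosets of the compact open subgroup $U$, hence compact in $G$; intersecting with the closed set $H$ shows $S$ is compact. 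To see $S$ generates $H$, pick any $h\in H$ and a geodesic $U=v_0,v_1,\dots,v_n=hU$ in $\Gamma_G$; $D$-quasiconvexity yields elements $h_i\in H$ with $h_0=e$, $h_n=h$, and $d_A(h_iU,v_i)\leq D$, so $d_A(h_iU,h_{i+1}U)\leq 2D+1$, i.e.\ $h_i^{-1}h_{i+1}\in S$, and $h=\prod_{i=0}^{n-1} (h_i^{-1}h_{i+1})$.

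For part (2), I take two vertices $hU_H,h'U_H$ in $\Gamma_H$ and a geodesic $\alpha$ between them; its image $\iota(\alpha)$ is a $(\lambda,\epsilon)$-quasigeodesic in $\Gamma_G$ with endpoints $hU,h'U$ whose vertices all lie in the orbit $HU$. By the stability of quasigeodesics (Theorem \ref{stability of quasigeodesics}), any geodesic in $\Gamma_G$ joining $hU$ to $h'U$ lies within Hausdorff distance $D'=D'(\delta,\lambda,\epsilon)$ of $\iota(\alpha)$, and hence within $D'$ of $HU$. Thus $HU$ is $D'$-quasiconvex and $H$ is quasiconvex.

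For part (3), part (1) supplies a compact generating set for $H$, yielding a Cayley-Abels graph $\Gamma_H=\Gamma_H(K_H,U_H,A_H)$ with $U_H=U\cap H$ and, after enlarging $A$, with $A_H\subseteq A$; the embedding $\iota:\Gamma_H\to\Gamma_G$ of Lemma \ref{lemma:proper-embedding} is then $1$-Lipschitz on vertices. For the reverse inequality, I argue as in part (1): given $h,h'\in H$ with $d_G(hU,h'U)=n$, produce $H$-orbit vertices $h_0U=hU,h_1U,\dots,h_nU=h'U$ with $d_G(h_iU,h_{i+1}U)\leq 2D+1$, so each $h_i^{-1}h_{i+1}$ lies in the compact set $S\subset H$ from above. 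Applying Lemma \ref{lemma:cosets of cpt set bounded} with $K=S$ inside $\Gamma_H$ gives a constant $c$ with $d_{A_H}(U_H,h_i^{-1}h_{i+1}U_H)\leq c$, so $d_{A_H}(hU_H,h'U_H)\leq cn$ and $\iota$ is a $(c,0)$-quasiisometric embedding on vertices. Hyperbolicity of $\Gamma_H$ follows by the standard argument that a geodesic space quasiisometrically embedded in a hyperbolic space is hyperbolic: a geodesic triangle in $\Gamma_H$ maps to a quasigeodesic triangle in $\Gamma_G$ whose sides lie within Hausdorff distance $D'$ of $\delta$-thin genuine geodesic triangles, and this thinness pulls back.

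The main obstacle is less conceptual than bookkeeping: one must juggle three metric worlds ($H$ with its word metric, the vertex set of $\Gamma_H$, and $\Gamma_G$), and at every transition convert boundedness of group elements into boundedness of coset distances via Lemma \ref{lemma:cosets of cpt set bounded}. The structural input that makes part (1) work, and is not present in the discrete case, is that an open subgroup is automatically closed, so intersecting a compact subset of $G$ with $H$ still yields a compact subset of $H$.
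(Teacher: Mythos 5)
Your proposal is correct, and parts (1) and (2) follow essentially the same route as the paper: part (1) is the paper's chain-of-cosets argument (your choice $h_0=e$, $h_n=h$ just makes the extra generator $U\cap H$ that the paper throws in unnecessary, and your observation that ``open $\Rightarrow$ closed, so compact $\cap\, H$ is compact'' is exactly what the paper's ``which is clearly compact'' is silently using), and part (2) is the same stability-of-quasigeodesics argument. Part (3) is where you genuinely diverge. The paper gets the quasiisometric embedding by citing that $\iota$ is a proper embedding (Lemma \ref{lemma:proper-embedding}) and then invoking a general fact from Kapovich--Sardar (a properly embedded subspace with quasiconvex image in a hyperbolic space is qi embedded); you instead prove the lower bound directly, by tracking a $\Gamma_G$-geodesic between $hU$ and $h'U$ through nearby orbit points, observing that consecutive increments $h_i^{-1}h_{i+1}$ land in the compact set $S$ from part (1), and converting compactness into a uniform bound on $d_{A_H}$ via Lemma \ref{lemma:cosets of cpt set bounded} applied inside $H$. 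Your version is more self-contained (no external citation, no appeal to properness) and yields an explicit $(c,0)$-qi embedding, at the cost of rerunning the part (1) bookkeeping; the paper's version is shorter but outsources the key step. Both are valid, and your concluding hyperbolicity step is the same standard pullback the paper cites from Bridson--Haefliger.
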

\begin{proof}
(1) Let $\Gamma_G$ be the Cayley-Abels graph of $G$. Suppose the orbit of $U\in V(\Gamma_G)$ is $D$-quasiconvex in $\Gamma_G$ for some $D\geq 0.$ For $h \in H$, let $\gamma$ be a geodesic in $\Gamma_G$ joining $U$ and $hU$. Let $U,a_{1}U,\cdots,a_{1}\cdots a_{n}U=hU$ be the vertices of $\gamma$. Since $H$ is $D$-quasiconvex, for $i=1,\cdots,n$, there exists $h_{i}'\in H$ such that $d_{\Gamma_{G}}(a_{1} \cdots a_{i}U, h_{i}'U) \leq D$. Now, for $1\leq i\leq n$, we choose a word $b_{i}$ in $\langle A\rangle$ of length at most $D$ so that $h_{i}'U=a_{1}\cdots a_{i}b_{i}U$ (here $b_{n}=1$). This implies that $h_{1}'= a_{1}b_{1}u_{1}$ for some $u_{1} \in U$, and $(h'_{i-1})^{-1}h'_{i}=u_{i-1}^{-1}b_{i-1}^{-1}a_{i}b_{i}u_{i}$ for some $u_{i} \in U$ and $2\leq i\leq n$. Taking $h_{1}=h_{1}'$ and $h_{i}=(h'_{i-1})^{-1}h_{i}'$ for $2\leq i\leq n$,  we see that $hU=h_{1} \cdots h_{n}U$. Note that $h_{i}$ lies in $(2D+1)$-radius ball around $U$ in $\Gamma_{G}$. 
    This implies that $h=h_{1}\cdots h_{n}u$ for some $u \in U$, and therefore $u\in U\cap H$. It follows that $H$ is generated by the union of $B_{2D+1}(U)\cap H$ and $U\cap H$, which is clearly compact. Hence, $H$ is compactly generated.
    
    (2) Suppose $G$ is $\delta$-hyperbolic and $\iota: \Gamma_{H} \to \Gamma_{G}$ is a $(\lambda,\epsilon)$-quasiisometric embedding for some $\delta\geq 0,\lambda\geq 1,\epsilon\geq 0$. Let $v \in V(\Gamma_{H})$, and $\alpha$ be a geodesic joining $h\iota(v)$ and $h'\iota(v)$ in $\Gamma_G$. Let $\beta$ be a geodesic joining $hv$ and $h'v$ in $\Gamma_{H}$. Since $\iota$ is a quasiisometric embedding, $\iota(\beta)$ is a quasigeodesic joining $h\iota(v),h'\iota(v) \in \Gamma_{G}$. By Theorem \ref{stability of quasigeodesics}, there exists $R=R(\delta,\lambda,\epsilon)$ such that the Hausdorff distance between $\alpha$ and $\iota(\beta)$ is at most $R$. This implies that $H$-orbit of $\iota(v)$ is $R$-quasiconvex in $\Gamma_G$. Hence, $H$ is quasiconvex in $G$.
    
    (3) Let $H$ be a quasiconvex subgroup of $G$. By (1), $H$ is compactly generated. Let $\Gamma_H$ and $\Gamma_G$ be the Cayley-Abels graphs of $H$ and $G$, respectively and let $\iota:\Gamma_H\to \Gamma_G$ be the embedding. Since any two Cayley-Abels graphs of a compactly generated TDLC group are quasiisometric (Theorem \ref{theorem:existenc-qi-invariance-cayley-abel-graph}), it is enough to show that $\iota$ is a qi embedding. 
   By Lemma \ref{lemma:proper-embedding}, we see that $\iota$ is a proper embedding. Since $\iota(\Gamma_H)$ is quasiconvex in $\Gamma_G$, by \cite[Lemma 1.93]{kapovich-sardar-book}, $\iota$ is a quasiisometric embedding. Finally, by \cite[Theorem 1.9, III.H]{bridson-haefliger}, it follows that $\Gamma_H$ is hyperbolic, and thus $H$ is hyperbolic. This completes the proof of the lemma.
\end{proof}
    
\begin{lemma}
    A finite index subgroup $H$ of a hyperbolic TDLC group $G$ is compactly generated and quasiconvex.
\end{lemma}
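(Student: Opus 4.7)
The plan is to apply the \v{S}varc--Milnor lemma (Theorem \ref{milnor-svarc-lemma}) to the action of $H$ on a Cayley--Abels graph $\Gamma_G$ of $G$. I take $H$ to be closed (as is standard for subgroups in the TDLC setting); then $H$ is automatically open in $G$, since its complement is a union of finitely many closed cosets, hence closed. Consequently, the stabilizer in $H$ of any vertex of $\Gamma_G$ is $H \cap U$, which is compact and open in $H$, so $H$ is a TDLC group in the subspace topology.

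Next I would verify that $H$ acts properly and cocompactly on $\Gamma_G$. Properness is inherited from $G$, which acts properly on $\Gamma_G$ by \cite[Lemma 3.3]{quasiactions} (as invoked in the proof of Proposition \ref{prop:uniform-convergence-action}): for any compact $K \subseteq \Gamma_G$ the set $\{h \in H : hK \cap K \neq \emptyset\}$ is the intersection of $H$ with the analogous relatively compact subset of $G$, hence is itself relatively compact. For cocompactness, fix a compact fundamental domain $D_0$ for the $G$-action on $\Gamma_G$ together with a finite transversal $\{g_1, \ldots, g_n\}$ for $H$ in $G$; then $\bigcup_{i=1}^{n} g_i D_0$ is compact and its $H$-translates cover $\Gamma_G$.

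Theorem \ref{milnor-svarc-lemma} then guarantees that $H$ is compactly generated and that, for any vertex $v \in V(\Gamma_G)$, the orbit map $h \mapsto hv$ is a quasiisometry from $H$ to $\Gamma_G$. In particular, the orbit $Hv$ is coarsely dense in $\Gamma_G$, so there exists $K \geq 0$ with $N_K(Hv) = \Gamma_G$. Any geodesic in $\Gamma_G$ between two points of $Hv$ lies in $\Gamma_G = N_K(Hv)$, so $Hv$ is $K$-quasiconvex and hence $H$ is a quasiconvex subgroup of $G$ in the sense of Definition \ref{defn:quasiconvex-subgroup}. No step presents a serious obstacle; the only point requiring care is the initial observation that a finite-index closed subgroup of a topological group is automatically open, which is what enables the \v{S}varc--Milnor machinery to be applied directly to $H$.
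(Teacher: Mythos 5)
Your proposal is correct and follows essentially the same route as the paper: restrict the proper, cocompact action of $G$ on a Cayley--Abels graph $\Gamma_G$ to $H$, use the finite-index hypothesis to transfer cocompactness via a finite transversal, and then invoke the \v{S}varc--Milnor lemma (Theorem \ref{milnor-svarc-lemma}) to get compact generation and coarse density of an $H$-orbit, hence quasiconvexity. The only addition is your preliminary remark that a closed finite-index subgroup is automatically open, a point the paper leaves implicit but which is harmless and correctly justified.
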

\begin{proof}
    Let $\Gamma_G$ be the Cayley-Abels graph of $G$. Then, $G$ acts on $\Gamma_{G}$ properly and cocompactly by  \cite[Lemma 3.3]{quasiactions}. We now show that $H$ also acts on $\Gamma_{G}$ properly and cocompactly. Clearly, $H$ acts properly on $\Gamma_G$. Since the action of $G$ on $\Gamma_{G}$ is cocompact, there is some compact set $K\subset \Gamma_{G}$ such that $GK= \Gamma_{G}$. Since $[G:H]<\infty$, for some $n\in\mathbb N$, we can write $G= \bigcup_{i=1}^{n} Hg_{i}$, where $g_{i} \in G.$ Therefore, $\Gamma_{G}= (\bigcup_{i=1}^{n} Hg_{i})K= H(\bigcup_{i=1}^{n} g_{i}K)$. Since each $g_{i}K$ is compact and a finite union of compact sets is compact, $\bigcup_{i=1}^{n} g_{i}K$ is compact. Therefore, the action of $H$ on $\Gamma_{G}$ is compact. Thus, by Theorem \ref{milnor-svarc-lemma}, $H$ is compactly generated and, for any fixed vertex $v$ of $\Gamma_{G}$, the orbit map $H \rightarrow \Gamma_{G}$, $h \mapsto hv$, is a quasiisometry (here $H$ has the word metric coming from the compact generating set). Since the orbit map is a quasiisometry, the orbit $\{hv:h\in H\}$ is coarsely dense in $\Gamma_G$, and hence it is quasiconvex in $\Gamma_G$. Thus, $H$ is quasiconvex in $G$.
\end{proof}
For discrete hyperbolic groups, the following is well-known, see for example \cite{short}. With appropriate modifications, we prove it for TDLC groups. 
\begin{lemma}\label{lemma:intersection-of-qc-is-qc}
    Let $G$ be a hyperbolic TDLC group. Then the intersection of finitely many open quasiconvex subgroups is open quasiconvex. 
\end{lemma}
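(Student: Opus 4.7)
By induction on the number of subgroups it suffices to treat the intersection $H := H_1 \cap H_2$ of two open quasiconvex subgroups, which is automatically open. My plan is to verify the hypothesis of Lemma \ref{lemma:properties-of-qc}(2), namely to exhibit a compact generating set for $H$ together with a quasiisometric embedding $\iota : \Gamma_H \hookrightarrow \Gamma_G$, by adapting Short's classical discrete argument to the Cayley-Abels setting.

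Since $H$ is open, van Dantzig's theorem supplies a compact open subgroup $U$ of $G$ with $U \subseteq H$; I would build $\Gamma_G = \Gamma_G(K, U, A)$ with respect to this $U$ and invoke Lemma \ref{lemma:qc-independence} to fix a common quasiconvexity constant $D$ for $H_1$ and $H_2$. Given $h \in H$, I would take a geodesic $\gamma$ in $\Gamma_G$ from $U$ to $hU$ with vertices $v_0,\dots,v_n$, and for each $i$ choose $a_i \in H_1$, $b_i \in H_2$ with $d(v_i, a_iU),\ d(v_i, b_iU) \le D$, normalised so that $a_0 = b_0 = 1$ and $a_n = b_n = h$. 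The ``offsets'' $w_i := a_i^{-1} b_i U$ then lie in the finite set $B_{2D}(U) \subseteq V(\Gamma_G)$ of some cardinality $N$, with $w_0 = w_n = U$. The key identity, made available by $U \subseteq H$, is that $w_i = w_j$ forces $a_i^{-1} b_i = a_j^{-1} b_j u$ for some $u \in U \subseteq H_2$, whence $a_j a_i^{-1} = b_j u b_i^{-1} \in H_1 \cap H_2 = H$, with $d_{\Gamma_G}(U, a_j a_i^{-1} U) \le 2D + (j-i)$.

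Next, a pigeonhole argument on the $(n+1)$-term sequence $(w_i)$ in the $N$-element set $B_{2D}(U)$ (along the lines of Short's original discrete argument) should yield a chain $0 = i_0 < i_1 < \cdots < i_r = n$ with $w_{i_k} = w_{i_{k+1}}$ for each $k$ and $i_{k+1} - i_k \le M$ for a constant $M = M(N)$. Telescoping then gives
\[
h \;=\; \prod_{k=0}^{r-1} a_{i_{k+1}} a_{i_k}^{-1},
\]
each factor in $H$ of $\Gamma_G$-length at most $R := 2D + M$. It follows that $T := H \cap \{g \in G : d_{\Gamma_G}(U, gU) \le R\}$ is compact (a finite union of $U$-cosets intersected with the closed subgroup $H$) and generates $H$; moreover $d_{\Gamma_H}(U, hU) \le r \le n = d_{\Gamma_G}(U, hU)$. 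Combined with Lemma \ref{lemma:proper-embedding}, this makes $\iota$ a quasiisometric embedding, and Lemma \ref{lemma:properties-of-qc}(2) then yields that $H$ is quasiconvex.

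The hard part will be the pigeonhole extraction of a chain satisfying both $w_{i_k} = w_{i_{k+1}}$ and uniformly bounded gaps: a naive window argument only guarantees \emph{some} pair of indices with equal $w$-value, not necessarily a pair anchored at the current endpoint. Handling the bad cases will likely require locally modifying the $a_i$'s by left multiplication by an already-produced element of $H$, exploiting that such a shift alters the quasiconvexity bounds only by a controlled amount. This is the TDLC incarnation of Short's classical trick, and the inclusion $U \subseteq H$ is exactly what makes the key coset identity clean in the totally disconnected context.
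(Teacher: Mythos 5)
Your ingredients are individually sound --- reducing to two subgroups, taking $U\subseteq H:=H_1\cap H_2$ via van Dantzig so that the coset identity $w_i=w_j\Rightarrow a_ja_i^{-1}=b_jub_i^{-1}\in H_1\cap H_2$ is clean --- but the proof has a genuine gap exactly at the step you flag, and your proposed repair does not close it. Pigeonhole in a window of $N+1$ consecutive indices yields \emph{some} pair $i<j$ with $w_i=w_j$; nothing forces the value at your current anchor $i_k$ to recur within a bounded window, or indeed ever. For instance, nothing in the construction rules out the pattern $w_0=U$, $w_1=\dots=w_{n-1}=x\neq U$, $w_n=U$, for which no chain from $0$ to $n$ with equal consecutive offsets and bounded gaps exists. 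The fix you suggest --- left-multiplying the $a_l$ by an already-produced $c\in H$ --- destroys the property the whole construction rests on: $d(v_l,ca_lU)$ can differ from $d(v_l,a_lU)$ by as much as $|a_l^{-1}ca_l|$, which is unbounded, so the modified offsets need not lie in $B_{2D}(U)$. Converting ``one equal pair per window'' into a global statement is precisely the content of Short's surgery-plus-induction (replace $h$ by $(a_ja_i^{-1})^{-1}h\in H$, which admits a strictly shorter path still passing near the vertex of interest, and induct on length); that is a genuinely different mechanism from a chain along a fixed geodesic, and it is not set up in your proposal.

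There is also a quantitative error even granting the chain: $d_{\Gamma_G}(U,a_ja_i^{-1}U)$ is the word length of $a_ja_i^{-1}$, whereas $2D+(j-i)$ bounds $d_{\Gamma_G}(a_iU,a_jU)=|a_i^{-1}a_j|$; these are lengths of conjugate elements and need not be comparable, so compactness of your set $T$ is not established. (This part is repairable: if the chain existed, then $a_{i_k}=c_{k-1}\cdots c_0\in H$ for every $k$, so the increments $a_{i_k}^{-1}a_{i_{k+1}}$ lie in $H$, have length at most $2D+M$, and telescope to $h$ in the natural order --- note that your product $\prod_k a_{i_{k+1}}a_{i_k}^{-1}$, read with increasing $k$, does not telescope. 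In fact $a_{i_k}\in H$ with $d(v_{i_k},a_{i_k}U)\le D$ and bounded gaps would already give quasiconvexity of the orbit directly, with no need for the compact-generation detour.) For comparison, the paper proves the lemma by contradiction with a fixed pivot: assuming $H_1\cap H_2$ is not quasiconvex, it produces vertices $v_i$ at distance $r_i\to\infty$ from the orbit, applies pigeonhole over $i$ together with $[U:U\cap H_2]<\infty$ to obtain elements $h_ih_1^{-1}\in H_1\cap H_2$ with the index $1$ fixed, and derives a contradiction from $r_i\to\infty$ against the single uncontrolled constant $|h_1|$. That argument sacrifices an explicit quasiconvexity constant but avoids the chain problem entirely, and is the most economical way to salvage your approach.
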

\begin{proof}
    By induction, it is sufficient to prove the intersection of two quasiconvex subgroups of $G$ is quasiconvex. Let $H$ and $K$ be two open $C$-quasiconvex subgroups of $G$ for some $D\geq 0$. Let $\Gamma_G=\Gamma_G(K,U,A)$ be the Cayley-Abels graph of $G$. Suppose $H\cap K$ is not quasiconvex. Then, the orbit $\mathcal O:=\{gU:g\in H\cap K\}$ is not quasiconvex in $\Gamma_G$. This implies that there exists a sequence $\{g_iU\}\in\mathcal O$ such that some geodesic path $\gamma_i$ in $\Gamma_G$ joining $U$ and $g_iU$ does not lie in $r_i$-neighborhood of $\mathcal O$, where $r_i\to \infty.$ Thus, we have a vertex $v_i$ on $\gamma_i$ such that $B_{r_i}(v_i)$ does not intersect $\mathcal O$. Since $H$ and $K$ are quasiconvex, there exist $h_i\in H$ and $k_i\in K$ such that $h_i^{-1}k_iU$ belongs to a $2D$-radius ball around $U$. Since balls of finite radius in $\Gamma_G$ contain only finitely many vertices, $h_i^{-1}k_iU=gU$ for some $g\in G$ for infinitely many $i$. Thus, there exists $u_i\in U$ such that $h_i^{-1}k_i=gu_i$ for infinitely many $i$. Since $U\cap K$ is open and $U$ is compact, $[U:U\cap K]<\infty.$ Therefore, there exists $u_0\in U$ and $k_{0i}\in U\cap K$ such that $u_i=u_0k_{0i}$ for infinitely many $i$. By taking $k_i'=k_ik_{0i}^{-1}$, we see that $h_i^{-1}k_i'=gu_0$ for infinitely many $i$. Choose $r_i>D+|a_1|$, where $|a_1|$ is the distance from $U$ to $a_1U$ in $\Gamma_G$. Now, $h_1h_i^{-1}=k_1'k_i'^{-1}\in H\cap K$ and $v_i$ is $(D+|a_1|)$-close to $h_1h_i^{-1}$, a contradiction. 
\end{proof}

\begin{remark}\label{remark:normalizer-centralizer}
In contrast with the discrete setting, we note the following:
    \begin{enumerate}
        \item The normalizer of an open quasiconvex subgroup in a hyperbolic TDLC group need not be quasiconvex.
        \item The index of a quasiconvex subgroup of a hyperbolic TDLC group in its normalizer is not necessarily finite.
    \end{enumerate}
    \end{remark}
    
The above remark can be justified by the following example.
\begin{example}\label{example:normalizer-centralizer}
    Let $G=\mathbb{Z}\ltimes_p \mathbb{Q}_p$ denote the semidirect product of the discrete group $\mathbb{Z}$ with the additive group $\mathbb{Q}_p$ of $p$-adic numbers, where $\mathbb{Z}$ acts on $\mathbb{Q}_p$ by $n\cdot x=p^n x$. Then $G$ is a TDLC group with multiplication
    \[
        (n,x)\cdot (m,y)=(n+m,\;x+p^n y).
    \]
    By \cite[Example 3.7(2)]{combination-theorem}, $G$ is hyperbolic. Moreover, one checks that both the normalizer and centralizer of $\mathbb{Z}_p$ in $G$ are $\mathbb{Q}_p$. Since $\mathbb Q_p$ is not compactly generated, it is not quasiconvex by Lemma \ref{lemma:properties-of-qc}(1).
\end{example}
\subsection{Application to CT maps} 
A quasiconvex subgroup of a discrete hyperbolic group always admits a CT map (\cite[Theorem 3.9, III.H]{bridson-haefliger}). The following lemma is a non-discrete version of this fact.

\begin{lemma}\label{lemma:qc-implies-CT}
    Suppose $H$ is an open quasiconvex subgroup of a hyperbolic TDLC group $G$. Then, the pair $(H, G)$ admits a CT map.
\end{lemma}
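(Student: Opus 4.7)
The plan is essentially a one-line reduction to machinery already in place. By Lemma \ref{lemma:properties-of-qc}(3), since $H$ is an open quasiconvex subgroup of the hyperbolic TDLC group $G$, the natural embedding $\iota: \Gamma_H \to \Gamma_G$ between Cayley-Abels graphs is a quasiisometric embedding, and $\Gamma_H$ is itself hyperbolic. Both $\Gamma_H$ and $\Gamma_G$ are then proper geodesic Gromov hyperbolic spaces, and $\iota$ is a qi embedding between them.

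From here I would invoke the standard boundary extension result for qi embeddings between proper hyperbolic spaces (\cite[Theorem 3.9, III.H]{bridson-haefliger}): any quasiisometric embedding $Y \to X$ between proper hyperbolic geodesic metric spaces extends to a continuous map $\partial Y \to \partial X$. Applied to $\iota$, this continuous extension $\partial \iota : \partial \Gamma_H \to \partial \Gamma_G$ is, by definition, the Cannon-Thurston map for the pair $(H,G)$.

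Alternatively, and perhaps more in keeping with the tools already highlighted in the excerpt, one can verify Mitra's criterion (Lemma \ref{lemma:mitra-criterion}) directly. Fix a basepoint $y_0 \in \Gamma_H$, let $\delta$ be a hyperbolicity constant of $\Gamma_G$, and let $(\lambda_0,\epsilon_0)$ be the qi constants of $\iota$. Given a geodesic segment $\lambda \subset \Gamma_H$ lying outside $B_N(y_0)$, for every $x \in \lambda$ we have
\[
d_{\Gamma_G}\bigl(\iota(y_0),\iota(x)\bigr) \geq \tfrac{1}{\lambda_0}N - \epsilon_0,
\]
so the quasigeodesic $\iota(\lambda)$ lies outside $B_{N/\lambda_0 - \epsilon_0}(\iota(y_0))$ in $\Gamma_G$. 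By the stability of quasigeodesics (Theorem \ref{stability of quasigeodesics}) applied to $\Gamma_G$, any geodesic segment in $\Gamma_G$ joining the endpoints of $\iota(\lambda)$ lies within Hausdorff distance $D = D(\delta,\lambda_0,\epsilon_0)$ of $\iota(\lambda)$, and hence outside $B_{M(N)}(\iota(y_0))$ where $M(N) := N/\lambda_0 - \epsilon_0 - D$. Since $M(N) \to \infty$ as $N \to \infty$, Mitra's criterion is verified and the CT map exists.

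There is no genuine obstacle here: the content of the lemma is entirely packaged into Lemma \ref{lemma:properties-of-qc}(3), and both routes above are routine consequences of hyperbolic geometry once that qi embedding is in hand. The only mild subtlety is that the boundary extension theorem of Bridson-Haefliger is stated in the setting of proper geodesic hyperbolic spaces, which is fine since Cayley-Abels graphs are locally finite, hence proper.
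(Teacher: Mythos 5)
Your first route is exactly the paper's proof: Lemma \ref{lemma:properties-of-qc}(3) gives that $\iota:\Gamma_H\to\Gamma_G$ is a qi embedding, and \cite[Theorem 3.9, III.H]{bridson-haefliger} then supplies the continuous boundary extension. The alternative verification via Mitra's criterion is also correct but unnecessary; the paper stops after the first route.
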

\begin{proof}
      Let $\iota:\Gamma_H\to\Gamma_G$ be the embedding of Cayley-Abels graphs of $H$ and $G$, respectively. Since $H$ is quasiconvex in $G$, $\iota$ is a qi embedding by Lemma \ref{lemma:properties-of-qc}(3). Hence, by \cite[Theorem 3.9, III.H]{bridson-haefliger}, the CT map exists for the pair $(H, G)$.
\end{proof}
In the rest of this subsection, let $H$ be an open hyperbolic TDLC subgroup of a hyperbolic TDLC group $G$.  Let $\iota:\Gamma_H\to\Gamma_G$ be the embedding of Cayley-Abels graphs of $H$ and $G$, respectively. We show that the CT map for $\iota$ is injective if and only if $H$ is quasiconvex in $G$. For that, we need a little preparation from \cite{mahan-on-a-theorem-of-scott}. If $\lambda$ is a geodesic segment in $\Gamma_{H}$ then $\lambda^{r}$, a geodesic in $\Gamma_{G}$ joining the end-points of $\iota(\lambda)$, is called a geodesic realisation of $\lambda$. Consider sequences of geodesic segments $\lambda_{n} \subset \Gamma_{H}$ such that $U_{H} \in \lambda_{n}$ and $\lambda_{n}^{r} \cap B_U(n) = \phi$, where $B_U(n)$ is the ball of radius $n$ around $U \in V(\Gamma_{G})$ in $\Gamma_G$. Let $\mathcal{L}$ denote the set of all bi-infinite subsequential limits of all such sequences $\{\lambda_{n}\}$. Let $t_h$ denote the left translation by the element $h\in H$. Then, following \cite{mahan-on-a-theorem-of-scott}, we define the set of ending laminations for a hyperbolic subgroup $H$ of a hyperbolic TDLC group $G$ as follows:

\begin{definition}
    The set of {\em ending laminations} $\Lambda = \Lambda(H,G)$ is defined
by $\Lambda :=\{(p,q) \in \partial \Gamma_{H} \times \partial \Gamma_{H}: p\neq q \text{ and } p,q \text{ are the endpoints of } t_{h}(l) \text{ for some } l \in \mathcal L\}$.
\end{definition}

The idea of the proof of the following lemma is the same as the proof of \cite[Lemma 2.1]{mahan-on-a-theorem-of-scott}. For completeness, we include the proof.
\begin{lemma}
    $H$ is quasiconvex in $G$ if and only if $\Lambda= \phi$.
\end{lemma}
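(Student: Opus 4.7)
I would prove both implications. The forward direction is essentially the stability of quasigeodesics; the backward direction uses a geometric transfer argument followed by an Arzela-Ascoli limit.

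\emph{Forward direction.} If $H$ is quasiconvex then, by Lemma~\ref{lemma:properties-of-qc}(3), $\iota:\Gamma_H\to\Gamma_G$ is a quasiisometric embedding. For any geodesic segment $\lambda\subset\Gamma_H$ with $U_H\in\lambda$, the image $\iota(\lambda)$ is a quasigeodesic in $\Gamma_G$ joining the endpoints of $\lambda^r$, so Theorem~\ref{stability of quasigeodesics} provides a fixed $R$ with $d^{Haus}(\iota(\lambda),\lambda^r)\le R$. As $U=\iota(U_H)\in\iota(\lambda)$, we obtain $d_G(U,\lambda^r)\le R$, so no sequence $\{\lambda_n\}$ with $U_H\in\lambda_n$ and $\lambda_n^r\cap B_U(n)=\phi$ can persist for arbitrarily large $n$. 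Hence $\mathcal L=\phi$ and $\Lambda=\phi$.

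\emph{Backward direction (contrapositive).} Assume $H$ is not quasiconvex, so that for each $n$ there exist $h_{1,n},h_{2,n}\in H$ and a geodesic in $\Gamma_G$ between $h_{1,n}U$ and $h_{2,n}U$ containing a point at distance $>n$ from the orbit $\mathcal O_H:=\{hU:h\in H\}$. Choose a geodesic $\nu_n$ in $\Gamma_H$ joining $h_{1,n}U_H$ and $h_{2,n}U_H$; since two geodesics in the $\delta$-hyperbolic graph $\Gamma_G$ with common endpoints lie within Hausdorff distance $2\delta$, the realisation $\nu_n^r$ contains a point $q_n$ with $d_G(q_n,\mathcal O_H)>n-2\delta$. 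The crucial geometric step is to show $R_n:=\max_{v\in V(\iota(\nu_n))}d_G(v,\nu_n^r)\to\infty$. If instead $\iota(\nu_n)\subseteq N_R(\nu_n^r)$ for some fixed $R$ along a subsequence, a standard nearest-point projection argument in $\Gamma_G$, exploiting that projections of consecutive vertices of the path $\iota(\nu_n)$ onto $\nu_n^r$ vary by a bounded amount depending on $R$ and $\delta$, forces $\nu_n^r\subseteq N_{R'}(\iota(\nu_n))\subseteq N_{R'}(\mathcal O_H)$, contradicting the existence of $q_n$.

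Picking a vertex $V_n=k_nU$ on $\iota(\nu_n)$ realising $R_n$ (with $k_n\in H$) and translating $\nu_n$ by $k_n^{-1}$ yields a geodesic $\lambda_n:=k_n^{-1}\nu_n\subset\Gamma_H$ through $U_H$ whose realisation satisfies $d_G(U,\lambda_n^r)=R_n\to\infty$; after passing to a subsequence, $\lambda_n^r\cap B_U(n)=\phi$. Since $\iota$ is $1$-Lipschitz on vertices, both endpoints of $\nu_n$ lie at $\Gamma_H$-distance at least $R_n$ from the vertex corresponding to $V_n$, so after translation the endpoints of $\lambda_n$ tend to infinity in both directions from $U_H$. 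Arzela-Ascoli on the locally finite graph $\Gamma_H$ then produces a subsequential bi-infinite geodesic limit $\lambda_\infty\in\mathcal L$ through $U_H$, and its pair of endpoints in $\partial\Gamma_H$ belongs to $\Lambda$ (taking $h=e$ in the definition), contradicting $\Lambda=\phi$. The main obstacle is the geometric transfer asserted in the second paragraph — converting a far-from-orbit point on the geodesic $\nu_n^r$ into a vertex on the winding path $\iota(\nu_n)$ far from $\nu_n^r$ — which captures the essential divergence of $\iota(\nu_n)$ from a geodesic in $\Gamma_G$ when $H$ fails to be quasiconvex.
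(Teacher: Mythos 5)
Your proof is correct and follows the same route as the paper's (which in turn follows Mitra): the forward direction is the qi-embedding plus stability of quasigeodesics, and the converse translates a sequence of badly-realised geodesic segments to the basepoint and extracts a bi-infinite subsequential limit in $\mathcal L$. The only difference is that you explicitly justify, via the coarse continuity of nearest-point projection onto $\nu_n^r$, the step the paper merely asserts — that failure of quasiconvexity produces segments $\lambda_i$ with $h_iU_H\in\lambda_i$ and $\lambda_i^r\cap B_{h_iU}(i)=\phi$ — and that justification is sound.
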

\begin{proof}
    Suppose $H$ is quasiconvex in $G$. Let $\lambda$ be a geodesic segment in $\Gamma_{H}$ and $\lambda^{r}$ be its geodesic realisation in $\Gamma_{G}$. Since $G$ is quasiconvex, $\iota:\Gamma_{H} \to \Gamma_{G}$ is a qi embedding by Lemma \ref{lemma:properties-of-qc}(3). Therefore $\iota(\lambda)$ is a quasigeodesic in $\Gamma_{G}$ joining the endpoints of $\lambda^{r}$. By Theorem \ref{stability of quasigeodesics}, they are Hausdorff close in $\Gamma_{G}$ since $G$ is hyperbolic. Therefore, there does not exist a sequence of geodesic segments $\{\lambda_{i}\} \subset \Gamma_{H}$ such that $U_{H} \in \lambda_{i}$ and $\lambda_{i}^{r} \cap B_U(i) = \phi$. Thus, $\Lambda= \phi$. 

    Conversely, suppose $H$ is not quasiconvex in $G$. Then there exists a sequence of geodesic segments $\{\lambda_{i}\} \subset \Gamma_{H}$ and $h_iU_H\in\lambda_i$ such that $\lambda_{i}^{r} \cap B_{h_iU}(i) = \phi$. Translating by $h_{i}^{-1}$ and taking subsequential limits, we have $\mathcal L \neq \phi$. Hence $\Lambda \neq \phi$.
\end{proof}

Suppose the pair $(H,G)$ admits a CT map $\partial \iota:\partial H\to\partial G.$ Define a set $\Lambda_{CT} := \{(p,q) \in \partial\Gamma_{H} \times \partial\Gamma_{H}: p\neq q \text{ and } \partial{\iota}(p) =\partial{\iota}(q)\}.$ The proof of the following lemma follows from the proof of \cite[Lemma 2.2]{mahan-on-a-theorem-of-scott}. Hence, we skip its proof.

\begin{lemma}
    $\Lambda= \Lambda_{CT}$.       \qed
\end{lemma}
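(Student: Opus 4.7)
The plan is to mimic Mitra's argument in \cite{mahan-on-a-theorem-of-scott} by exploiting the basic hyperbolic-geometry fact that a geodesic segment in $\Gamma_G$ joining two points $a$ and $b$ satisfies $(a\mid b)_U \geq d_G(U,[a,b]) - c$ for a universal $\delta$-dependent constant $c$; hence a sequence of geodesic segments $\mu_n$ in $\Gamma_G$ escapes every ball around $U$ if and only if the endpoints converge to a common point of $\partial\Gamma_G$. The other ingredient is $H$-equivariance of the extended map: since $\iota$ is the $H$-equivariant inclusion of Cayley--Abels graphs, $\partial \iota(h\cdot \xi) = h\cdot \partial\iota(\xi)$ for every $h\in H$ and $\xi\in\partial\Gamma_H$.

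For the inclusion $\Lambda \subseteq \Lambda_{CT}$, I would take $(p,q)\in\Lambda$ and unfold the definition: there are $h\in H$, a bi-infinite geodesic $l\in\mathcal L$ with endpoints $p',q'\in\partial\Gamma_H$, and $p=h\cdot p'$, $q=h\cdot q'$. By the definition of $\mathcal L$, there is a sequence $\lambda_n$ of finite geodesic segments in $\Gamma_H$ with $U_H\in\lambda_n$, whose endpoints $x_n,y_n$ satisfy $x_n\to p'$ and $y_n\to q'$ in $\overline{\Gamma_H}$, and whose realisations $\lambda_n^r$ avoid $B_U(n)$. The endpoints of $\lambda_n^r$ are $\iota(x_n)$ and $\iota(y_n)$, which in $\overline{\Gamma_G}$ converge to $\partial\iota(p')$ and $\partial\iota(q')$ respectively. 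Since $d_G(U,\lambda_n^r)\to\infty$, the hyperbolic fact forces $\partial\iota(p')=\partial\iota(q')$; applying $H$-equivariance of $\partial\iota$ gives $\partial\iota(p)=h\cdot\partial\iota(p')=h\cdot\partial\iota(q')=\partial\iota(q)$, so $(p,q)\in\Lambda_{CT}$.

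For the reverse inclusion $\Lambda_{CT}\subseteq\Lambda$, I start with $(p,q)\in\Lambda_{CT}$, $p\neq q$, and choose a bi-infinite geodesic $\alpha$ in $\Gamma_H$ joining $p$ and $q$ (which exists by hyperbolicity of $\Gamma_H$), together with a vertex $v\in\alpha$. Pick $h\in H$ with $hv=U_H$; then $h\cdot\alpha$ is a bi-infinite geodesic in $\Gamma_H$ through $U_H$ with endpoints $h\cdot p$ and $h\cdot q$, and by $H$-equivariance $\partial\iota(h\cdot p)=\partial\iota(h\cdot q)$. Take $\lambda_n$ to be the sub-segment of $h\cdot\alpha$ of length $2n$ centred at $U_H$; its endpoints $x_n,y_n\to h\cdot p$ and $h\cdot q$ in $\overline{\Gamma_H}$, so $\iota(x_n)$ and $\iota(y_n)$ both converge to the single point $\partial\iota(h\cdot p)\in\partial\Gamma_G$. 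The hyperbolic-geometry fact now runs in reverse: the Gromov product $(\iota(x_n)\mid\iota(y_n))_U\to\infty$, so $d_G(U,\lambda_n^r)\to\infty$, and after passing to a subsequence we may assume $\lambda_n^r\cap B_U(n)=\emptyset$. Thus $h\cdot\alpha$, as a subsequential limit of such $\{\lambda_n\}$, lies in $\mathcal L$, and $(p,q)$ are the endpoints of $t_{h^{-1}}(h\cdot\alpha)$; hence $(p,q)\in\Lambda$.

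The main technical point, and the only place a nontrivial argument is needed, is the quantitative escape/non-escape dictionary for geodesics in a hyperbolic space relative to a basepoint; everything else is routine bookkeeping with the $H$-action and the definitions of $\mathcal L$ and $\partial\iota$. No new ingredient beyond hyperbolicity of $\Gamma_G$ and the existence of the Cannon--Thurston extension $\partial\iota$ is required, which is why this step is essentially Mitra's original argument transplanted to the TDLC setting.
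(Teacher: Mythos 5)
Your proof is correct and is essentially the argument the paper intends: the paper simply defers to the proof of Lemma 2.2 of \cite{mahan-on-a-theorem-of-scott}, and your reconstruction (the Gromov-product/escape dictionary for geodesic realisations, plus $H$-equivariance of $\partial\iota$ to handle the translations $t_h$ in the definition of $\Lambda$) is exactly Mitra's argument transplanted to the Cayley--Abels setting. No gaps.
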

By combining the previous two lemmas, we have the following.
\begin{proposition}\label{prop:CT-injective-iff-qc}
    The CT map $\partial \iota$ is injective if and only if $H$ is quasiconvex in $G$.  \qed
\end{proposition}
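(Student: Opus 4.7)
The plan is to chain together the two lemmas immediately preceding the proposition. By definition of $\Lambda_{CT}$, the CT map $\partial \iota : \partial \Gamma_H \to \partial \Gamma_G$ is injective if and only if $\Lambda_{CT} = \emptyset$: injectivity says there is no pair of distinct boundary points $p \neq q$ in $\partial \Gamma_H$ with $\partial \iota(p) = \partial \iota(q)$, which is precisely the emptiness of $\Lambda_{CT}$.

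Next I would invoke the equality $\Lambda = \Lambda_{CT}$ established in the previous lemma to translate the condition $\Lambda_{CT} = \emptyset$ into $\Lambda = \emptyset$. Finally, I would apply the lemma characterizing quasiconvexity via ending laminations to conclude that $\Lambda = \emptyset$ is equivalent to $H$ being quasiconvex in $G$. Putting these three equivalences together yields the proposition.

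Since all the real work has already been done in establishing the two preceding lemmas, there is no genuine obstacle here; the proof is essentially a one-line concatenation. The only thing to double-check is that the CT map $\partial \iota$ is well-defined in the first place, but this is ensured by the hypothesis that the pair $(H,G)$ admits a CT map implicit in the definition of $\Lambda_{CT}$ (and $H$ being hyperbolic guarantees $\partial \Gamma_H$ is defined). No additional machinery beyond the two lemmas is needed.
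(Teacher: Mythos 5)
Your proof is correct and is exactly the argument the paper intends: the proposition is stated with an immediate \qed following the sentence ``By combining the previous two lemmas,'' and your chain (injectivity of $\partial\iota$ $\Leftrightarrow$ $\Lambda_{CT}=\emptyset$ $\Leftrightarrow$ $\Lambda=\emptyset$ $\Leftrightarrow$ $H$ quasiconvex) is precisely that combination. Your side remark about the well-definedness of $\partial\iota$ is also consistent with the paper's standing hypothesis in that subsection.
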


\subsection{Limit set of quasiconvex subgroups}
\begin{definition}[Limit set]\label{defn:limit-set}
    Let $G$ be a hyperbolic TDLC group with Cayley-Abels graph $X$. The {\em limit set} of a subgroup $H$ of $G$ is denoted by $\Lambda(H)$, and it is defined as the set of limit points of (any) some orbit of $H$ in $X$, i.e.
    $$\Lambda(H):=\{\xi\in \partial X: \text{there exists a sequence in the orbit of }H \text{ converging to } \xi\}.$$
\end{definition}
\begin{definition}\cite{swenson}
   Let $G$ be a hyperbolic TDLC group with Cayley-Abels graph $X$. A limit point $\xi$ of a subgroup of $H$ of $G$ is said to be {\em conical} if, for all rays $\alpha$ representing $\xi$, there exists a constant $r\geq 0$ such that $N_r(\alpha)\cap Hv\neq \phi,$ where $Hv$ denotes the $H$-orbit of $v$ in $X$.
\end{definition}
In \cite{swenson}, Swenson proved that a subgroup of a discrete hyperbolic group is quasiconvex if and only if every limit point is conical. Using a completely different technique, we generalise it in our setting. 
\begin{lemma}\label{lemma:char-qc-in-terms-conical}
    Let $G$ be a hyperbolic TDLC group and let $H$ be an open subgroup of $G$. Then, $H$ is quasiconvex in $G$ if and only if every limit point of $H$ is conical.
\end{lemma}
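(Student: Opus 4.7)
The plan is to prove both implications using the stability of (quasi-)geodesics in hyperbolic spaces (Theorem \ref{stability of quasigeodesics}) together with an Arzelà--Ascoli-type limiting argument on the locally finite graph $\Gamma_G$.

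For the forward direction, I would take $Hv$ to be $D$-quasiconvex and fix $\xi\in\Lambda(H)$ together with some orbit sequence $h_n v\to\xi$. Each geodesic $[v,h_n v]$ lies in $N_D(Hv)$, and by local finiteness of $\Gamma_G$ a subsequence converges to a ray $\alpha_0$ from $v$ to $\xi$ which still lies in $N_D(Hv)$. Any other ray $\alpha$ representing $\xi$ is at bounded Hausdorff distance from $\alpha_0$ by $\delta$-hyperbolicity (the bound depending only on $\delta$ and $d(\alpha(0),v)$), so $\alpha\subset N_r(Hv)$ for a constant $r$. This gives conicality of $\xi$.

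For the converse I would argue by contradiction. If $H$ is not quasiconvex, there exist $h_n,h_n'\in H$ and points $p_n$ on the geodesic $\gamma_n:=[h_n v,h_n'v]$ with $R_n:=d(p_n,Hv)\to\infty$. Choose $k_n\in H$ realising the nearest-point projection of $p_n$ to $Hv$ and translate the entire configuration by $k_n^{-1}$; set $\tilde p_n:=k_n^{-1}p_n$, $\tilde h_n v:=k_n^{-1}h_n v$, $\tilde h_n'v:=k_n^{-1}h_n'v$, and $\tilde\gamma_n:=k_n^{-1}\gamma_n$. Then $d(\tilde p_n,v)=R_n=d(\tilde p_n,Hv)$, while $d(\tilde p_n,\tilde h_n v)$ and $d(\tilde p_n,\tilde h_n'v)$ are both at least $R_n$ because both endpoints lie in $Hv$. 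After passing to a subsequence, $\tilde h_n v\to\xi_-$ and $\tilde h_n'v\to\xi_+$ in $\overline{\Gamma_G}=\Gamma_G\cup\partial G$; the length inequalities forbid both limits from being vertices of $\Gamma_G$, so at least one lies in $\partial G$ and is automatically in $\Lambda(H)$.

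In the generic case both endpoints escape to $\xi_\pm\in\Lambda(H)$. Applying the conical hypothesis (which I read as providing rays $\alpha_\pm$ from $v$ to $\xi_\pm$ contained in some $N_{r_\pm}(Hv)$) and concatenating, I obtain a bi-infinite $(1,O(\delta))$-quasi-geodesic from $\xi_-$ to $\xi_+$; by Theorem \ref{stability of quasigeodesics} it is Hausdorff close to the genuine bi-infinite geodesic $\gamma$ between $\xi_\pm$, so $\gamma\subset N_{r_0}(Hv)$ for some uniform $r_0$. Standard thin-triangle estimates, applied to $\tilde\gamma_n$ whose endpoints approach $\xi_\pm$, place $\tilde p_n$ within $O(\delta)$ of $\gamma$—precisely because $\tilde p_n$ lies deep on $\tilde\gamma_n$—so $d(\tilde p_n,Hv)\leq r_0+O(\delta)$, contradicting $R_n\to\infty$. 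The degenerate case where one endpoint of $\tilde\gamma_n$ stays bounded is handled identically, using a single ray from $v$ to the escaping boundary point instead of a bi-infinite geodesic. The one delicate point is uniformity of the conical constants: $r_\xi$ is a priori point-dependent, but since the contradiction only needs the hypothesis at the two fixed limits $\xi_\pm$ produced after passing to subsequences, pointwise conicality at those specific points suffices and no uniformity over $\Lambda(H)$ is required.
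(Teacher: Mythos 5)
Your forward direction is fine (and is essentially what the paper asserts without proof). The converse, however, has a genuine gap, and it sits exactly where you flag ``the one delicate point'' --- but the issue is not uniformity of the conical constants, it is your reading of the hypothesis itself. Conicality of $\xi$ (both as the paper uses it, e.g.\ in the proof of Proposition \ref{prop:limit-set-intersection-property}, and in Swenson's sense) supplies a sequence $h_jv\in Hv$ with $d(h_jv,\alpha)\le r$ and $h_jv\to\xi$; it does \emph{not} say that a ray $\alpha$ to $\xi$ is contained in $N_r(Hv)$. Between consecutive close approaches of the orbit, the ray may make arbitrarily long excursions away from $Hv$, so you cannot conclude that the bi-infinite geodesic from $\xi_-$ to $\xi_+$ lies in $N_{r_0}(Hv)$, and the contradiction with $R_n\to\infty$ evaporates. (The statement ``some ray to $\xi$ lies in a bounded neighborhood of $Hv$'' is, once made uniform over the limit set, essentially the quasiconvexity of the weak hull that you are trying to prove.) There is a second, independent problem: the claim that thin-triangle estimates place $\tilde p_n$ within $O(\delta)$ of $\gamma$ ``because $\tilde p_n$ lies deep on $\tilde\gamma_n$'' is not justified. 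Since $d(v,\tilde p_n)=R_n\to\infty$, the point $\tilde p_n$ leaves every compact set, so convergence of $\tilde\gamma_n$ to $\gamma$ on compacta says nothing about it; the thin ideal quadrilateral with vertices $\tilde h_nv,\xi_-,\xi_+,\tilde h_n'v$ only places $\tilde p_n$ within $O(\delta)$ of the union of $(\xi_-,\xi_+)$ with the two rays $[\tilde h_nv,\xi_-)$ and $[\tilde h_n'v,\xi_+)$, and excluding the ray alternatives requires comparing $R_n$ with the Gromov products $(\tilde h_nv\mid\xi_-)_v$ and $(\tilde h_n'v\mid\xi_+)_v$, which you do not do. The degenerate possibility $\xi_-=\xi_+$ is also not excluded.

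For comparison, the paper's converse avoids all of this geometry: it converts geometric conicality into the dynamical notion of Definition \ref{defn:conical-limit-point-1}, applies Theorem \ref{theorem:uniform-convergence-charaterisation} to conclude that $H$ acts on $\Lambda(H)$ as a uniform convergence group (hence $H$ is hyperbolic with boundary $H$-equivariantly homeomorphic to $\Lambda(H)$), and then deduces quasiconvexity from the existence of an injective Cannon--Thurston map via Proposition \ref{prop:CT-injective-iff-qc}. A direct geometric proof along your lines amounts to reproving Swenson's theorem and needs a substantially more careful limiting argument than the one you sketch.
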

\begin{proof}
    It follows from the definition of conical limit points that if $H$ is quasiconvex in $G$, then every limit point of $H$ is conical. Conversely, suppose $\xi$ is a conical limit point. Then, $\xi$ is a conical limit point as in Definition \ref{defn:conical-limit-point-1} for $H\curvearrowright\partial G$. Since $\Lambda(H)$ is a closed subset of $\partial G$, $\xi$ is conical for $H\curvearrowright \Lambda(H)$. Thus, by Theorem \ref{theorem:uniform-convergence-charaterisation}, $H$ acts as a uniform convergence group on $\Lambda(H).$ This implies that $H$ is a hyperbolic group and $\Lambda(H)$ is $H$-equivariantly homeomorphic to the Gromov boundary of $H$. Therefore, an injective CT map exists for $(H,G)$. Hence, by Proposition \ref{prop:CT-injective-iff-qc}, $H$ is quasiconvex in $G$.
\end{proof}
Two quasiconvex subgroups $H$ and $K$ of a hyperbolic TDLC group $G$ are said to satisfy {\em limit set intersection property} if $\Lambda(H\cap K)=\Lambda(H)\cap\Lambda(K).$ Now, we prove the following, generalising \cite[Theorem 13]{swenson}. 
\begin{proposition}\label{prop:limit-set-intersection-property}
    Open quasiconvex subgroups of hyperbolic TDLC groups satisfy the limit set intersection property.
\end{proposition}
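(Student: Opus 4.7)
\medskip
\noindent\textbf{Proof plan.} The inclusion $\Lambda(H\cap K)\subseteq \Lambda(H)\cap\Lambda(K)$ is immediate from Definition \ref{defn:limit-set}: any $(H\cap K)$-orbit in $\Gamma_G$ sits inside an orbit of $H$ and inside an orbit of $K$, so a sequence from the smaller orbit converging to some $\xi$ at infinity witnesses $\xi\in \Lambda(H)$ and $\xi\in\Lambda(K)$ simultaneously. All of the work is in the reverse inclusion.

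Fix a basepoint $v_0\in V(\Gamma_G)$ with compact open stabilizer $U$, and let $\xi\in\Lambda(H)\cap\Lambda(K)$. Since $H$ and $K$ are open quasiconvex, Lemma \ref{lemma:char-qc-in-terms-conical} says $\xi$ is a conical limit point for each. So for any chosen geodesic ray $\alpha$ from $v_0$ to $\xi$ there is a constant $r\ge 0$ and sequences $\{h_nv_0\}\subset Hv_0$, $\{k_nv_0\}\subset Kv_0$ converging to $\xi$ while remaining within distance $r$ of $\alpha$. Pairing points via nearest-point projection to $\alpha$ (and passing to subsequences), I arrange that $h_nv_0$ and $k_nv_0$ are close to the same point of $\alpha$, so that $d(h_nv_0,k_nv_0)\le R$ for a uniform constant $R$.

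The key TDLC input comes next. From $d(v_0,h_n^{-1}k_nv_0)\le R$ and the fact that $\Gamma_G$ is locally finite, the elements $h_n^{-1}k_n$ lie in a finite union of left cosets $g_1U,\dots,g_mU$ of $U=\mathrm{Stab}(v_0)$. By pigeonhole, after passing to a subsequence I may write
\[
h_n^{-1}k_n \;=\; g_0\, u_n, \qquad u_n\in U,
\]
for one fixed $g_0\in G$. Because $K$ is open in $G$, the subgroup $U\cap K$ is open in the compact group $U$, hence of finite index; a second pigeonhole on the (finitely many) cosets of $U\cap K$ in $U$ yields, after a further subsequence, $u_1^{-1}u_n\in U\cap K\subseteq K$ for every $n$.

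Comparing the relations $h_1^{-1}k_1=g_0u_1$ and $h_n^{-1}k_n=g_0u_n$ and cancelling $g_0$ gives
\[
h_1 h_n^{-1} \;=\; k_1\,(u_1^{-1}u_n)\,k_n^{-1},
\]
so $h_1h_n^{-1}\in K$; combined with the obvious $h_1h_n^{-1}\in H$, I obtain $\gamma_n:=h_1h_n^{-1}\in H\cap K$. Finally, since $\gamma_n^{-1}v_0 = h_n(h_1^{-1}v_0)$ lies at the fixed distance $d(v_0,h_1^{-1}v_0)$ from $h_nv_0$, and $h_nv_0\to\xi$, also $\gamma_n^{-1}v_0\to\xi$; hence $\xi\in\Lambda(H\cap K)$. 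The anticipated main obstacle is the passage from conicality for $H$ and $K$ individually to a uniform bound $d(h_nv_0,k_nv_0)\le R$, and then the extraction of a genuine element of $H\cap K$ from what is a priori only an element of $H$ that happens to lie in $K\cdot U\cdot K$; the two pigeonhole arguments on cosets of $U$, using openness and compactness crucially, are what replace the straightforward discrete-group version of Swenson's argument.
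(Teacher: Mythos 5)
Your overall strategy coincides with the paper's: reduce to producing elements of $H\cap K$ from nearby pairs $h_nv_0$, $k_nv_0$ via a double pigeonhole (first on cosets of the compact open subgroup $U$ using local finiteness of $\Gamma_G$, then on cosets of $U\cap K$ in $U$ using $[U:U\cap K]<\infty$), followed by the identity $h_1h_n^{-1}=k_1(u_1^{-1}u_n)k_n^{-1}$. That part of your argument, and the concluding observation that $h_nh_1^{-1}v_0$ stays at bounded distance from $h_nv_0$ and hence converges to $\xi$, are correct and match the paper.

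The genuine gap is the step you yourself flag as the main obstacle: obtaining the uniform bound $d(h_nv_0,k_nv_0)\le R$. Conicality of $\xi$ for $H$ and for $K$ \emph{separately} gives two sequences of orbit points lying in bounded neighborhoods of $\alpha$ and tending to $\xi$, but it gives no control on how the nearest-point projections of the two sequences onto $\alpha$ interleave. The $H$-orbit points near $\alpha$ could sit at parameters $t_1<t_2<\cdots\to\infty$ and the $K$-orbit points at parameters $s_1<s_2<\cdots\to\infty$ with $\inf_m|t_n-s_m|\to\infty$ as $n\to\infty$ (e.g.\ $t_n=n^2$, $s_m=m^2+m$), in which case no choice of subsequences produces uniformly close pairs. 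So ``pairing via nearest-point projection and passing to subsequences'' does not follow from conicality alone. The repair is to use quasiconvexity of $K$ directly rather than its consequence for conical points: since $\xi\in\Lambda(K)$ and $Kv_0$ is $D$-quasiconvex, the entire ray $\alpha$ lies in a bounded neighborhood of $Kv_0$ (take limits of geodesics $[v_0,k_jv_0]$ with $k_jv_0\to\xi$, plus $2\delta$ for the choice of ray). Then \emph{every} $h_nv_0$, being within $N'$ of $\alpha$, admits some $k_n\in K$ with $d(h_nv_0,k_nv_0)\le D+N'$, and the rest of your argument goes through verbatim. This is exactly how the paper closes the step, so the gap is local and fixable, but as written your proof does not establish the bound it relies on.
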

\begin{proof}
    Let $G$ be a hyperbolic TDLC group with open quasiconvex subgroups $H$ and $K$. By Lemma \ref{lemma:intersection-of-qc-is-qc}, $H\cap K$ is quasiconvex in $G$. Clearly, $\Lambda(H\cap K)\subseteq \Lambda(H)\cap\Lambda(K).$ 
    
    For the reverse inclusion, let $\xi\in\Lambda(H)\cap\Lambda(K).$ By Lemma \ref{lemma:char-qc-in-terms-conical}, it is enough to show that $\xi$ is a conical limit point of $H\cap K$. Fix a base point $aU\in \Gamma_G$ and let $\alpha$ be a geodesic ray based at $aU$ representing $\xi$. Since $K$ is quasiconvex, there is $D\geq0$ such that $\alpha$ lies in the $D$-neighbourhood of $KaU$.
 Since $\xi$ is a conical limit point of $H$, there exists $N'>0$ and a sequence $h_i\in H$ such that $d(h_i(aU),\alpha)\le N'$ and $d(h_i(aU),h_j(aU))>4N'$ for $i\ne j$. For each $i$ choose $k_i\in K$ with $d(k_i(aU),h_i(aU))\le D+N'$. Let $N:=D+N'$ and let 
$C:=\{g\in G: gB_N(aU)\cap B_N(aU)\neq\phi\}.$
Since the action of $G$ on $X$ is proper, the closure $\overline{C}$ of $C$ is compact, and thus we have that $U$ has finitely many cosets in $\overline{C}$. For all $i$, 
$h_i^{-1}k_iB_N(aU)\cap B_N(aU)\neq\phi$, and therefore $h_i^{-1}k_i\in C$. By passing to a subsequence, we may assume there is a fixed $g\in \overline{C}$ such that $h_i^{-1}k_i U=gU$ for all $i$. For each such $i$, there are $u_i\in U$ with $h_i^{-1}k_i=gu_i$. Since $U$ is compact and $U\cap K$ is open, $[U:U\cap K]<\infty$, and so there exists a fixed $u_0\in U$ and $k_{0i}\in K$ with $u_i=u_0k_{0i}$ for infinitely many $i$, giving $h_i^{-1}k_i=gu_0k_{0i}$. Writing $k_i':=k_ik_{0i}^{-1}$, we have $h_i^{-1}k_i'=gu_0$ for infinitely many $i$, and hence $h_i h_1^{-1}=k_i'k_1'^{-1}\in H\cap K$ for all $i$. Finally, since $h_ih_1^{-1}aU$ lies in $(N+|h_1^{-1}a|)$-neighborhood of $\alpha$ where $|h_1^{-1}a|$ denotes the distance from $aU$ to $h_1^{-1}aU$ in $\Gamma_G$, it follows that $\xi$ is a conical limit point of $H\cap K$. This completes the proof of the proposition.
\end{proof}

\begin{proposition}\label{prop:finite-index}
Let $G$ be a TDLC group acting properly and
cocompactly on a locally compact Hausdorff space $Y$. Let $H < G$ be an open subgroup that also acts cocompactly on $Y$.
Then $H$ has finite index in $G$.
\end{proposition}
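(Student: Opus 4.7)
\smallskip
\noindent\textbf{Proof plan for Proposition \ref{prop:finite-index}.}
The plan is to build a relatively compact subset $E\subseteq G$ such that $G=HE$, and then exploit the fact that the right cosets of $H$ are open to conclude that only finitely many of them meet $\overline E$. Since $G=HE$ forces every right coset $Hg$ to be of the form $He$ with $e\in E$, finiteness of the index follows.

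First I would fix a base point $y_0\in Y$ and use the cocompactness of the $H$-action to choose a compact set $L\subseteq Y$ with $HL=Y$; by enlarging $L$ if necessary I may assume $y_0\in L$. Setting
\[
E\;=\;\{\,g\in G:gy_0\in L\,\},
\]
the first key step is to verify $G=HE$: for any $g\in G$ we have $gy_0\in HL$, so $gy_0=h\ell$ for some $h\in H$, $\ell\in L$, which gives $h^{-1}g\in E$. The second step is to show that $\overline{E}$ is compact; this is where the properness of the $G$-action on $Y$ enters. Indeed, for every $g\in E$ the point $gy_0$ lies in $L\cap gL$, so $E$ is contained in $\{g\in G: gL\cap L\neq\emptyset\}$, which is relatively compact by properness.

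The third and final step is the counting step. Because $H$ is open, every right coset $Hg=H\cdot g$ is open in $G$, so the partition of $G$ into right cosets is an open cover. Restricting to $\overline E$ and using compactness, only finitely many right cosets $Hg_1,\dots,Hg_n$ meet $\overline E$. Combining this with $G=HE$: for any $g\in G$, write $g=he$ with $h\in H$ and $e\in E\subseteq\overline E$; then $Hg=He=Hg_i$ for some $i$, so $[G:H]\leq n<\infty$.

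The step I expect to require the most care is the second one, where properness of the $G$-action must be applied to a well-chosen compact set, together with the choice to include $y_0$ in $L$ so that $gy_0\in L\cap gL$ witnesses $gL\cap L\neq\emptyset$. The openness of $H$, used only in the last step, is the feature that turns a mere covering by cosets into a finite covering, and is the conceptual heart of the argument; once the compact piece $\overline E$ is in hand this is immediate.
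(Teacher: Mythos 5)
Your proposal is correct and follows essentially the same route as the paper: both construct a relatively compact set $E$ (resp.\ $C=\{\gamma: \gamma K\cap K\neq\emptyset\}$) with $G=HE$ using cocompactness of the $H$-action and properness of the $G$-action, and then use openness of $H$ to extract finitely many cosets meeting the compact closure. The only cosmetic difference is that you track the orbit of a single base point $y_0\in L$ while the paper intersects translates of the compact set directly, and you phrase the last step as a finite subcover of $\overline{E}$ by open cosets rather than via discreteness of $G/H$; these are interchangeable.
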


\begin{proof}
Since the action of $H$ on $Y$ is cocompact, there exists a compact set
$K \subset Y$ such that
$Y = \bigcup_{h \in H} hK.$
Let $g \in G$ be arbitrary. Then $gK$ is a compact subset of $Y$, and hence
there exists $h \in H$ such that
$gK \cap hK \neq \phi.$
Equivalently,
$h^{-1}gK \cap K \neq \varnothing.$
Define
$C := \{\gamma \in G \mid \gamma K \cap K \neq \phi\}.$
The above argument shows that every $g \in G$ can be written as $g = h \gamma \text{ with }
h \in H,\ \gamma \in C,$ and hence $G = H C.$ Since the action of $G$ on $Y$ is proper, the set $\overline{C}$ is compact in $G$.
Since $H$ is open in $G$, the quotient $G/H$ is discrete.
The image of $\overline{C}$ in $G/H$ is compact and discrete, and therefore finite. It follows that $G/H$ is finite, and thus $[G:H] < \infty$.
\end{proof}
\begin{corollary}
    Let $G$ be a hyperbolic TDLC group and let $H$ be an open quasiconvex subgroup of $G$ such that $\Lambda(H)=\partial G$. Then, $H$ has finite index in $G$.
\end{corollary}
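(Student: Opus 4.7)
The plan is to apply Proposition~\ref{prop:finite-index} with the choice $Y=\theta(\partial G)$, the space of distinct triples in the Gromov boundary. Since $\partial G$ is compact metrizable, $\theta(\partial G)$ is locally compact Hausdorff. By Proposition~\ref{prop:uniform-convergence-action}, $G$ acts on $\partial G$ as a uniform convergence group, which by definition is precisely the statement that the induced $G$-action on $\theta(\partial G)$ is both proper and cocompact. The subgroup $H$ is open in $G$ by assumption, so to invoke Proposition~\ref{prop:finite-index} and conclude $[G:H]<\infty$, the only remaining task is to verify that $H$ itself acts cocompactly on $\theta(\partial G)$.

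To produce the cocompact $H$-action, I would show that $H$ acts as a uniform convergence group on $\partial G$. Since $H$ is quasiconvex in $G$, Lemma~\ref{lemma:char-qc-in-terms-conical} (more precisely, the converse direction of its proof) shows that every limit point of $H$ is a conical limit point in the dynamical sense of Definition~\ref{defn:conical-limit-point-1} for the action $H\curvearrowright\partial G$; since $\Lambda(H)$ is closed in $\partial G$ and $H$-invariant, each such point is also conical for the restricted action $H\curvearrowright\Lambda(H)$. Theorem~\ref{theorem:uniform-convergence-charaterisation} then upgrades this to the statement that $H$ acts as a uniform convergence group on $\Lambda(H)$. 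Now invoking the hypothesis $\Lambda(H)=\partial G$, we obtain $\theta(\Lambda(H))=\theta(\partial G)$, so $H$ acts properly and cocompactly on $\theta(\partial G)$, as required.

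With both actions verified, Proposition~\ref{prop:finite-index} applies directly and yields $[G:H]<\infty$. There is no genuine obstacle here: every ingredient (the uniform convergence action of $G$, local compactness of $\theta(\partial G)$, the conical-limit-point characterization of quasiconvexity, and the finite-index criterion) has been established earlier in the paper. The only point deserving explicit care is the translation between the geometric definition of conical limit point used in the proof of Lemma~\ref{lemma:char-qc-in-terms-conical} and Definition~\ref{defn:conical-limit-point-1}, but this translation is exactly what the forward implication in the proof of Lemma~\ref{lemma:char-qc-in-terms-conical} carries out, so no additional work is required.
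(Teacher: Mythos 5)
Your argument is correct for non-elementary $G$, and it reaches Proposition~\ref{prop:finite-index} by a genuinely different route than the paper. The paper applies Proposition~\ref{prop:finite-index} with $Y$ equal to a Cayley-Abels graph $X$ of $G$: since $\Lambda(H)=\partial G$, the weak quasiconvex hull of $\Lambda(H)$ (the union of geodesic lines in $X$ joining pairs of limit points) is coarsely all of $X$, and quasiconvexity of $H$ makes the $H$-action on that hull, hence on $X$, cocompact. You instead take $Y=\theta(\partial G)$ and argue dynamically: quasiconvexity gives, via the two directions of (the proof of) Lemma~\ref{lemma:char-qc-in-terms-conical} together with Theorem~\ref{theorem:uniform-convergence-charaterisation}, that $H$ acts as a uniform convergence group on $\Lambda(H)=\partial G$, i.e.\ properly and cocompactly on $\theta(\partial G)$, while Proposition~\ref{prop:uniform-convergence-action} supplies the proper cocompact $G$-action. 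Your route avoids introducing the weak quasiconvex hull altogether and stays entirely on the boundary, which is arguably cleaner given that the convergence-group machinery is already set up; the paper's route is more elementary and geometric.

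One caveat you should address explicitly: your choice $Y=\theta(\partial G)$ only works when $\partial G$ has at least three points. If $G$ is elementary then $\theta(\partial G)=\phi$, every open subgroup acts cocompactly on it vacuously, and Proposition~\ref{prop:finite-index} is simply false for $Y=\phi$ (its proof picks a point of $gK$, which does not exist). So the degenerate cases need a separate sentence: if $\partial G=\phi$ then $G$ is compact and any open subgroup has finite index, and if $\partial G$ has two points one falls back on a direct geometric argument of the kind the paper uses. With that addendum the proof is complete.
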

\begin{proof}
    Let $X$ be a Cayley-Abels graph of $G$. Then, $G$ acts properly and cocompactly on $X$. Since $\Lambda(H)=\partial G$, weak quasiconvex hull of $\Lambda(H)$ (the union of all geodesic lines in $X$ joining a pair of limit points of $H$) is equal to $X$. Since $H$ is quasiconvex, $H$ acts cocompactly on the weak quasiconvex hull of $\Lambda(H)$ and thus it acts cocompactly on $X$. Now, the corollary follows from Proposition \ref{prop:finite-index}.
\end{proof}

\subsection{Height of a subgroup of a TDLC group}
The notion of the height of a subgroup of a discrete group was introduced by Gitik-Mitra-Rips-Sageev \cite{GMRS}. In the same paper, they proved that quasiconvex subgroups of a discrete hyperbolic group have finite height. We adapt this notion in our setting and prove the same result for hyperbolic TDLC groups.

\begin{definition}\label{defn:height}
Let $G$ be a TDLC group and $H$ be a subgroup of $G$. The {\em height} of $H$ in $G$, denoted by ${\rm ht}_G(H)$, is the maximum number $n$ for which there exist $n$ distinct cosets $g_1H, g_2H,\cdots,g_nH$ such that the intersection $\bigcap_{i=1}^ng_i^{-1}Hg_i$ is non-compact. If $H$ is compact, then we define ${\rm ht}_G(H)$ to be $0$.
\end{definition}
Note that $ht_G(H)=1$ if and only if $H$ is weakly malnormal.

\begin{proposition}\label{prop:qc-implies-finite-height}
    If $H$ is an open quasiconvex subgroup of a TDLC group $G$. Then ${\rm ht}_G(H)$ is finite.
\end{proposition}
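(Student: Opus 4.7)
The plan is to argue by contradiction, bounding the size $n$ of any collection of distinct cosets $g_1 H, \ldots, g_n H$ with non-compact intersection $K := \bigcap_{i=1}^n g_i^{-1} H g_i$, by a constant depending only on $H$ and $G$. First I would observe that each $g_i^{-1} H g_i$ is open (conjugation is a self-homeomorphism of $G$) and $D$-quasiconvex with the same $D$ as the $H$-orbit $V_H = H v_0$ in a Cayley--Abels graph $\Gamma_G$, because the isometric translate $g_i^{-1} V_H$ is itself a $D$-quasiconvex subset and is the $g_i^{-1} H g_i$-orbit of $g_i^{-1} v_0$. Lemma~\ref{lemma:intersection-of-qc-is-qc} then gives that $K$ is open quasiconvex, and iterating the limit set intersection property (Proposition~\ref{prop:limit-set-intersection-property}) yields
\[
  \Lambda(K) \;=\; \bigcap_{i=1}^n g_i^{-1}\Lambda(H).
\]
Since $K$ is non-compact and open, the $K$-stabilizer of $v_0$ is compact open, so the orbit $Kv_0$ in the locally finite graph $\Gamma_G$ is infinite and unbounded; in particular $\Lambda(K)$ is non-empty.

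The heart of the argument uses the uniform convergence group action supplied by Proposition~\ref{prop:uniform-convergence-action}. In the main case, when both $H$ and $K$ are non-elementary, I would choose three distinct points $\xi, \eta, \zeta \in \Lambda(K)$; since these lie in $g_i^{-1}\Lambda(H)$ for every $i$, the triples $(g_i \xi, g_i \eta, g_i \zeta)$ all lie in $\theta(\Lambda(H))$. By Proposition~\ref{prop:uniform-convergence-action} applied to $H$, the group $H$ acts cocompactly on $\theta(\Lambda(H)) = \theta(\partial H)$, so fixing a compact subset $C \subset \theta(\partial G)$ that contains a fundamental domain we obtain $h_i \in H$ with $h_i g_i (\xi, \eta, \zeta) \in C$. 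Properness of the $G$-action on $\theta(\partial G)$ then makes the set $\{g \in G : g(\xi, \eta, \zeta) \in C\}$ relatively compact in $G$, so the entire collection $\{h_i g_i\}_{i=1}^n$ lies in a fixed compact subset of $G$. Since $H$ is open, this subset meets only finitely many left cosets of $H$; as the cosets $h_i g_i H = g_i H$ are pairwise distinct by assumption, $n$ is bounded by the number of such cosets.

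The remaining cases, where $H$ or $K$ is elementary, I would dispatch directly. If $H$ is compact the height is $0$. If $H$ is non-compact elementary with $\Lambda(H) = \{p, q\}$, then any $g \in G$ with $g^{-1} H g \cap H$ non-compact must satisfy $g\{p,q\} \cap \{p,q\} \neq \emptyset$---otherwise the limit sets of $H$ and $g^{-1} H g$ would be disjoint, forcing $H \cap g^{-1} H g$ to be compact by Proposition~\ref{prop:limit-set-intersection-property}---so the relevant cosets lie in $G_p \cup G_q$, which meets only finitely many $H$-cosets because $G_p$ and $G_q$ are $2$-ended. When $H$ is non-elementary but $K$ is elementary, an analogous pair-stabilizer argument bounds $n$.

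The main obstacle I anticipate is the uniformity of the quasiconvexity constant for the conjugates $g_i^{-1} H g_i$: working through the conjugation quasi-isometry $\phi_{g_i}$ of Lemma~\ref{lemma:conjugation-qi} gives constants depending on $g_i$, which would be fatal to the argument above. This is bypassed by using the orbit of $g_i^{-1} v_0$ rather than of $v_0$, i.e., by working with the isometric translate $g_i^{-1} V_H$ whose quasiconvexity constant is exactly that of $V_H$.
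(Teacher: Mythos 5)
Your main case is a genuinely different argument from the paper's: you run the height bound through the dynamics on the triple space (cocompactness of $H\curvearrowright\theta(\Lambda(H))$ plus properness of $G\curvearrowright\theta(\partial G)$), whereas the paper argues geometrically --- it takes a geodesic line $l$ joining two points of $\Lambda(\bigcap_i g_i^{-1}Hg_i)$, notes that $l$ lies in a uniform $D'$-neighbourhood of each quasiconvex set $g_i^{-1}Hv_0$, and then uses local finiteness of the Cayley--Abels graph to bound the number of cosets whose orbit meets a fixed ball $B_{D'}(v)$. In the regime where both $H$ and $K=\bigcap_i g_i^{-1}Hg_i$ are non-elementary, your argument is correct (modulo a left/right slip: $h_ig_iH\neq g_iH$ in general; what is preserved is $Hh_ig_i=Hg_i$, which is the coset that actually determines the conjugate $g_i^{-1}Hg_i$, so the count still goes through).

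The genuine gap is in the elementary cases, which your strategy cannot avoid and which you only gesture at. The most serious is the case where $H$ is non-elementary but $K$ is two-ended, so that $\Lambda(K)$ has exactly two points and no triple is available. The ``analogous pair-stabilizer argument'' does not work by analogy: $G$ acts properly on $\theta(\partial G)$ but \emph{not} on the space of distinct pairs of $\partial G$ --- the stabilizer of a pair $(p,q)$ is typically non-compact (it contains $K$ itself) --- so the set $\{g: g(p,q)\in C_2\}$ is not relatively compact and the crucial conclusion that $\{h_ig_i\}$ lies in a fixed compact set fails. (Concretely, if $g_i$ ranges over the non-compact group $G_{\{p,q\}}$, the elements $h_ig_i$ escape every compact set.) The case of two-ended $H$ likewise rests on unproved claims that boundary-point stabilizers are two-ended and meet only finitely many $H$-cosets. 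All of these cases can be repaired, but the natural repair is exactly the paper's device: since $|\Lambda(K)|\geq 2$ always, a geodesic line between two limit points lies uniformly close to every translate $g_i^{-1}Hv_0$, and a fixed vertex on it meets only finitely many cosets' orbits. As written, your proof is incomplete without that (or an equivalent) argument for the elementary cases.
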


\begin{proof}
 Suppose $H$ is $D$-quasiconvex for some $D\geq 0$ and $X$ is a Cayley-Abels graph of $G$. Let $g_1H, g_2H,\cdots, g_mH$ be $m$ distinct cosets such that $\bigcap_{i=1}^mg_i^{-1}Hg_i$ is non-compact. We will show that there exists an $n_0\in\mathbb N$ (independent of $m$) such that $m\leq n_0$. By Lemma \ref{lemma:intersection-of-qc-is-qc}, $\bigcap_{i=1}^mg_i^{-1}Hg_i$ is quasiconvex, and thus it is a hyperbolic TDLC group by Lemma \ref{lemma:properties-of-qc}. Since $\bigcap_{i=1}^mg_i^{-1}Hg_i$ is non-compact, the Gromov boundary of $\bigcap_{i=1}^mg_i^{-1}Hg_i$ contains at least two points \cite[\S 3.1,\S 8.2]{gromov-hypgps} (see also \cite[p.2904]{caprace-mood-amenable-hyp}. Let $l$ be a geodesic line in $X$ joining two points in $\Lambda(\bigcap_{i=1}^mg_i^{-1}Hg_i)$. By Proposition \ref{prop:limit-set-intersection-property}, we have $\Lambda(\bigcap_{i=1}^mg_i^{-1}Hg_i)=\cap_{i=1}^m\Lambda(g_i^{-1}Hg_i)=\cap_{i=1}^m\Lambda(g_i^{-1}H).$ Let $v\in l$ be a vertex. Since $g_i^{-1}H$ is also $D$-quasiconvex, there exists $D'=D'(D,\delta)$ such that $v$ belongs to the $D'$-neighborhood of $g_i^{-1}H$ for $1\leq i\leq m$.
Since the balls of finite radius in $X$ have finitely many vertices, there is a finite number $n_0$ of left cosets of $H$ which can intersect this ball; and hence the ${\rm ht}_G(H)$ is at most $n_o$, as required.
\end{proof}
\section{Combination of convergence TDLC groups}\label{section:construction-of-Gromov-boundary}
This section aims to give a proof of Theorem \ref{theorem:acyl-combination-theorem}. We follow the technique developed by Dahmani in \cite{dahmani-comb}. As we have mentioned in Remark \ref{remark:sufficient-to-prove-amalgam-hnn}, it is sufficient to consider the case of an amalgamated free product and an HNN extension. We start this section with the following fact, which we will use later in the section.

\begin{lemma}\label{prop:dahmnai-1.8-prop} Let $G$ be a hyperbolic TDLC group and let $H$ be an open quasiconvex subgroup of $G$. If $\{g_n\}$ is a sequence of elements of $G$ all in distinct cosets of $H$, then there is a subsequence $\{g_{n_k}\}$ of $\{g_n\}$ such that $g_{n_k}\Lambda(H)$ converges uniformly to a point.
\end{lemma}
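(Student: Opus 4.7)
The plan is to adapt Dahmani's strategy to the TDLC setting. Fix a Cayley-Abels graph $\Gamma_G$ with base vertex $v_0=U$, and let $D\geq 0$ be such that the orbit $Hv_0$ is $D$-quasiconvex in $\Gamma_G$; then every translate $g_nHv_0$ is also $D$-quasiconvex, since $G$ acts by isometries. For each $n$, I would pick a vertex $v_n$ of $g_nHv_0$ that minimises the distance to $v_0$, and set $d_n:=d_{\Gamma_G}(v_0,v_n)$. The strategy is to show that along a subsequence $d_n\to\infty$, to extract a subsequential limit $\xi\in\partial G$ of $v_{n_k}$ using compactness of the Gromov compactification, and then to use quasiconvexity together with projection-to-quasiconvex-set estimates in hyperbolic geometry to force $g_{n_k}\Lambda(H)$ into shadows from $v_0$ through $v_{n_k}$ that shrink to $\xi$.

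The key and genuinely TDLC-specific step is establishing $d_n\to\infty$. Suppose some subsequence $v_{n_k}$ remained inside a ball of finite radius about $v_0$. Local finiteness of $\Gamma_G$ lets me pass to a further subsequence on which $v_{n_k}=v$ is constant, and for each $k$ there is $h_k\in H$ with $g_{n_k}h_kv_0=v$, i.e.\ $g_{n_k}h_k\in\mathrm{Stab}_G(v)$. Now $\mathrm{Stab}_G(v)$ is a compact open conjugate of $U$, and because $H$ is open in $G$, the intersection $\mathrm{Stab}_G(v)\cap H$ is open in $\mathrm{Stab}_G(v)$ and therefore of finite index. Consequently the double coset $\mathrm{Stab}_G(v)H$ decomposes into only finitely many left $H$-cosets; but each $g_{n_k}H$ lies in this double coset, contradicting the hypothesis that the $g_nH$ are pairwise distinct. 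I regard this as the main obstacle, since it is precisely where compactness of stabilisers has to be replaced by openness of $H$ inside a compact open stabiliser.

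Once $d_n\to\infty$, compactness of $\overline{\Gamma_G}$ provides a subsequence, still denoted $v_{n_k}$, converging to some $\xi\in\partial G$. For the uniform convergence statement, let $\eta\in g_{n_k}\Lambda(H)$ and choose $w_m\in g_{n_k}Hv_0$ with $w_m\to\eta$. Since $g_{n_k}Hv_0$ is $D$-quasiconvex and $v_{n_k}$ is a closest point of it to $v_0$, the standard fact in a $\delta$-hyperbolic space that a geodesic from a point to a quasiconvex set passes within a constant $R=R(\delta,D)$ of any closest projection gives that each $[v_0,w_m]$, and in the limit the ray $[v_0,\eta)$, meets $B_R(v_{n_k})$. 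Hence $g_{n_k}\Lambda(H)$ is contained in the shadow of $B_R(v_{n_k})$ seen from $v_0$, and since $v_{n_k}\to \xi$ the visual diameter of these shadows tends to zero. This yields uniform convergence of $g_{n_k}\Lambda(H)$ to the single point $\xi$, as required.
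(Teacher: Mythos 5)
Your proposal is correct and follows essentially the same route as the paper: quasiconvexity of the translated orbits $g_nHv_0$, divergence of the distance from the base point to these orbits, and the shrinking visual diameter of far-away quasiconvex sets. The paper merely asserts the divergence ("Note that the distance \dots tends to infinity") and cites \cite{GMRS} for the visual-diameter estimate, whereas you supply proofs of both steps; in particular your compact-open-stabiliser argument (openness of $H$ inside the compact open stabiliser forces $[\mathrm{Stab}_G(v):\mathrm{Stab}_G(v)\cap H]<\infty$, so only finitely many cosets $g_nH$ could keep $v_n$ bounded) is exactly the TDLC-specific input the paper leaves implicit.
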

\begin{proof}
    Let $\Gamma_G=\Gamma_G(K,U,A)$ be the Cayley-Abels graph of $G$. Since $H$ is quasiconvex in $G$, the $H$-orbit of $U$ is quasiconvex in $\Gamma_G$. This implies that, for each $n$, the $g_nH$-orbit of $U$ is quasiconvex in $\Gamma_G$. Note that the distance from $U$ to the $g_nH$-orbit of $U$ tends to infinity as $n\to\infty.$ Now, the lemma follows from a fact on the visual diameter of quasiconvex subsets of a hyperbolic space, see \cite[Lemma 2.4]{GMRS}.
\end{proof}

Let $G=A\ast_C B$ or $A\ast_C$, where $A, B$ are hyperbolic TDLC groups and $C$ is an open quasiconvex subgroup of $A$ and $B$. By Lemma \ref{lemma:properties-of-qc}, $C$ is a hyperbolic group, and, by Lemma \ref{lemma:qc-implies-CT}, there is a topological embedding from the Gromov boundary of $C$ to the Gromov boundaries of $A$ and $B$. Let $T$ be the Bass-Serre tree of $G$. For each $v\in V(T)$, we denote by $X_v$ the Gromov boundary of the $G$-stabilizer $G_v$ of $v$. Similarly, for each edge $e\in E(T)$, we denote by $X_e$ the Gromov boundary of the $G$-stabilizer $G_e$ of $e$. 
\subsection{Construction of the candidate space}
Let $\tau$ be a subtree of $T$ such that $\tau$ is a fundamental domain of $T$. Let $V(\tau)$ denote the set of vertices of $\tau$. Note that, for each vertex $v\in V(\tau)$, the group $G_v$ is a hyperbolic TDLC group. Therefore, $G_v$ acts as a uniform convergence group on $X_v$. We set $\Omega$ to be $G \times(\bigsqcup_{v\in V(\tau)}X_v)$ quotiented by the natural relation
$$(g_1,x_1) = (g_2,x_2) \text{ if }  \exists v\in V(\tau), x_i\in X_v,g_2^{-1}g_1 \in G_v, (g_2^{-1}g_1)x_1 =x_2 \text{ for } i=1,2.$$

Thus, $\Omega$ is the disjoint union of the Gromov boundaries of the stabilizers of the vertices of $T$. Also, for each $v\in V(\tau)$, the space $X_v$ naturally embeds in $\Omega$ as the image of $\{1\}\times X_v$. Hence, we identify it with its image. The group $G$ naturally acts on the left on $\Omega$. For $g\in G$, $gX_v$ is the compact metrizable space on which the vertex stabilizer $G_{gv}$ acts as a uniform convergence group. Each edge allows us to glue together the Gromov boundaries of the adjacent vertex stabilizers along the limit set of the stabilizer of the edge. By our assumption, each edge group is embedded as a quasiconvex subgroup in the adjacent vertex groups. Let $e$ = $(v_1,v_2)$ be the edge in $\tau$. Then, by our assumption, there exist $G_e$-equivariant embeddings $f_{e,v_i}:X_e \to X_{v_i}$ for $i=1,2$. Similar maps are defined by translating the edges in $T\setminus \tau$.

Now, we define an equivalence relation $\sim$ on $\Omega$ that is the transitive closure of the following: 

Let $v$ and $v'$ be vertices of $T$. The points $x \in X_v$ and $x' \in X_{v'}$ are equivalent in $\Omega$ if there is an edge $e$ between $v$ and $v'$ and a point $x_e \in X_e$ satisfying $x = f_{e,v}(x_e)$ and $x' = f_{e,v'}(x_e)$ simultaneously. Let $\Omega/_{\sim}$ be the quotient under this relation and let $\pi:\Omega \to \Omega/_{\sim}$ be the quotient map. We denote an equivalence class $[x]$ of an element $x\in \Omega$ by $x$ itself.

Let $\partial T$ be the (visual) boundary of the tree. We define $X$ as a set: 
$$X = \partial T \sqcup (\Omega/_{\sim})$$
 
\begin{definition}[Domain]\label{domain}
	For each $x \in \Omega/_{\sim}$, we denote the {\em domain} of $x$ by $D(x)$ and define $D(x):= \{v\in V(T) : x\in \pi(X_v)\}$. For $\xi\in\partial T$, the domain of $\xi$ is defined to be $\{\xi\}$ itself.
\end{definition}

\begin{proposition}\label{prop:domians-uniformly-bounded}
    For each $x\in \Omega/_\sim$, $D(x)$ is a locally finite uniformly bounded subtree of $T$.
\end{proposition}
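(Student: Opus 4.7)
The plan is to verify the three claimed properties separately. For the subtree (connectedness) property, I would take $v, v' \in D(x)$ and unwind the transitive closure $\sim$ to obtain a chain of adjacent vertices $v = v_0, v_1, \ldots, v_n = v'$ carrying gluing data $y_j \in X_{e_j}$ (where $e_j$ is the edge from $v_j$ to $v_{j+1}$) such that consecutive gluings match in the appropriate vertex boundary; each $v_j$ is then automatically in $D(x)$. To show the geodesic in $T$ from $v$ to $v'$ lies in $D(x)$, I would eliminate any backtracking from the chain: if $v_{j-1} = v_{j+1}$, then $e_{j-1} = e_j$ as edges of $T$, the matching condition at $v_j$ reads $f_{e_j, v_j}(y_{j-1}) = f_{e_j, v_j}(y_j)$, and injectivity of $f_{e_j, v_j}$ (which follows from quasiconvexity of $G_{e_j}$ in $G_{v_j}$ via Proposition \ref{prop:CT-injective-iff-qc}) gives $y_{j-1} = y_j$, allowing the removal of $v_j$ and $v_{j+1}$ from the chain. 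Iterating until no backtracking remains produces a geodesic chain in $D(x)$ joining $v$ to $v'$.

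For uniform boundedness, the core idea is that a long geodesic inside $D(x)$ forces a large non-compact pointwise stabilizer, which acylindricity forbids. Concretely, given a geodesic segment $v_0, v_1, \ldots, v_L$ in $D(x)$ with edge groups $G_{e_0}, \ldots, G_{e_{L-1}}$, I would show by induction on $L$ that $A_L := \bigcap_{i=0}^{L-1} G_{e_i}$ is open and quasiconvex in every $G_{v_j}$ for $0 \le j \le L$, and that the representative $x_j \in X_{v_j}$ of $x$ lies in $\Lambda_{v_j}(A_L)$. In the inductive step, both $A_L$ and $G_{e_L}$ are open quasiconvex in $G_{v_L}$, so the limit set intersection property (Proposition \ref{prop:limit-set-intersection-property}) places $x_L$ in $\Lambda_{v_L}(A_{L+1})$, and Lemma \ref{lemma:intersection-of-qc-is-qc} gives quasiconvexity of $A_{L+1}$ in $G_{v_L}$. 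Transporting this to $X_{v_{L+1}}$ uses the compatibility of the Cannon--Thurston maps along the chain of quasi-isometric embeddings $\Gamma_{A_{L+1}} \hookrightarrow \Gamma_{G_{e_L}} \hookrightarrow \Gamma_{G_{v_{L+1}}}$ together with injectivity of $f_{e_L, v_{L+1}}$. It follows that $A_L$ has non-empty limit set, hence is non-compact, and acylindricity (Definition \ref{defn:acyl-action}) then forces $L \le k$, where $k$ is the acylindricity constant. The main obstacle I expect is exactly this bookkeeping at the inductive step: preserving quasiconvexity when intersections are taken across different vertex groups, and verifying that the limit-point data transports faithfully from $X_{v_L}$ to $X_{v_{L+1}}$.

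Local finiteness I would prove by contradiction. Suppose $v \in D(x)$ has infinitely many neighbors in $D(x)$. Since $\mathcal Z$ is finite, only finitely many $G_v$-conjugacy classes of edge subgroup arise at $v$, so by pigeonhole one such class, with representative $G_{e_0}$, accounts for infinitely many neighbors. These correspond to distinct cosets $\{g_i G_{e_0}\}_{i \ge 1}$ in $G_v$, and for each $i$ we have $x_v \in \Lambda_v(g_i G_{e_0} g_i^{-1})$. Iterating the limit set intersection property on any finite subfamily $\{g_1, \ldots, g_N\}$ (using that conjugates of quasiconvex subgroups remain quasiconvex) places $x_v$ in $\Lambda_v\bigl(\bigcap_{i=1}^N g_i G_{e_0} g_i^{-1}\bigr)$, so this finite intersection is non-compact. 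Choosing $N$ strictly larger than the height of $G_{e_0}$ in $G_v$, which is finite by Proposition \ref{prop:qc-implies-finite-height}, contradicts the definition of height.
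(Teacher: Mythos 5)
Your proposal is correct and, for the two properties the paper actually proves, follows essentially the same route: an induction using the limit set intersection property (Proposition \ref{prop:limit-set-intersection-property}) and Lemma \ref{lemma:intersection-of-qc-is-qc} to produce a non-compact pointwise stabilizer of any geodesic in $D(x)$, hence a bound via acylindricity, and a pigeonhole-plus-finite-height argument (Proposition \ref{prop:qc-implies-finite-height}) for local finiteness. The one difference is that you also prove the subtree (connectedness) claim, which the paper's proof omits; your backtracking-removal argument works, though it can be shortened, since any walk in a tree between $v$ and $v'$ already visits every vertex of the geodesic $[v,v']$, and each visited vertex lies in $D(x)$ by definition of the transitive closure.
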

\begin{proof}
    First of all, we show that the pointwise stabilizer of a finite subtree of $D(\xi)$ is non-compact. In fact, we show that if $\{v_1,\cdots,v_n\}$ is a set of vertices of a finite subtree of $D(\xi)$, then there exists a subgroup $H$ of $G$ which is embedded in $G_{v_i}$ as quasiconvex subgroup and $\xi\in \Lambda(H).$ We prove it by induction on $n$. If $n=1$, then we can take $H$ to be equal to the vertex stabilizer. Suppose the result is true for any finite subtree of $D(\xi)$ whose number of vertices is strictly less than $n$. Up to reindexing, let $v_n$ be the vertex of the subtree of valence $1$ and $v_{n-1}$ be the adjacent vertex joined by the edge $e$. Then, by inductive hypothesis, for $1\leq i\leq n-1$, let $H_{n-1}$ be the subgroup embedded in $G_{v_i}$ as a quasiconvex subgroup and $\xi\in \Lambda(H_{n-1})$. Since $v_n\in D(\xi)$, $\xi\in \Lambda(G_e).$ This implies that $\xi\in \Lambda(H_{n-1})\cap \Lambda(G_e)$ and thus, by Proposition \ref{prop:limit-set-intersection-property}, $\xi\in \Lambda(H_{n-1}\cap G_e)$. Taking $H$ gives us the desired result. Now, since the pointwise stabilizer of every finite subtree of $D(\xi)$ is non-compact, the diameter of $D(\xi)$ is bounded above by the acylindricity constant. Let $v$ be a vertex in $D(\xi)$. Suppose $\{e_n\}_n$ is an infinite sequence of edges in $D(\xi)$ adjacent to $v$. Then, up to passing to a subsequence, there exists an edge $e\in D(\xi)$ and a sequence $\{g_n\}_n$ in $G_v$ such that $\xi\in g_n\Lambda(G_e)$. This implies that $\bigcap g_n^{-1}G_eg_n$ is non-compact and we get to a contradiction by Proposition \ref{prop:qc-implies-finite-height}. 
\end{proof}
\subsection{Topology on $X$} We define a family $\{W_n(x)\}_{n\in \mathbb{N},x\in X}$ of subsets of $X$, which generates a topology on $X$. For a vertex $v\in T$ and an open subset $U$ of $X_v$, we define the subtree $T_{v,U}$ of $T$ as $$T_{v,U}:=\{w\in V(T) : X_e\cap U\neq \emptyset\}$$ 
where $e$ is the first edge of the geodesic $[v,w]\subset T$. For each vertex $v$ in $T$, let us choose $\mathcal B_v$, a countable basis of open neighborhoods of $X_v$. Without loss of generality, we can assume that, for all $v$, the collection of open subsets $\mathcal B_v$ contains $X_v$. Let $x\in\Omega/_\sim$ and let $D(x) = \{v_1,v_2,\dots\} = \{v_i\}_{i\in I\subset \mathbb{N}}$. For each $i\in I$, let $U_i$ be an element of $\mathcal B_{v_i}$ containing $x$ such that, for all but finitely many indices $i\in I, U_i=X_{v_i}$. We define the set $W_{(U_i)_{i\in I}}(x)$ as 
$$W_{(U_i)_{i\in I}}(x):=A \cup B\cup C$$
where $A,B,C$ are defined as follows.

{\em Description of $A,B, C$ in words:} The set $A$ is the collection of all points in the intersection of boundaries of the subtrees $T_{v_i, U_i}$. Note that $\partial (\bigcap_{i\in I}T_{v_i,U})=\bigcap_{i\in I}\partial T_{v_i,U}$. The set $B$ is the collection of all points that lie outside $\bigcup_{i\in I}X_{v_i}$ in $\Omega/_\sim$ and the domains of such points lie inside $\bigcap_{i\in I}T_{v_i,U_i}$. The set $C$ contains all elements in $\bigcup_{i\in I}X_{v_i}$ that belong to $\bigcap_{j\in J\subseteq I}U_j$ for some $J\subseteq I$.

In notations $A,B,C$ are defined as follows:
\begin{align*}
	A &= \bigcap_{i\in I} \partial T_{v_i,U_i} \\
	B &= \{y \in (\Omega/_\sim)\setminus(\bigcup_{i\in I}X_{v_i}):D(y) \subset \bigcap_{i\in I}T_{v_i,U_i}\}\\
	C &=\{y \in \bigcup_{i\in I}X_{v_i}: y\in\bigcap_{j\in J}U_j \text{ for some }J\subseteq I\}
\end{align*}

Observe that $A$, $B$, and $C$ are disjoint subsets of $X$. Next, we define neighborhoods around the points of $\partial T$. Let $\eta\in \partial T$ and fix a base point $v_0$ in $T$. Firstly, we define the subtree $T_m(\eta)$ as: $$T_m(\eta):=\{v\in V(T): l([v_0,w]\cap [v_0,\eta))>m\}$$ We set $$W_m(\eta):= \{\zeta \in X | D(\zeta) \subset\overline{T_m(\eta)}\}$$ where $\overline{T_m(\eta)}$ denotes the closure of $T_m(\eta)$ in $T\sqcup\partial T$. This definition does not depend on the choice of base point $v_0$, up to shifting the indices.

\begin{remark}
	The set $W_{(U_i)_{i\in I}}(x)$ is completely defined by the data of the domain of $x$, the data of a finite subset $L$ of $I$, and the data of an element of $\mathcal B_{v_l}$ for each index $l\in L$. Therefore, there are only countably many different sets  $W_{(U_i)_{i\in I}}(x)$, for $x\in \Omega'$, and $U_i\in \mathcal B_{v_i}, v_i\in D(x)$. For each $x$, we choose an arbitrary order and denote it by $W_m(x)$.
\end{remark}

Consider the smallest topology $\mathcal T$ on $X$ such that the family of sets $\{W_n(x):x\in X, n\in\mathbb{N}\}$ are open subsets of $X$. The proof of the following lemma follows from the proof of the lemmas in \cite[Subsection 2.4]{dahmani-comb}. Thus, we skip its proof.
\begin{lemma}\label{lemma:avoiding an edge}
    We have the following.
    \begin{enumerate}
    \item Let $x\in X$ and let $e$ be an edge of $T$ such that at least one vertex of $e$ is not in $D(x)$. Then, there exists $W_n(x)$ such that $X_e\cap W_n(x)=\emptyset$.
        \item The family of sets $\{W_n(x): n\in \mathbb{N} \text{ and }x\in X\}$ forms a basis for the topology $\mathcal T$ on $X$.
        \item $(X,\mathcal T)$ is a second countable Hausdorff regular space.
        \item $(X,\mathcal T)$ is compact.
    \end{enumerate}
\end{lemma}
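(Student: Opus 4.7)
The overall strategy is to verify the four assertions in order, using (1) as a technical tool for (2)--(4). For (1), I split into cases based on whether $x$ lies in $\partial T$ or in $\Omega/_\sim$. If $x\in\partial T$, then any $y\in X_e$ has $D(y)\supseteq\{v_1,v_2\}$ where $v_1,v_2$ are the endpoints of $e$, so taking $m$ large enough that $T_m(x)$ misses at least one endpoint of $e$ forces $D(y)\not\subseteq\overline{T_m(x)}$ and hence $y\notin W_m(x)$. If $x\in\Omega/_\sim$ with $v_1\notin D(x)$, pick $v_i\in D(x)$ adjacent to $v_1$ (if any); the representative of $x$ in $X_{v_i}$ lies outside the closed set $\Lambda(G_e)=f_{e,v_i}(X_e)$, so there is $U_{v_i}\in\mathcal B_{v_i}$ with $x\in U_{v_i}$ disjoint from $\Lambda(G_e)$. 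Setting $U_j=X_{v_j}$ for all other $v_j\in D(x)$, each of the three pieces $A,B,C$ of the corresponding basic set misses $X_e$: $A\subseteq\partial T$; no point of $X_e$ lies in $B$ because it already sits in some $X_{v_j}$ with $v_j\in D(x)$; and $C$ excludes $X_e$ because $U_{v_i}\cap f_{e,v_i}(X_e)=\emptyset$. If no vertex of $e$ is adjacent to $D(x)$, one can already separate $X_e$ from $x$ using the subtree condition built into $B$.

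For (2), I verify the two basis conditions: every point lies in some basic set (take $U_j=X_{v_j}$ for all $j\in I$ when $x\in\Omega/_\sim$), and if $z\in W_n(x)\cap W_m(y)$ then some $W_k(z)$ fits inside the intersection. The latter is obtained by intersecting the defining neighbourhoods around $z$ inside each relevant $X_v$ implicit in $W_n(x)$ and $W_m(y)$, together with applications of part (1) to trim the finitely many edges needed to respect the subtree constraints coming from $x$ and $y$. Second countability in (3) follows because each $\mathcal B_v$ is countable, the domains $D(x)$ are finite by Proposition \ref{prop:domians-uniformly-bounded}, and only finitely many $U_i$ are allowed to differ from $X_{v_i}$, so there are only countably many basic sets up to the $G$-action on $\Omega$. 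Hausdorff-ness is handled case-by-case on the relative positions of two distinct points in $T$ and in the relevant $X_v$, combined with part (1). Regularity reduces to showing that each $W_n(x)$ contains some $W_{n'}(x)$ whose closure lies inside $W_n(x)$; this uses the closedness of $\Lambda(G_e)$ inside each $X_{v_i}$ together with the closedness of subtree-boundaries in $T\cup\partial T$.

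Compactness, assertion (4), is the main obstacle. Since $X$ is second countable, it suffices to show that every sequence $\{x_n\}\subset X$ admits a convergent subsequence. I analyse how the domains $D(x_n)$ sit in $T$: if they all meet a fixed finite subtree, then the compactness of each vertex boundary $X_v$ together with the gluing identifications along edges yields, by a standard diagonal argument, a subsequence converging to a point of $\Omega/_\sim$. If instead the domains escape to infinity in $T$, I pass to a subsequence whose domains are indexed by pairwise distinct cosets of the stabilizer of some edge $e$, and invoke Proposition \ref{prop:dahmnai-1.8-prop} on the quasiconvex subgroup $G_e$: the translated boundaries shrink uniformly to a single point, and the corresponding rays in $T$ converge to an end $\eta\in\partial T$. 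The main technical point is then to verify that $x_n\to\eta$ in the topology $\mathcal T$, i.e.\ that $D(x_n)\subseteq\overline{T_m(\eta)}$ eventually for each $m$; this combines the uniform boundedness of domains from Proposition \ref{prop:domians-uniformly-bounded} with the acylindricity hypothesis to prevent $D(x_n)$ from extending far from the escaping geodesic.
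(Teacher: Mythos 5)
Your proposal follows the same route the paper takes: the paper gives no self-contained proof but defers parts (1)--(3) to Dahmani's Subsection 2.4 and part (4) to his Theorem 2.10, substituting Lemma \ref{prop:dahmnai-1.8-prop} for Dahmani's Proposition 1.8, and your sketch is an expansion of exactly that argument (separation via closed edge-boundaries inside vertex boundaries, sequential compactness via second countability, case split on whether the domains stay near a finite subtree or escape).

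One step of part (1) would fail as written. When no vertex of $e$ lies in $D(x)$, you claim the edge is ``already'' excluded by the subtree condition built into $B$; but if every $U_j$ equals $X_{v_j}$ then each $T_{v_j,X_{v_j}}$ is (generically) all of $T$, so $B$ excludes nothing and $C$ contains all of $\bigcup_j X_{v_j}$. The correct move in every case is: let $v_i$ be the vertex of the subtree $D(x)$ closest to $e$ and let $e'$ be the \emph{first} edge of the geodesic from $v_i$ toward $e$ (so $e'=e$ only when $e$ meets $D(x)$); since the far endpoint of $e'$ is not in $D(x)$, the representative of $x$ in $X_{v_i}$ avoids the compact set $f_{e',v_i}(X_{e'})$, and shrinking $U_i$ to miss that set kills both the $B$-part (the vertices of $e$ leave $T_{v_i,U_i}$) and the $C$-part (any $y\in X_e$ with $D(y)\cap D(x)\neq\emptyset$ must lie in $X_{e'}$, hence outside $U_i$). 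Note also that $f_{e,v_i}$ is only defined when $v_i$ is an endpoint of $e$, so your formula $\Lambda(G_e)=f_{e,v_i}(X_e)$ for a merely adjacent $v_i$ does not parse. A smaller point on (4): when the domains escape through infinitely many distinct edges at a fixed vertex $v$, Lemma \ref{prop:dahmnai-1.8-prop} produces a limit in $X_v\subset\Omega/_\sim$, not in $\partial T$; the $\partial T$ limits arise in the separate case where the geodesics $[v_0,D(x_n)]$ contain longer and longer initial segments of a fixed ray. With these corrections your outline is the paper's intended proof.
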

The proof of the fact that $(X,\mathcal T)$ is compact is similar to the proof of \cite[Theorem 2.10]{dahmani-comb}. The only difference is that, in his proof, Dahmani used \cite[Proposition 1.8]{dahmani-comb}, and we can use Lemma \ref{prop:dahmnai-1.8-prop} in place of that proposition. It follows from the previous lemma that the topology $\mathcal T$ on $X$ is metrizable. Also, from the definition of neighborhood on $X$, we see that every point of $X$ is a limit point, and hence it is a perfect space. 

For a compact metric space $Z$, we denote by $\dim(Z)$ the topological dimension of $Z$. We end this subsection by noting the following, which is an upper bound on the topological dimension of $X.$ The proof of this fact is similar to \cite[Theorem 2.11]{dahmani-comb}. We leave it for the reader.

\begin{theorem}
    $\dim(X)\leq \max_{v,e}\{\dim(X_v),\dim(X_e)+1\}.$
\end{theorem}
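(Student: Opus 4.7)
The plan is to adapt the covering argument from Dahmani \cite[Theorem 2.11]{dahmani-comb}. Set $M := \max_{v,e}\{\dim(X_v),\,\dim(X_e)+1\}$. Since $X$ is compact and metrizable by Lemma \ref{lemma:avoiding an edge}, it suffices to show that every finite open cover $\mathcal{V}$ of $X$ admits an open refinement of order at most $M+1$.

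First I would reduce to a finite subtree of $T$. Each $\eta\in\partial T$ has a basic neighborhood $W_m(\eta)$ refining $\mathcal{V}$; by compactness of $\partial T$, finitely many such neighborhoods cover $\partial T$. Using Lemma \ref{lemma:avoiding an edge}(1), together with the uniform boundedness of domains from Proposition \ref{prop:domians-uniformly-bounded}, these $W_m$-neighborhoods can be chosen so that their complement in $X$ is contained in $\bigcup_{v\in V(T_0)}\pi(X_v)$ for some finite subtree $T_0\subset T$. After shrinking, the $W_m$-neighborhoods can be taken pairwise disjoint outside a neighborhood of $T_0$, so they contribute multiplicity at most one outside the vertex boundaries.

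The central step is to refine $\mathcal{V}$ on the finite union $\bigcup_{v\in V(T_0)}\pi(X_v)$ with order at most $M+1$, respecting the identifications along the edge boundaries. I would process $T_0$ by induction on its edges. For a leaf edge $e$ of the current subtree with terminal vertex $v$, first refine $\mathcal{V}|_{X_e}$ to a cover of order at most $\dim(X_e)+1$, and then extend it to a refinement of $\mathcal{V}|_{\pi(X_v)}$ of order at most $\dim(X_v)+1$ using the standard extension theorem for open covers on normal spaces. Iterating this edge by edge yields a compatible refinement on the finite union, which together with the $W_m$-neighborhoods of Step~1 produces the desired refinement of $\mathcal{V}$.

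The main obstacle is the bookkeeping at the gluing loci. At a point of $X_e\subset\pi(X_v)\cap\pi(X_{v'})$, the refinements on the two sides coincide on $X_e$ and each contributes at most one additional set extending off the gluing, so the multiplicity at such a point is at most $\dim(X_e)+2\le M+1$. At a vertex $v\in V(T_0)$ where several edges meet, the successive extensions must be shrunk carefully so that refinements coming from distinct edges do not pile up inside $\pi(X_v)$; this is where the finiteness of $T_0$ is used crucially, and the argument proceeds exactly as in \cite[Theorem 2.11]{dahmani-comb}.
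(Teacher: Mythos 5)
Your proposal follows essentially the same route as the paper, which itself only remarks that the proof is similar to Dahmani's Theorem~2.11 and leaves the details to the reader: your covering-refinement sketch (reduce to a finite subtree $T_0$ via finitely many $W_m(\eta)$ together with the uniform boundedness of domains, then refine edge-by-edge so that the order is at most $\dim(X_e)+2$ at the gluing loci and at most $\dim(X_v)+1$ elsewhere) is exactly the intended adaptation. The resulting cover has order at most $M+1$ with $M=\max_{v,e}\{\dim(X_v),\dim(X_e)+1\}$, which gives the claimed bound.
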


\vspace{.2cm}

{\bf Proof of Theorem \ref{theorem:acyl-combination-theorem}:} From the proof of \cite[Lemma 3.1 and Lemma 3.2]{dahmani-comb}, we see that $G$ acts as a convergence group on $X$. Now, using Theorem \ref{theorem:uniform-convergence-charaterisation}, it is sufficient to show that every point of $X$ is conical. This follows from the proof of \cite[Lemma 3.4 and Lemma 3.5]{dahmani-comb}. By \cite[Lemma 2.1 and Corollary 2.8]{dahmani-comb}, the natural map from $X_v$ to $X$ is injective and continuous. Therefore, we have an injective CT map for the pair $(G_v,G).$ Hence, by Proposition \ref{prop:CT-injective-iff-qc}, $G_v$ is quasiconvex in $G$.    \qed

\vspace{.2cm}

We end this subsection with the following example that gives an example of a non-compact weakly malnormal subgroup of a hyperbolic TDLC group.

\begin{example}\label{example:malnormal-example}
    Let $D$ be an infinite discrete hyperbolic group and let $C$ be a totally disconnected compact non-discrete group. Then, with the product topology, $G:=D\times C$ is a non-discrete TDLC group. By Remark \ref{remark:compact-extension}, $G$ is hyperbolic. Let $M_D$ be an infinite weakly malnormal quasiconvex subgroup of $D$ (such a subgroup always exists when $D$ is torsion-free, see for example \cite{I-kapovich-malnormal-qc}) and let $M_C$ be a compact open subgroup of $C$. It is easy to check that $M=M_D\times M_C$ is a weakly malnormal open quasiconvex subgroup of $G$. Consider the double $G\ast_M \overline{G}$, where $\overline{G}$ is the isomorphic copy of $G$. Now, by Theorem \ref{theorem:acyl-combination-theorem}, $G\ast_M\overline{G}$ is a hyperbolic TDLC group, and $G,\overline{G}$ are quasiconvex subgroups of $G\ast_M \overline{G}$.
\end{example}
\subsection{Application to height in splitting of hyperbolic TDLC groups} The main goal of this subsection is to give a proof of Theorem \ref{theorem:height-splitting}. First of all, we prove the following general lemma, which we will use in the proof of the theorem.
\begin{lemma}\label{finite height implies acyl}
	 Let $(\mathcal G,\mathcal Z)$ be a finite graph of topological groups with fundamental group $G$. If all the edge groups of $(\mathcal G,\mathcal Z)$ have finite height in $G$, then the action of $G$ on the Bass-Serre tree of $(\mathcal G,\mathcal Z)$ is acylindrical.
\end{lemma}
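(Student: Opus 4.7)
The plan is to reduce the pointwise stabilizer of a long geodesic in the Bass-Serre tree $T$ to an intersection of conjugates of edge groups, and then to bound its length via the finite-height hypothesis together with a pigeonhole argument across the finitely many edge orbits. Since $\mathcal Z$ is finite, the edges of $T$ split into finitely many $G$-orbits, one for each edge group of $(\mathcal G,\mathcal Z)$; write the edge groups as $C_1,\ldots,C_r$ with heights $h_i := \mathrm{ht}_G(C_i)$, set $N := \max_i h_i$, and let $k := rN$. I would aim to show that the action of $G$ on $T$ is $k$-acylindrical.

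Given a geodesic segment in $T$ with consecutive edges $e_1,\ldots,e_n$ where $n > k$, I would first express its pointwise stabilizer as $\bigcap_{j=1}^n \mathrm{Stab}(e_j)$, using that in a tree an element fixes every vertex of a path if and only if it fixes every edge of that path (an edge stabilizer is contained in the stabilizers of both endpoints, and conversely fixing both endpoints of an edge fixes the edge itself). Next, I would partition $\{1,\ldots,n\}$ according to $G$-orbit type: let $J_i$ be the set of indices $j$ for which $e_j = g_j \cdot e^{(i)}$ for a chosen orbit representative $e^{(i)}$ with stabilizer $C_i$, so that $\mathrm{Stab}(e_j) = g_j C_i g_j^{-1}$ for every $j \in J_i$.

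The key observation is that within each orbit, distinct edges correspond to distinct cosets of the associated edge group: $g_j \cdot e^{(i)} = g_{j'} \cdot e^{(i)}$ forces $g_{j'}^{-1} g_j \in C_i$. Thus if $\bigcap_{j=1}^n \mathrm{Stab}(e_j)$ were non-compact, then for each orbit type $i$ with $J_i \neq \emptyset$ the (larger) intersection $\bigcap_{j \in J_i} g_j C_i g_j^{-1}$ would also be non-compact, and the definition of height (Definition \ref{defn:height}) would force $|J_i| \leq h_i \leq N$. Summing over $i$ would yield $n = \sum_{i=1}^r |J_i| \leq rN = k$, contradicting $n > k$. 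Hence the pointwise stabilizer of any geodesic of length greater than $k$ must be compact, and the action is $k$-acylindrical. I do not anticipate a serious obstacle; the only point that needs care is aligning the paper's convention for height (distinct cosets $g_iH$ together with non-compactness of $\bigcap g_i^{-1} H g_i$) with the natural parameterization of edge stabilizers as conjugates $g_j C_i g_j^{-1}$, which amounts to a routine relabeling $g \mapsto g^{-1}$.
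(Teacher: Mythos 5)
Your proposal is correct and follows essentially the same route as the paper's proof: write the pointwise stabilizer of a long geodesic as the intersection of the edge stabilizers, note there are only finitely many edge orbits, and use the pigeonhole principle together with the finite-height hypothesis to bound the length by $k=rN$. Your write-up is in fact more careful than the paper's, since you make explicit both the pigeonhole over orbit types and the fact that distinct edges in one orbit yield distinct cosets of the corresponding edge group (which is needed to invoke Definition \ref{defn:height}), points the paper's proof passes over silently.
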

\begin{proof}
	Since $(\mathcal G,\mathcal Z)$ is a finite graph of groups, there exists $n_0\in\mathbb{N}$ such that the height of each edge group of $(\mathcal G,\mathcal Z)$ is at most $n_0$. Let $T$ denote the Bass-Serre tree of $(\mathcal G,\mathcal Z)$. The $G$-stabilizer of a geodesic $\alpha$ in $T$ is the intersection of the $G$-stabilizers of the edges on $\alpha$. Suppose, if possible, the action of $G$ on $T$ is not acylindrical. Then, given $k\in\mathbb{N}$, there is a geodesic in $T$ with length bigger than $k$ and the $G$-stabilizer of the geodesic is non-compact. Choose $k$ sufficiently larger than $n_0$, and let $\beta$ be a geodesic segment of length bigger than $k$ with $G$-stabilizer non-compact. Since there are finitely many edge groups, the $G$-stabilizer of $\beta$ is contained in the intersection of more than $n_0$-conjugates of an edge group in $G$. This gives a contradiction as the height of each edge group is at most $n_0$.
\end{proof}
Proof of Theorem \ref{theorem:height-splitting}: If $C$ is quasiconvex in $G$, then $C$ has finite height by Proposition \ref{prop:qc-implies-finite-height}. Conversely, suppose $C$ has finite height in $G$. Then, by Lemma \ref{finite height implies acyl}, the action of $G$ on its Bass-Serre tree is acylindrical. Now, by Theorem \ref{theorem:acyl-combination-theorem}, $A$ and $B$ are quasiconvex in $G$. Hence, $C$ is quasiconvex in $G$. This completes the proof of the theorem.    \qed
\section{Combination of locally quasiconvex hyperbolic TDLC groups}\label{section:combination-of-hyperbolic-TDLC-groups}
This section is devoted to giving a proof of Theorem \ref{theorem:combination-of-locally-qc}, its applications, and examples.

{Proof of Theorem \ref{theorem:combination-of-locally-qc}:} To prove the theorem, by induction, it is sufficient to consider the amalgamated free product and HNN extension case. In the rest of the proof, let $G$ denote either an amalgamated free product $A\ast_C B$  or an HNN extension $A\ast_C$, and $(\mathcal G,\mathcal Z)$ denote the corresponding graph of groups with fundamental group $G$. By Theorem \ref{theorem:acyl-combination-theorem}, $G$ is a hyperbolic TDLC group. Let $T$ be the Bass-Serre tree of $G$. Let $H$ be an open compactly generated subgroup of $G$. First, suppose $H$ fixes a vertex of $T$. Then, $H$ is conjugate to a subgroup of $A$ or $B$. Since $A$ and $B$ are locally quasiconvex and, by Theorem \ref{theorem:acyl-combination-theorem}, $A$ and $B$ are quasiconvex in $G$, $H$ is quasiconvex in $G$.  

Suppose now that $H$ does not fix a vertex of $T$. Then, $H\cap C$ is an open TDLC subgroup of $H$. Let $U$ be a compact open subgroup of $H\cap C$ and let $e$ be an edge of $T$ stabilized by $U$. Since $H$ is compactly generated, there exists a finite set $S$ such that $S\cup U$ generates $H$. Let $T'$ be the minimal connected subtree of $T$ containing the set of edges $\{se:s\in S\}$. Since $S$ is finite, $T'$ is a finite subtree of $T$. Since $S\cup U$ generates $H$, it follows that $T_H:=\bigcup_{h\in H}hT'$ is an $H$-invariant subtree of $T$. From the construction of $T_H$, it is immediate that $H$ acts on $T_H$ discretely (pointwise stabilizers of edges are open) and cocompactly on $T_H$. Thus, $H$ admits a finite graphs of groups decomposition $(\mathcal G,\mathcal Y)$, where $\mathcal Y$ is the finite quotient subgraph of $T_H$. Note that the vertex groups of $(\mathcal G,\mathcal Y)$ are the intersection of $H$ with the conjugates of $A$ and $B$ in $G$. Similarly, the edge groups $(\mathcal G,\mathcal Y)$ are the intersection of $H$ with the conjugates of $C$ in $G$. Now, by our assumption, the edge groups of $(\mathcal G,\mathcal Y)$ are compactly generated and $H$ is also compactly generated, the vertex groups of $(\mathcal G,\mathcal Y)$ are compactly generated \cite[Proposition 8.5]{arora-pedroja}. Since $A$ and $B$ are locally quasiconvex, the vertex and edge groups of $(\mathcal G,\mathcal Y)$ are quasiconvex subgroups of the corresponding vertex and edge groups of $(\mathcal G,\mathcal Z).$ Since the action of $G$ on $T$ is acylindrical, the action of $H$ on $T_H$ is also acylindrical. Thus, by Theroem \ref{theorem:acyl-combination-theorem}, $H$ is a hyperbolic group and the vertex groups of $(\mathcal G,\mathcal Y)$ are quasiconvex in $H$. Let $X_1$ be the Gromov boundary of $H$ as constructed in Section \ref{section:construction-of-Gromov-boundary}.

{\em Claim:} $X_1$ is the limit set of $H$, i.e. $X_1$ is the minimal closed $H$-invariant subset of $X$.

Once we have a proof of the claim, then we have an injective CT map for the pair $(H,G)$. Hence, by Proposition \ref{prop:CT-injective-iff-qc}, $H$ is quasicovex in $G$. This completes the proof of the theorem. 

{\em Proof of the claim:} First, note that $X_1$ is an $H$-invariant subset of $X$. We now show that $X_1$ is closed in $X$. To show that, we prove that the complement $X_1^c$ of $X_1$ in $X$ is open. Let $x\in X_1^c$. There are two possibilities. Suppose $x\in X_v$ for some vertex $v\in T_H^c$, where $T_H^c$ denotes the complement of $T_H$ in $T$. Let $\alpha$ be the geodesic connecting $T_H$ and the domain $D(x)$ of $x$. Then, there exists an edge $e$ on $\alpha$ such that both the vertices of $e$ are not in $D(x)$. Thus, by Lemma \ref{lemma:avoiding an edge}(1), there exists a neighborhood $W_n(x)$ such that $X_e\cap W_n(x)=\phi$. This implies that $W_n(x)\subset X_1^c$. Similarly, if $\eta\in \partial T$ and $\eta$ is not a boundary point of $T_H$ then, using Lemma \ref{lemma:avoiding an edge}(1), there exists a neighborhood $W_n(\eta)$ such that $W_n(\eta)\subset X_1^c$. Suppose $x\in X_u$ for some $u\in T_1$. For a vertex $u\in T_H$, let $X_1^u$ denote the Gromov boundary of the vertex group of $(\mathcal G,\mathcal Y)$ at the vertex $u$. Then, $x\notin X_1^u$ for all $u\in T_H$. Since $x\in X_1^c$, the intersection of $D(x)$ and the set of vertices of $T_H$ is finite. Let $D(x)=\{v_i\}_{i\in I}$ and $D(x)\cap V(T_H)=\{u_1,\dots,u_l\}$ for some $l\in\mathbb{N}$. Now, in each $X_{u_i}$, choose a neighborhood $U_i$ around $x$ which is disjoint from $X_1^{u_i}$ and such that, for any edge $e\in T_H$ adjacent to $u_i$, $X_1^{u_i}$ and $U_i$ does not intersect $X_e$ simultaneously (such a neighborhood is possible to choose using Lemma \ref{prop:dahmnai-1.8-prop}). Now, using these $U_i$ and the definition of neighborhoods, it follows that there exists a neighborhood $W_n(x)$ such that $W_n(x)\subset X_1^c$. Thus, $X_1$ is a closed subset of $X$. This implies that the limit set of $H$ is a subset of $X_1$. Since every point of $X_1$ is a limit point of $H$, the limit set of $H$ is equal to $X_1$. Hence the claim.    

For the converse, suppose $(\mathcal G,\mathcal Z)$ satisfies conditions (1), (2), and $G$ is locally quasiconvex. For $v\in V(\mathcal Z)$, let $H$ be a compactly generated subgroup of the vertex group $G_v$. Let $\Gamma_H,\Gamma_{G_v},$ and $\Gamma_G$ be Cayley-Abels graphs of $H,G_v,$ and $G$, respectively. Let $i:\Gamma_H\to \Gamma_{G_v}$ and $j:\Gamma_{G_v}\to \Gamma_G$ are embeddings. By Lemma \ref{lemma:proper-embedding}, $j$ is a proper embedding. Since $G$ is locally quasiconvex, $H$ is quasiconvex in $G$. This implies that $j\circ i$ is a qi embedding (Lemma \ref{lemma:properties-of-qc}(3)). Then, by \cite[Lemma 2.1(1)]{sardar-tomar}, $i$ is a qi embedding. Hence, by Lemma \ref{lemma:properties-of-qc}(2), $H$ is quasiconvex in $G_v.$

From Theorem \ref{theorem:combination-of-locally-qc}, we have the following.
 
\begin{corollary}\label{corollary:locally-qc-compact-edge-groups}
     Let $(\mathcal G,\mathcal Z)$ be a finite graph of groups such that the vertex groups are hyperbolic TDLC and the edge groups are compact. Then, the fundamental group of $(\mathcal G,\mathcal Z)$ is locally quasiconvex if and only if the vertex groups are locally quasiconvex in $G$. \qed
\end{corollary}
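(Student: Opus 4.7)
The strategy is to deduce the corollary as a direct application of Theorem \ref{theorem:combination-of-locally-qc}, after checking that the compactness of the edge groups forces the auxiliary hypotheses of that theorem to hold for free.

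The first step is to verify that when every edge group is compact, conditions (2), (3), and (4) of Theorem \ref{theorem:combination-of-locally-qc} are automatic. For (2), a compact subgroup $G_e$ of a hyperbolic TDLC vertex group $G_v$ has bounded orbit on the vertex set of a Cayley--Abels graph of $G_v$ by Lemma \ref{lemma:cosets of cpt set bounded}, and a bounded subset is trivially quasiconvex; hence $G_e$ is quasiconvex in the adjacent vertex groups. For (3), the $G$-stabilizer of an edge of the Bass--Serre tree $T$ is a conjugate of an edge group of $(\mathcal G,\mathcal Z)$, and conjugation is a homeomorphism so these stabilizers are compact; as the pointwise stabilizer of any geodesic in $T$ of length at least one is contained in such an edge stabilizer, it is compact, so the action of $G$ on $T$ is $0$-acylindrical. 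For (4), conjugates of the (compact) edge groups in $G$ are compact, so their intersections with any subgroup of $G$ are compact, and in particular compactly generated.

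For the forward direction, assume that every vertex group $G_v$ is locally quasiconvex hyperbolic TDLC. Then condition (1) of Theorem \ref{theorem:combination-of-locally-qc} is the hypothesis and (2)--(4) are automatic by the previous paragraph, so Theorem \ref{theorem:combination-of-locally-qc} yields that $G$ is locally quasiconvex hyperbolic TDLC. For the converse, assume $G$ is locally quasiconvex; in particular $G$ is a hyperbolic TDLC group built as an iterated amalgamation/HNN extension of the hyperbolic TDLC vertex groups over compact edge groups, and (2) holds as above. The converse part of Theorem \ref{theorem:combination-of-locally-qc} then applies essentially verbatim: for any compactly generated $H \le G_v$, the composition $\Gamma_H \to \Gamma_{G_v} \to \Gamma_G$ is a quasiisometric embedding by local quasiconvexity of $G$ and Lemma \ref{lemma:properties-of-qc}(3), while $\Gamma_{G_v}\to \Gamma_G$ is a proper embedding by Lemma \ref{lemma:proper-embedding}; a standard cancellation (as used in the proof of Theorem \ref{theorem:combination-of-locally-qc}) shows that $\Gamma_H\to \Gamma_{G_v}$ is itself a qi embedding, so $H$ is quasiconvex in $G_v$ by Lemma \ref{lemma:properties-of-qc}(2).

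I do not foresee any genuine obstacle: the corollary is a clean specialization of Theorem \ref{theorem:combination-of-locally-qc}. The only real work is to record why conditions (2)--(4) collapse when the edge groups are compact, and to observe that the converse direction of Theorem \ref{theorem:combination-of-locally-qc} only consumes the hyperbolicity/TDLC structure of the vertex groups together with local quasiconvexity of $G$, so it applies here without circularity.
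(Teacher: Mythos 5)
Your proposal is correct and is essentially the paper's intended argument: the paper derives this corollary directly from Theorem \ref{theorem:combination-of-locally-qc}, noting (in the introduction) that compactness of the edge groups makes the auxiliary hypotheses automatic, exactly as you verify. You are in fact slightly more thorough than the paper in also checking condition (4) and in flagging that the converse direction avoids circularity because it only uses hyperbolicity of the vertex groups and local quasiconvexity of $G$; the one small imprecision is the claim that the intersection of a compact conjugate of an edge group with \emph{any} subgroup is compact (this needs the subgroup to be closed, which holds in the relevant case since the subgroups considered are open).
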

An immediate consequence of Corollary \ref{corollary:locally-qc-compact-edge-groups} is the following, which demonstrates that hyperbolic TDLC groups that are quasiisometric to trees are locally quasiconvex.

\begin{corollary}\label{corollary:quasiisometric-to-tree-locally-qc}
    Suppose $G$ is a hyperbolic TDLC group such that its Cayley-Abels graph is quasiisometric to a locally finite tree. Then, $G$ is locally quasiconvex.   
\end{corollary}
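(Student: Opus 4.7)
My plan is to reduce the statement to Corollary~\ref{corollary:locally-qc-compact-edge-groups} by realizing $G$ as the fundamental group of a finite graph of topological groups whose vertex and edge groups are all compact. Once such a splitting is in hand, each vertex group is a compact TDLC group and is trivially locally quasiconvex: its Cayley--Abels graph may be taken to be a single vertex, so every subgroup is vacuously quasiconvex. Corollary~\ref{corollary:locally-qc-compact-edge-groups} then immediately delivers the local quasiconvexity of $G$.

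To produce the desired splitting I would proceed as follows. By Theorem~\ref{milnor-svarc-lemma} (or directly from the definition of Cayley--Abels graphs), $G$ acts properly and cocompactly on $\Gamma_G$. Composing this action with a quasiisometry $\Gamma_G \to T_0$, where $T_0$ is the given locally finite tree, yields a cobounded quasi-action of $G$ on $T_0$. Invoking a Stallings--Dunwoody style accessibility theorem in the TDLC setting (in the spirit of Kr\"on--M\"oller's analysis of ends of compactly generated TDLC groups, together with a quasi-action rigidity argument on trees analogous to Mosher--Sageev--Whyte), this quasi-action can be promoted to a genuine continuous, proper, cocompact isometric action of $G$ on a locally finite simplicial tree $T$. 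Since the action is proper and $T$ is locally finite, every vertex and edge stabilizer is a compact open subgroup of $G$. Bass--Serre theory applied to the finite quotient graph $\mathcal Z := G \backslash T$ then identifies $G$ with $\pi_1(\mathcal G,\mathcal Z)$ for a finite graph of topological groups whose vertex and edge groups are precisely these compact stabilizers, as required.

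The main obstacle is this conversion from a quasi-action on $T_0$ to a genuine action on a simplicial tree $T$; this is the substitute, in our setting, for the combination of Dunwoody accessibility and quasi-action rigidity available in the discrete case. If one wishes to avoid invoking such machinery wholesale, one can split into cases by the number of ends of $G$: if $G$ is zero-ended then it is compact and there is nothing to prove; if it is two-ended then $G$ is compact-by-$\mathbb Z$ and a direct argument using the structure of $\Gamma_G$ shows that every open compactly generated subgroup is either compact or two-ended and coarsely dense along a quasigeodesic line, hence quasiconvex; and the infinitely-ended case falls under a TDLC analogue of Stallings' end theorem, which directly supplies the graph-of-compact-groups decomposition that feeds into Corollary~\ref{corollary:locally-qc-compact-edge-groups}.
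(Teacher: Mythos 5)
Your proposal is correct and follows essentially the same route as the paper: reduce to a splitting of $G$ as a finite graph of compact groups and then invoke Corollary \ref{corollary:locally-qc-compact-edge-groups}. The splitting you flag as ``the main obstacle'' (promoting the quasi-action on the tree to a genuine cocompact action with compact stabilizers) is exactly the content of \cite[Theorem C]{mathieu-dennis-locally-compact-convergence}, which the paper cites directly, so none of the quasi-action rigidity or accessibility machinery needs to be redeveloped.
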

\begin{proof}
    Since $G$ is quasiisometric to a locally finite tree, $G$ splits as a finite graph of compact groups by \cite[Theorem C]{mathieu-dennis-locally-compact-convergence}. Then, $G$ satisfies all the hypotheses of Theorem \ref{theorem:combination-of-locally-qc}. Hence, $G$ is a locally quasiconvex hyperbolic TDLC group.
\end{proof}
\begin{remark}[Compact extension of locally quasiconvex TDLC groups]\label{remark-locally-qc-comapct-extension}
    Let $G$ be a compactly generated TDLC group and $N$ be a compact open normal subgroup of $G$ such that $G/N$ is locally quasiconvex (note that $G/N$ is a discrete group). By Remark \ref{remark:compact-extension}, $G/N$ is finitely generated and $G/N$ has a Cayley graph which is $\Gamma_{G}$ itself. Now, let $V$ be an open subgroup of $G$ and $V=\langle K \rangle$, $K$ is compact in $V$. Then $\{vN:v \in V\}$ is a finitely generated subgroup of $G/N$, since the natural projection map $G \to G/N$ is surjective and continuous, and $G/N$ is discrete. As $G/N$ is locally quasiconvex, $\{vN:v \in V\}$ is quasiconvex in $\Gamma_{G}$. Hence, $G$ is locally quasiconvex.
\end{remark}

\begin{example}\label{example-local-qc}
    (1) Let $d\geq 2$ and let $Aut(T_d)$ denote the automorphism group of the regular tree $T_d$. Then, Cayley-Abels graphs of $Aut(T_d)$ are quasiisometric to $T_d$. Hence, by Corollary \ref{corollary:quasiisometric-to-tree-locally-qc}, $Aut(T_d)$ is locally quasiconvex.

    (2) Let $\mathbb Q_p$ be a group of $p$-adic numbers. Then, it is well-known that ${\rm SL}(2,\mathbb Q_p)$ splits as an amalgamated free product of compact open subgroups \cite{serre-trees}. Hence, by Corollary \ref{corollary:quasiisometric-to-tree-locally-qc}, $\rm{SL}(2,\mathbb Q_p)$ is locally quasiconvex.

    (3) Let $G$ be as in Example \ref{example:normalizer-centralizer}. Then, by \cite{combination-theorem}, $G$ is a hyperbolic TDLC group whose Cayley-Abels graph is quasisometric to a locally finite tree. Then, by Corollary \ref{corollary:quasiisometric-to-tree-locally-qc}, $G$ is a locally quasiconvex group.

    (4) Let $G$ and $M$ be as in Example \ref{example:malnormal-example}. Assume further that $D$ is locally quasiconvex. Then, by Remark \ref{remark-locally-qc-comapct-extension}, $G$ is locally quasiconvex. Then, using Theorem \ref{theorem:combination-of-locally-qc}, we see that $G\ast_M\overline{G}$ is locally quasiconvex.
\end{example}
Given two finitely generated subgroups of a hyperbolic group, it is natural to ask when they will generate a quasiconvex subgroup. In this direction, using the geometry of the Cayley graph of the given hyperbolic group, Gitik \cite{gitik-ping-pong} gave a sufficient condition under which the subgroup generated by two quasiconvex subgroups is isomorphic to an amalgamated free product, and is again quasiconvex.  
Let $\Gamma_G=\Gamma_G(K,U,A)$ be the Cayley-Abels graph of $G$ and $d_A$ be the word metric on $V(\Gamma_G).$ For $g\in G$, by the {\em word length} of $g$, we mean $d_A(U,gU)$. Using the geometry of a Cayley-Abels graph of a compactly generated group, we have an exact analog of Gitik's result in the TDLC setup. The proof of the following theorem follows from the proof \cite[Theorem 1]{gitik-ping-pong}. Hence, we skip its proof.

\begin{theorem}\label{theorem:gitik-qc-combination}.
For $\delta\geq 0,D\geq 0$, let $G$ be a $\delta$-hyperbolic TDLC group, and $A,B\le G$ be open $D$-quasiconvex subgroups.
Let $G_0:=A\cap B$.
Then, there exists a constant $C_0=C_0(G,\delta,D)$ such that the following hold:

\begin{enumerate}
\item
If $A_1\leq A$ and $B_1\leq B$ are open subgroups with $A_1\cap B_1=G_0$, and all nontrivial elements of $A_1$ and $B_1$ of word-length $<C_0$ lie in $G_0$, then 
$\langle A_1,B_1\rangle\;\cong\; A_1 *_{G_0}B_1.$
\item
If $A_1$ and $B_1$ are quasiconvex in $G$, then $\langle A_1,B_1\rangle$ is quasiconvex in $G$. \qed
\end{enumerate}    
\end{theorem}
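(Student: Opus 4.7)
The plan is to transplant Gitik's ping-pong argument from the discrete hyperbolic setting into the TDLC framework, working inside a Cayley-Abels graph $\Gamma_G$ with base vertex $v_0 = U$ and word metric $d$. The orbits $\mathcal{A} := A v_0$ and $\mathcal{B} := B v_0$ are $D$-quasiconvex, and the coarse intersection of their $D$-neighbourhoods is controlled by the $G_0$-orbit via Proposition \ref{prop:limit-set-intersection-property} and Lemma \ref{lemma:intersection-of-qc-is-qc}; moreover $\operatorname{length}(g) = d(v_0, g v_0)$.

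The first step would be to establish a \emph{bounded coarse intersection} lemma: by $\delta$-hyperbolicity together with $D$-quasiconvexity, there is a constant $R = R(\delta, D)$ such that for every $g \in G \setminus A G_0$, the set $N_D(\mathcal A) \cap N_D(g \mathcal B)$ has diameter at most $R$, and when $g = 1$ it is contained in the $R$-neighbourhood of $G_0 \cdot v_0$. This is the TDLC counterpart of the standard quasiconvex fellow-traveller estimate; it uses only slim triangles and the limit-set intersection property from Section \ref{section:quasiconvexity}, and extends verbatim from the discrete case since the proof is purely geometric.

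Fix $C_0$ sufficiently large in terms of $\delta, D, R$ (the required inflation comes from the local-to-global quasigeodesic criterion associated with Theorem \ref{stability of quasigeodesics}). For (1), take an alternating product $w = a_1 b_1 \cdots a_n b_n$ with $a_i \in A_1 \setminus G_0$ and $b_i \in B_1 \setminus G_0$, so each factor has word length $\geq C_0$ by hypothesis. Form the polygonal path $\gamma$ from $v_0$ to $w v_0$ by concatenating successive geodesic segments in $\mathcal A$, $a_1 \mathcal B$, $a_1 b_1 \mathcal A$, and so on; each segment has length $\geq C_0$ and lies in the $D$-neighbourhood of a translate of $\mathcal A$ or $\mathcal B$. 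The bounded coarse intersection lemma forces the corners of $\gamma$ to be sharp: two consecutive long segments cannot backtrack because their $D$-neighbourhoods meet only in a set of diameter $\leq R$, which is dwarfed by $C_0$. A standard local quasigeodesic criterion in $\delta$-hyperbolic spaces then certifies $\gamma$ as a $(\lambda,\epsilon)$-quasigeodesic with $\lambda,\epsilon$ depending only on $\delta, D$. In particular $\gamma$ has positive length, so $w \neq 1$, and Bass-Serre normal form yields $\langle A_1, B_1 \rangle \cong A_1 \ast_{G_0} B_1$. For (2), if $A_1, B_1$ are moreover quasiconvex in $G$, then each geodesic segment of $\gamma$ lies within a uniform neighbourhood of the $\langle A_1, B_1 \rangle$-orbit of $v_0$. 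Combined with the quasigeodesic property of $\gamma$ and Theorem \ref{stability of quasigeodesics}, any geodesic in $\Gamma_G$ between two points of $\langle A_1, B_1 \rangle v_0$ lies in a uniform neighbourhood of that orbit, giving quasiconvexity.

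The main obstacle is the bounded coarse intersection lemma and the promotion of the broken geodesic $\gamma$ to a genuine quasigeodesic with uniform constants, since Gitik's original proof proceeds by word-combinatorial manipulations while here one must argue purely geometrically inside $\Gamma_G$ and ensure the compact open subgroup $U$ does not interfere. This is precisely the role of phrasing the hypothesis via word length: any element of $A_1$ (or $B_1$) of length zero lies in $U$, hence in $U \cap A_1 \cap B_1 \subseteq G_0$, so the "short elements" condition absorbs the compact padding near the basepoint and permits the hyperbolic-geometric ping-pong to run exactly as in the discrete case.
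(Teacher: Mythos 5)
The paper does not actually write out a proof of this theorem --- it defers entirely to Gitik's argument in \cite{gitik-ping-pong} --- and your plan of re-running that ping-pong inside a Cayley--Abels graph is exactly the intended route. However, your sketch has a genuine gap at its central step. The ``bounded coarse intersection lemma'' is false as stated and, in the form you actually need at the corners, insufficient. For $g\notin AG_0$ the set $N_D(\mathcal A)\cap N_D(g\mathcal B)$ need not have bounded diameter: it is coarsely a coset of $A\cap gBg^{-1}$, which can be non-compact (e.g.\ $A=\langle a\rangle$, $B=\langle a^2,b\rangle$ in the free group $F(a,b)$ with $g=b$). More to the point, at a corner $Pa_i$ of your broken path the two relevant cosets are $P\mathcal A$ and $Pa_i\mathcal B$ with $a_i\in A_1\subseteq A$, so after translating the corner to the basepoint you are in the case $g\in AG_0$, where even the correct statement only places the intersection inside an $R$-neighbourhood of a translate of $G_0v_0$ --- an unbounded set whenever $G_0$ is non-compact. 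Your assertion that two consecutive segments ``cannot backtrack because their $D$-neighbourhoods meet only in a set of diameter $\leq R$'' therefore fails precisely in the situation the theorem is about, namely an amalgam over a possibly non-compact $G_0$.

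This is not merely a missing justification: the claimed sharpness of corners is actually false for badly chosen factors. Taking $a_i=ag_0$ and $b_i=g_0^{-1}b$ with $g_0\in G_0$ of very large word length produces admissible factors (they lie in $A_1\setminus G_0$ and $B_1\setminus G_0$ and have length $\geq C_0$) whose Gromov product at the corner is roughly $|g_0|$, so the concatenated path is nowhere near a local quasigeodesic. The missing ingredient is Gitik's reduction step: since $G_0\leq A_1\cap B_1$, one may transfer elements of $G_0$ across a corner, replacing $(a_i,b_i)$ by $(a_ig_0,g_0^{-1}b_i)$ so as to shorten both factors whenever the corner backtracks deep into $N_R(G_0v_0)$, and one must show this process terminates either in a word all of whose corners are uniformly sharp (at which point the local-to-global quasigeodesic criterion applies and $w\neq 1$) or in a factor of length $<C_0$, which by hypothesis lies in $G_0$ and collapses the syllable decomposition. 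This normal-form/shortening argument over $G_0$ is the heart of Gitik's proof and is absent from your sketch; without it neither (1) nor the quasigeodesicity needed for (2) is established. (Your treatment of part (2) given (1), and your remark that length-zero elements are absorbed into $U\cap A_1\cap B_1\subseteq G_0$, are fine once the quasigeodesic property of $\gamma$ is in place.)
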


\section{Existence of quasiisometric sections for extensions of hyperbolic TDLC groups}\label{section:existence-of-qisection}
In \cite{mosher-hypextns}, Mosher introduced the notion of a quasiisometric section for a surjective homomorphism $p: G \to H$ between finitely generated groups $G$ and $H$. A subset $\Sigma \subseteq G$ mapping onto $H$ is said to be a {\em quasiisometric section} or {\em qi section} if, there exists $k\geq 1,\epsilon\geq 0$ such that, for any $g,g' \in \Sigma$, we have $$\frac{1}{k}d_{H}(pg, pg')- \epsilon \leq d_{G}(g,g') \leq kd_{H}(pg, pg')+ \epsilon,$$ where $d_{G}$ and $d_{H}$ are word metrics with respect to some finite generating sets on $G$ and $H$, respectively. Moreover, for extensions of hyperbolic groups, Mosher proved the following.

\begin{lemma}\cite[Quasi-isometric section lemma]{mosher-hypextns}\label{qi section lemma discrete}
    Given a non-elementary hyperbolic group $H$ and a short exact sequence of finitely generated groups $1 \rightarrow H \rightarrow G \xrightarrow{p} Q \rightarrow 1$, the map $p$ has a quasiisometric section $\Sigma$. In fact, choosing a finite symmetric generating set $S$ for $G$ and letting $p(S)$ be the generating set for $H$, then for all $g,g' \in \Sigma$, $d_{H}(pg,pg')- \epsilon \leq d_{G}(g,g') \leq kd_{H}(pg,pg')+ \epsilon$ for some constants $k \geq 1, \epsilon \geq 0$.
\end{lemma}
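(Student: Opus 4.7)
The plan is to construct a section $\sigma : Q \to G$ and normalize it via the conjugation dynamics of $G$ on $\partial H$. The lower inequality $d_Q(pg,pg') \leq d_G(g,g')$ is automatic because $p$ is $1$-Lipschitz: it sends each $s \in S$ into $p(S) \cup \{1_Q\}$. For the upper bound, the triangle inequality reduces the problem to showing that $d_G(\sigma(q), \sigma(tq))$ is uniformly bounded over all $q \in Q$ and all $t \in p(S)$. Fixing a lift $\tilde t \in S \cap p^{-1}(t)$ for each generator $t$, and using that $H \triangleleft G$, any section decomposes as $\sigma(tq) = \tilde t\,\sigma(q)\,h(t,q)$ for a unique $h(t,q) \in H$; the task is to choose $\sigma$ so that $|h(t,q)|_H$ is bounded independently of $q$ and $t$.

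To arrange this bound, I would exploit the boundary dynamics of $H$. Non-elementariness forces $|\partial H| \geq 3$, and the discrete analogue of Proposition \ref{prop:uniform-convergence-action} says that $H$ acts on the space $\theta(\partial H)$ of distinct unordered triples properly and cocompactly. Fix a base triple $\tau_0 \in \theta(\partial H)$ and a compact set $K \subset \theta(\partial H)$ with $H \cdot K = \theta(\partial H)$. Conjugation induces $\phi_g \in \Aut(H)$ for every $g \in G$, which is a quasi-isometry of $\Gamma_H$ (discrete analogue of Lemma \ref{lemma:conjugation-qi}) and hence extends canonically to a self-homeomorphism of $\partial H$; importantly, for $h \in H$ the map $\phi_h$ acts on $\partial H$ simply as left-translation by $h$. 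Define $\sigma(q)$ to be any lift $g \in p^{-1}(q)$ satisfying $\phi_g(\tau_0) \in K$: starting from any $g_0 \in p^{-1}(q)$, cocompactness furnishes $h \in H$ with $h \cdot \phi_{g_0}(\tau_0) \in K$, and then $h g_0$ is such a normalized lift.

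With this choice of $\sigma$, for all $q \in Q$ and $t \in p(S)$, both $\phi_{\sigma(q)}(\tau_0)$ and $\phi_{\sigma(tq)}(\tau_0)$ lie in $K$. Applying $\phi$ to the decomposition $\sigma(tq) = \tilde t\,\sigma(q)\,h(t,q)$ and using the identification of $\phi_h$ with left-translation by $h$, one concludes that $h(t,q)$ must carry a point of the compact set $K' := \bigcup_{t \in p(S)} \phi_{\tilde t}(K)$ back into $K$. Properness of the $H$-action on $\theta(\partial H)$ then confines $h(t,q)$ to a finite subset of $H$, yielding a uniform bound on $|h(t,q)|_H$ and hence on $d_G(\sigma(q), \sigma(tq))$. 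The principal obstacle I foresee is the careful bookkeeping required to turn a ``bounded error in $H$'' into a ``bounded $d_G$-distance'' while respecting the left-versus-right convention of the Cayley graph; Mosher's original treatment threads this needle by combining the above normalization with a shortlex combing of $Q$, ensuring that the section's word-length in $G$ tracks $d_Q(1_Q, q)$ up to a uniformly bounded additive error.
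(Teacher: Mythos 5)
Your proposal is correct and takes essentially the same route as the paper: the discrete statement is only cited from Mosher, but the paper's proof of its TDLC analogue (Proposition \ref{lemma4}) uses exactly your normalization---lifts are chosen so that the conjugation homeomorphisms $\mathcal A_g$ relate a fixed point of $\theta(\partial \Gamma_H)$ to a fixed compact fundamental domain for the proper cocompact action of $H$ on $\theta(\partial \Gamma_H)$, and properness of that action then confines the $H$-discrepancy between lifts of adjacent elements of $Q$ to a compact (hence bounded) set. The only point to tidy is the left/right convention you flag yourself: with the right Cayley graph one should write $\sigma(qt)=\sigma(q)\,\tilde t\,h$ rather than $\sigma(tq)=\tilde t\,\sigma(q)\,h$, so that $d_G(\sigma(q),\sigma(qt))\le 1+|h|_G$ follows immediately once $h$ is confined to a finite set (no combing of $Q$ is needed beyond chaining this bound along a geodesic).
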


 This section aims to extend the previous lemma in the setting of hyperbolic TDLC groups. Since we work in the Cayley-Abels graphs of compactly generated TDLC groups, we first explain the meaning of a quasiisometric section for a surjective homomorphism between two such groups. Let $G$ and $Q$ be compactly generated TDLC groups and let $p:G\to Q$ be an open surjective homomorphism. As in Subsection \ref{subsection:2.2}, for a compact generating set $K$ of $G$ and a compact open subgroup $U$ of $G$, there is a finite symmetric set $A \subset G$ containing identity such that $K \subset AU, G= \langle A\rangle U$ and $AU=UAU$. Then, $p(K)$ is a compact generating set for $Q$, and it is easy to see that $p(K) \subseteq p(A)p(U), Q= \langle p(A)\rangle p(U)$, and $p(A)p(U)=p(U)p(A)p(U)$. Let $\Gamma_G(K,U,A)$ and $\Gamma_Q(p(K),p(U),p(A))$ be the Cayley-Abels graphs of $G$ and $Q$, respectively.

  \begin{definition}
     A {\em quasiisometric section} of $p$ is a subset $\Sigma \subseteq V(\Gamma_{G})$ such that $\{p(g)p(U): gU \in \Sigma\}= V(\Gamma_{Q})$, and there exists $k \geq 1$ and $\epsilon \geq 0$ such that,  for any $gU, g'U \in \Sigma$, we have: $$\frac{1}{k}d_{p(A)} (p(g)p(U), p(g')p(U))-\epsilon \leq d_A(gU, g'U) \leq k d_{p(A)} (p(g)p(U),p(g')p(U))+ \epsilon.$$
 \end{definition}

By abuse of notation, we still denote the map from $\Sigma\to V(\Gamma_Q)$ by $p$, and $p(gU)=p(g)p(U)$ for $gU\in \Sigma$. Let $H$ be a normal hyperbolic TDLC subgroup of a hyperbolic TDLC group $G$. Let $\Gamma_H$ and $\Gamma_G$ be the Cayley-Abels graphs of $H$ and $G$, respectively. By Lemma \ref{lemma:conjugation-qi}, for each $g\in G$, $\phi_g$ is a quasiisometry, and thus induces an action by homeomorphism on the Gromov boundary $\partial \Gamma_H$ of $\Gamma_H$, see \cite[Theorem 3.9, III.H]{bridson-haefliger}. Also, there is a natural left translation action of $H$ on $\Gamma_H$. For each $h\in H$, we denote this action by $t_h$ that gives an isometry of $\Gamma_H$. Thus, $t_h$ gives rise to a homeomorphism of $\partial \Gamma_H$. We record the following lemma, which follows straightforwardly. Hence, we skip its proof.

\begin{lemma}\label{lemma:same-induced-homeomorphism}
   The homeomorphisms induced by $\phi_g$ and $t_h$ on $\partial \Gamma_H$ are the same.           \qed
   \end{lemma}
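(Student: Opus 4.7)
The plan is to deduce the identification from a standard fact about boundary extensions of quasi-isometries of proper hyperbolic spaces: if $f_{1}, f_{2} \colon X \to X$ are quasi-isometries of a proper hyperbolic geodesic metric space with $\sup_{x \in X} d_{X}(f_{1}(x), f_{2}(x)) < \infty$, then they admit the same continuous extension to the Gromov boundary $\partial X$. This is implicit in the boundary functoriality statement of \cite[Theorem 3.9, III.H]{bridson-haefliger}: the induced homeomorphism on $\partial X$ depends only on the equivalence class of a quasi-isometry modulo bounded perturbation. Applied to the proper hyperbolic graph $\Gamma_{H}$ and the two quasi-isometries $\phi_{h}$ (from Lemma \ref{lemma:conjugation-qi}) and $t_{h}$ (a genuine isometry), it therefore suffices to check that, for every $h \in H$, the maps $\phi_{h}$ and $t_{h}$ are boundedly close as self-maps of $\Gamma_{H}$.

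To do this, I would first reduce to the vertex set: since $V(\Gamma_{H})$ is coarsely dense in $\Gamma_{H}$ and both $\phi_{h}$ and $t_{h}$ are defined on vertices and extended to edges via geodesics of length at most $1$ (for $t_{h}$) or uniformly bounded length (for $\phi_{h}$, by the construction preceding Lemma \ref{lemma:conjugation-qi}), a sup-bound on the vertex set will immediately yield the desired sup-bound on all of $\Gamma_{H}$. Thus the problem reduces to showing that
\[
\sup_{h' \in H}\, d_{A_{H}}\bigl(\phi_{h}(h'U_{H}),\, t_{h}(h'U_{H})\bigr) < \infty.
\]
Unwinding the definitions, this is a distance between two cosets in $H/U_{H}$ whose formulas involve $h$, $h'$, and $U_{H}$; I would left-multiply both arguments by a suitable element (using that $H$ acts on $\Gamma_{H}$ by isometries via left translation, so $d_{A_{H}}$ is left-invariant) to cancel the $h'$ factor and obtain an expression depending only on $h$ and $U_{H}$. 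The remaining constant can then be controlled by the $A_{H}$-word length of $h$ together with an application of Lemma \ref{lemma:cosets of cpt set bounded} to the compact singleton $\{h\}$; crucially, it is uniform in $h'$.

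I expect this to be a short, essentially formal verification--consistent with the paper's assertion that the lemma follows straightforwardly. The only real bookkeeping step is the left-invariance reduction in the coset calculation, combined with the standard fact that two boundedly close quasi-isometries of a proper hyperbolic space extend to the same boundary homeomorphism; once both are in place, equality of the induced homeomorphisms is immediate. I do not anticipate any serious obstacle.
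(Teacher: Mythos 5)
Your high-level strategy is the intended one (the paper skips the proof): two quasi-isometries of a proper hyperbolic space at bounded sup-distance induce the same boundary homeomorphism, so one reduces to the vertex set and bounds the sup-distance using left-invariance of $d_{A_H}$. However, the central quantitative claim of your verification is false as stated: $\phi_h$ and $t_h$ are \emph{not} boundedly close, and no left-multiplication cancels the $h'$ for that pair. Indeed, $\phi_h(h'U_H)=h^{-1}h'hU_H$ and $t_h(h'U_H)=hh'U_H$, so by left-invariance
\[
d_{A_H}\bigl(\phi_h(h'U_H),\,t_h(h'U_H)\bigr)=d_{A_H}\bigl(hU_H,\;h'^{-1}h^{2}h'U_H\bigr),
\]
which is unbounded in $h'$ in general: take $H$ a free group on $a,b$ (with $U_H$ trivial), $h=a$, $h'=b^{n}$; the distance is $|a^{-1}b^{-n}a^{2}b^{n}|=2n+3$. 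What your cancellation argument actually proves is that $\phi_h$ is boundedly close to $t_{h^{-1}}$:
\[
d_{A_H}\bigl(h^{-1}h'hU_H,\;h^{-1}h'U_H\bigr)=d_{A_H}\bigl(hU_H,\,U_H\bigr),
\]
a single finite number independent of $h'$ (so you do not even need Lemma~\ref{lemma:cosets of cpt set bounded} here). Hence $\partial\phi_h=\partial t_{h^{-1}}$, not $\partial t_h$.

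This sign discrepancy is inherited from the lemma's own imprecise statement. What is actually needed downstream (e.g.\ that $\{\mathcal A_h(UC):h\in H\}$ covers $\theta(\partial\Gamma_H)$, and that the sets $K_a$ are compact by properness of the $H$-action) is only that the family $\{\mathcal A_h\}_{h\in H}$ coincides with the family of boundary homeomorphisms induced by the left translations, i.e.\ that the two induced $H$-actions on $\partial\Gamma_H$ agree up to the inversion $h\mapsto h^{-1}$. Your proof becomes correct once you replace $t_h$ by $t_{h^{-1}}$ in the bounded-closeness claim (or phrase the conclusion at the level of the induced actions rather than map-by-map). The remaining ingredients --- reduction to vertices, the extension of edges by uniformly bounded geodesics, and the appeal to the functoriality of the boundary under bounded perturbation of quasi-isometries from \cite[Theorem 3.9, III.H]{bridson-haefliger} together with Lemma~\ref{lemma:conjugation-qi} --- are all fine.
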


We denote by $\mathcal A_g$ the homeomorphism of $\partial \Gamma_h$ induced by $\phi_g$. Now, we are ready to state an analogous result of Lemma \ref{qi section lemma discrete} in the TDLC setting.

 \begin{proposition}\label{lemma4}
     Given a non-elementary hyperbolic TDLC group $H$ and a short exact sequence of open maps of compactly generated TDLC groups $1 \rightarrow H \rightarrow G \xrightarrow{p} Q \rightarrow 1$, the map $p$ has a quasiisometric section $\Sigma$. In fact, choosing a compact generating set $K$ for $G$ and letting $p(K)$ be the compact generating set for $Q$ we have, for all $gU, g'U \in \Sigma$, $d_{p(A)} (p(g)p(U),p(g')p(U)) \leq d_A(gU, g'U)) \leq k d_{p(A)} (p(g)p(U),p(g')p(U)) + \epsilon_{0}$ for some constants $k \geq 1$ and $\epsilon_{0} \geq 0$.
 \end{proposition}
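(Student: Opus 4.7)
My plan is to adapt Mosher's construction to the TDLC setting, using the uniform convergence action of $H$ on $\partial\Gamma_H$ (Proposition \ref{prop:uniform-convergence-action}) in place of the finite-orbit arguments available in the discrete case. The lower inequality in the target QI statement is automatic: an edge of $\Gamma_G$ labeled by $a\in A$ projects to an edge of $\Gamma_Q$ labeled by $p(a)\in p(A)$ under $gU\mapsto p(g)p(U)$, so $d_{p(A)}(p(g)p(U),p(g')p(U))\le d_A(gU,g'U)$. The entire substance lies in building $\Sigma$ and proving the upper bound $d_A\le k\,d_{p(A)}+\epsilon_0$.

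To construct $\Sigma$, I fix three distinct points $\xi_1,\xi_2,\xi_3\in\partial\Gamma_H$ (possible since $H$ is non-elementary). By Proposition \ref{prop:uniform-convergence-action}, $H$ acts properly and cocompactly on the triple space $\theta(\partial\Gamma_H)$; I fix a compact $K\subset\theta(\partial\Gamma_H)$ meeting every $H$-orbit. For each $v\in V(\Gamma_Q)$ I pick any $g\in p^{-1}(v)$ and use cocompactness to choose $h\in H$ with $t_h\bigl(\mathcal A_g(\xi_1),\mathcal A_g(\xi_2),\mathcal A_g(\xi_3)\bigr)\in K$. From the definition $\phi_g(x)=g^{-1}xg$ one gets $\phi_{gh}=\phi_h\circ\phi_g$, which together with Lemma \ref{lemma:same-induced-homeomorphism} yields $\mathcal A_{gh}=t_h\circ\mathcal A_g$; hence $(\mathcal A_{gh}(\xi_i))_{i=1,2,3}\in K$. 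I set $\sigma(v):=ghU$ and $\Sigma:=\{\sigma(v):v\in V(\Gamma_Q)\}$, which by construction maps onto $V(\Gamma_Q)$ under $p$.

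For the upper bound, by the triangle inequality it suffices to find a constant $M$ bounding $d_A(\sigma(v),\sigma(v'))$ whenever $d_{p(A)}(v,v')=1$. So I write $v'=v\cdot ap(U)$ for some $a\in p(A)$ and lift to $\tilde a\in A$. Normality of $H$ in $G$, together with $\ker p=H$ and $p^{-1}(p(U))=HU$, gives $p^{-1}(ap(U))=\tilde a HU$, so $\sigma(v)^{-1}\sigma(v')=\tilde a h_0u_0$ for some $h_0\in H$ and $u_0\in U$, and as $U$-cosets $\sigma(v)^{-1}\sigma(v')U=\tilde a h_0 U$. Composing boundary maps gives $\mathcal A_{\sigma(v')}=\mathcal A_{u_0}\circ t_{h_0}\circ\mathcal A_{\tilde a}\circ\mathcal A_{\sigma(v)}$. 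Since $(\mathcal A_{\sigma(v)}(\xi_i))_i$ and $(\mathcal A_{\sigma(v')}(\xi_i))_i$ both lie in $K$, and since the sets $K':=\bigcup_{\tilde a\in A}\mathcal A_{\tilde a}(K)$ and $\bigcup_{u\in U}\mathcal A_u(K)$ are compact in $\theta(\partial\Gamma_H)$ (the latter by splitting $U$ into finitely many cosets of the open finite-index subgroup $U\cap H$ and invoking Lemma \ref{lemma:same-induced-homeomorphism} for $u\in U\cap H$), we conclude that $t_{h_0}(K')$ meets a fixed compact subset of $\theta(\partial\Gamma_H)$. Properness of $H\curvearrowright\theta(\partial\Gamma_H)$ then confines $h_0$ to a fixed compact $C\subset H$, so $\tilde a h_0\in AC$ lies in a fixed compact subset of $G$, and Lemma \ref{lemma:cosets of cpt set bounded} yields $d_A(U,\tilde a h_0 U)\le M$.

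The main technical obstacle is the careful bookkeeping linking the compact-open subgroup $U$, the normality of $H$, and the composition convention $\phi_g(x)=g^{-1}xg$. Getting the coset identity $p^{-1}(ap(U))=\tilde a HU$ correct and tracking the compositional rule $\mathcal A_{g_1g_2}=\mathcal A_{g_2}\circ\mathcal A_{g_1}$ are essential, and controlling the contribution of $U$ on $\partial\Gamma_H$ requires reducing, via $U=\bigsqcup_i u_i(U\cap H)$ and Lemma \ref{lemma:same-induced-homeomorphism}, to the proper action of $H$. Once this is in place, the extraction of the compact bound for the ``correction factor'' $h_0$ is a direct consequence of the uniform-convergence property provided by Proposition \ref{prop:uniform-convergence-action}, which plays the role of Mosher's limit-set argument in the TDLC setting.
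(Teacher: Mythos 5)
Your proof is correct and follows essentially the same route as the paper: both adapt Mosher's argument by exploiting the uniform convergence action of $H$ on $\theta(\partial\Gamma_H)$ (Proposition \ref{prop:uniform-convergence-action}), using cocompactness of that action to make $p|_{\Sigma}$ surjective and properness to bound $d_A(\sigma(v),\sigma(v'))$ when $d_{p(A)}(v,v')=1$, after writing the difference element as (edge letter)$\cdot$(element of $H$)$\cdot$(element of $U$) via normality of $H$. The only cosmetic difference is that the paper defines $\Sigma=\{gU:\xi\in\mathcal A_g(UC)\}$ for a fixed triple $\xi$ and a thickened compact fundamental domain $UC$ (absorbing the $U$-ambiguity at the outset, at the cost of $\Sigma$ being a priori multi-valued, fixed later in Remark \ref{remark:single-valued-qi-section}), whereas you dually push a fixed triple into the fundamental domain to get a single-valued section directly and handle $U$ at the end via the finitely many cosets of $U\cap H$.
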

 Let $\Gamma_H$ be the Cayley-Abels graph of $H$ and let $\partial \Gamma_H$ be the Gromov boundary of $\Gamma_H$. Let $\theta(\partial \Gamma_H)$ be the set of unordered distinct triples. By Proposition \ref{prop:uniform-convergence-action}, $H$ acts on $\theta(\partial\Gamma_H)$ as a uniform convergence group. Let $C$ be a compact subset of $\theta(\partial\Gamma_H)$ such that $HC=\theta(\partial\Gamma_H)$. As discussed above, $G$ also acts on $\partial \Gamma_H$ by homeomorphisms, hence it acts on $\theta(\partial\Gamma_H)$. Then, $\theta(\partial\Gamma_H)=HUC$. Fix an element $\xi\in\theta(\partial\Gamma_H).$

{\bf Definition of $\Sigma$:} $\Sigma:=\{gU\in V(\Gamma_G): \xi\in \mathcal A_g(UC)\}.$

Suppose $gU=g'U$. Then, $g'=gu$ for some $u \in U$. Hence, $\mathcal{A}_{g'}(UC)= \mathcal{A}_{gu}(UC)= \mathcal{A}_{g} (UC)$. Thus, the definition of $\Sigma$ does not depend on the coset representatives of $U$.
 
{\em Proof of Proposition \ref{lemma4}.} 
First of all, we see that $p(\Sigma)=V(\Gamma_Q)$. Note that since $C$ is a fundamental domain, then $\{\mathcal{A}_{h} (UC): h \in H\}$ covers $\theta(\partial\Gamma_H)$. Let $qp(U)\in V(\Gamma_Q)$. Then, there exists $g \in G$ such that $p(g)=q$. Since $\mathcal{A}_{g}$ is a homeomorphism of $\partial \Gamma_{H}$ and $\mathcal{A}_{g}\mathcal{A}_{h}=\mathcal{A}_{gh}$, the sets $\{\mathcal{A}_{gh} (UC): h \in H\}$ also covers $\theta(\partial\Gamma_H)$, so $\xi \in \mathcal{A}_{gh} (UC)$ for some $h$. Thus, there is some representative $gh$ of the coset $gH$ such that $ghU$ is in $\Sigma$. This implies that $p(ghU)=qp(U)$, and hence $p(\Sigma) = V(\Gamma_{Q})$. Clearly, by the choice of the generating set for $G$ and $Q$, $d_{p(A)}(p(g)p(U),p(g')p(U)) \leq d_{A}(gU, g'U)$ for any $gU,g'U \in V(\Gamma_{G})$. Now, let $gU, g’U \in \Sigma$, which implies that $\mathcal{A}_{g} (UC) \cap \mathcal{A}_{g'} (UC) \neq \phi$ as both contain the element $\xi$. This further implies that
$(UC) \cap \mathcal{A}_{g^{-1}g'} (UC) \neq \phi$. To prove the other inequality, in order to show that $\Sigma$ is a quasiisometric section, it suffices to assume $d_{p(A)} (p(g)p(U), p(g')p(U))=1$ and prove that $d_{A}(gU,g'U)$ is bounded above by some uniform constant.

Since, $d_{p(A)}(p(g)p(U),p(g')p(U))=1$, there exists $a\in A$ such that $p(g)p(U)=p(g')p(a)p(U).$ Thus, for some $u\in U$, $p(a^{-1}g'^{-1}gu)=1_Q$. This implies that, for some $h\in H$, $a^{-1}g'^{-1}gu=h$. Since $H$ is normal in $G$, $g'^{-1}g=h'au^{-1}$ for some $h'\in H.$ 
Therefore $UC\cap \mathcal A_{g'^{-1}g}(UC)=UC \cap \mathcal{A}_{h'a} (UC) =UC \cap \mathcal{A}_{h'} (\mathcal{A}_{a} (UC)) \neq \phi$. Since $A$ is finite there are only finitely many choices for $a$; and for each such $a$, since $H$ acts properly on $\theta(\partial\Gamma_H)$, the set $K_{a}:=\{h \in H: UC \cap \mathcal{A}_{h} (\mathcal{A}_{a} (UC)) \neq \phi\}$ is compact and therefore $h'$ belongs $K_a$. Let $c_a$ be the constant as in Lemma \ref{lemma:cosets of cpt set bounded} and let $c=\max\{c_a:a\in A\}$.  Thus, $d_{\Gamma_{G}}(gU,g'U) = d_{\Gamma_{G}} (U, h'aU) \leq d_{\Gamma_{G}} (U, h'U)+ d_{\Gamma_{G}} (h'U, h'aU)\leq c+L$, where $L=\max\{d_A(U,aU):a\in A\}.$  This completes the proof of the proposition.
\qed

\vspace{.2cm}

An immediate consequence of Proposition \ref{lemma4} is the following:
\begin{corollary}
    Suppose $1\to H\to G\to Q\to 1$ is a short exact sequence of open maps of compactly generated TDLC groups such that $H$ is non-elementary hyperbolic. If $G$ is hyperbolic, then $Q$ is also hyperbolic.   \qed
\end{corollary}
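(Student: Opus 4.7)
The plan is to exploit the quasi-isometric section from Proposition \ref{lemma4} to identify (a quasi-isometric copy of) a Cayley-Abels graph of $Q$ as a subset of the Cayley-Abels graph of the hyperbolic group $G$, and then transport hyperbolicity through this identification via Gromov's four-point condition.

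First, I would apply Proposition \ref{lemma4}: since $H$ is a non-elementary hyperbolic TDLC group and $p: G \to Q$ is an open surjection of compactly generated TDLC groups, the map $p$ admits a quasi-isometric section $\Sigma \subseteq V(\Gamma_G)$ with $p(\Sigma) = V(\Gamma_Q)$ and constants $k \geq 1$, $\epsilon_0 \geq 0$ satisfying
$$d_{p(A)}(p(gU), p(g'U)) \;\leq\; d_A(gU, g'U) \;\leq\; k\, d_{p(A)}(p(gU), p(g'U)) + \epsilon_0$$
for all $gU, g'U \in \Sigma$. Second, I would unpack that $p|_\Sigma$ is a quasi-isometry from $(\Sigma, d_A|_\Sigma)$ onto $(\Gamma_Q, d_{p(A)})$: coarse surjectivity follows from $p(\Sigma) = V(\Gamma_Q)$ together with the coarse density of $V(\Gamma_Q)$ in $\Gamma_Q$, while the upper inequality forces fibers of $p|_\Sigma$ to have diameter at most $\epsilon_0$, giving coarse injectivity; combined with the two-sided inequality this is a quasi-isometric equivalence.

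Third, I would transfer hyperbolicity along this quasi-isometry. Because $\Gamma_G$ is Gromov hyperbolic with some constant $\delta$, it satisfies the four-point inequality with constant $\delta$, and the four-point condition is manifestly inherited by every subset in the induced metric; hence $(\Sigma, d_A|_\Sigma)$ is $\delta$-hyperbolic in the four-point sense. Since the four-point condition is stable under quasi-isometries between arbitrary (not necessarily geodesic) metric spaces, with new constant depending on $\delta$, $k$, and $\epsilon_0$, we conclude that $(\Gamma_Q, d_{p(A)})$ is also Gromov hyperbolic in the four-point sense. Finally, $\Gamma_Q$ is a connected graph and hence a geodesic metric space, so the four-point condition is equivalent to the thin-triangle condition \cite[III.H]{bridson-haefliger}. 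This shows $\Gamma_Q$ is hyperbolic, and therefore $Q$ is a hyperbolic TDLC group.

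The principal technical point I anticipate is the transfer of hyperbolicity through the non-geodesic subspace $\Sigma$ with its induced metric. Since $\Sigma$ need not be a geodesic metric space, the standard geodesic-to-geodesic stability arguments do not apply directly; instead one must rely on the more robust four-point formulation of hyperbolicity, which passes to subspaces and is preserved under quasi-isometry irrespective of geodesicity. This is the one place where one should be careful with the constants, but it is a well-known fact, so the remainder of the argument is essentially a direct appeal to Proposition \ref{lemma4}.
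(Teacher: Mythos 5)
Your overall strategy---extract the quasiisometric section $\Sigma$ from Proposition \ref{lemma4} and use it to compare $\Gamma_Q$ with the hyperbolic graph $\Gamma_G$---is the intended one, and your verification that $p|_\Sigma$ is a quasiisometry onto $V(\Gamma_Q)$ is fine. The gap is in the transfer step: the assertion that the four-point condition ``is preserved under quasi-isometry irrespective of geodesicity'' is false. Gromov products are differences of distances, so a $(\lambda,\epsilon)$-quasiisometry with $\lambda>1$ only yields an inequality of the form $(fx|fz)_{fw}\ \geq\ \lambda^{-2}\min\{(fx|fy)_{fw},(fy|fz)_{fw}\}-C$, a multiplicative weakening that is not the four-point condition, and the loss is genuine. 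For a concrete counterexample, take the union of two rays $\{(t,0):t\geq 0\}\cup\{(t,t):t\geq 0\}\subset\R^2$ with the restricted Euclidean metric: it is bi-Lipschitz to a simplicial tree (hence to a $0$-hyperbolic space), but the quadruples $(n,0),(n,n),(2n,0),(2n,2n)$ violate the four-point inequality for every uniform $\delta$ as $n\to\infty$. This is exactly why hyperbolicity is stated as a quasiisometry invariant only for geodesic (or length) spaces; for arbitrary metric spaces it is preserved by rough $(1,\epsilon)$-quasiisometries but not by general ones.

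The repair is short and is what makes the corollary ``immediate'': instead of pushing the four-point condition forward from $\Sigma$ to $\Gamma_Q$, pull hyperbolicity back along the section. The inequalities of Proposition \ref{lemma4} say precisely that $\sigma\colon V(\Gamma_Q)\to\Gamma_G$ is a $(k,\epsilon_0)$-quasiisometric embedding; since $V(\Gamma_Q)$ is $1$-dense in the geodesic space $\Gamma_Q$, this extends to a quasiisometric embedding $\Gamma_Q\to\Gamma_G$ between geodesic spaces with $\Gamma_G$ hyperbolic, so $\Gamma_Q$ is hyperbolic by \cite[Theorem 1.9, III.H]{bridson-haefliger} (the same citation already used in the proof of Lemma \ref{lemma:properties-of-qc}(3)). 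If you prefer to keep $\Sigma$ in the picture, the legitimate route is to apply that theorem to the composite $\Gamma_Q\to\Sigma\hookrightarrow\Gamma_G$, using that $\Sigma$ sits isometrically inside the geodesic hyperbolic space $\Gamma_G$ (or, in general, the Bonk--Schramm isometric embedding of a four-point hyperbolic space into a geodesic one); it is this geodesic ambient space that makes the transfer work, not quasiisometry invariance of the four-point condition.
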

\begin{remark}\label{remark:single-valued-qi-section}
    Given a quasiisometric section $\Sigma$, we choose coset representatives of $U$ to get a single valued quasiisometric section $\Sigma '$. Since $p(\Sigma ')=V(\Gamma_{Q})$, there exists $gU \in \Sigma'$ such that $p(g)p(U)= p(U)$. This implies that $p(g) \in p(U)$ and thus $g^{-1}u\in H$ for some $u\in U$. Since $H$ is normal, $g=hu$ for some $h \in H$, and therefore $gU=hU$. Now, using the left translation $t_{h^{-1}}$, one can assume that $\Sigma '$ contains $U$. Note that $t_{h}(\Sigma ')$ is still a single valued quasiisometric section as $t_{h^{-1}}$ preserves the cosets. Therefore, we can assume that $1 \rightarrow H \rightarrow G \rightarrow Q \rightarrow 1$ is an exact sequence of compactly generated TDLC groups with $H, G$ hyperbolic and $\sigma: V(\Gamma_{Q}) \longrightarrow V(\Gamma_{G})$ is a single-valued quasiisometric section containing $U$. 
\end{remark}


\section{Existence of CT maps for extensions of hyperbolic TDLC groups}\label{section:4}
The goal of this section is to give a proof of Theorem \ref{main theorem}. We closely follow the strategy of Mj (Mitra) \cite{mitra-ct}. Let $H$ and $G$ be as in the statement of Theorem \ref{main theorem}. Let $\Gamma_G$ and $\Gamma_H$ be the Cayley-Abels graphs of $G$ and $H$, respectively and $\iota:\Gamma_H\to\Gamma_G$ be the embedding. Throughout this section, we assume that $\Gamma_H$ is $\delta$-hyperbolic for some $\delta\geq 0$. Firstly, we aim to construct a set $B_{\lambda} \subset \Gamma_{G}$ for a geodesic segment $\lambda \subset \Gamma_{H}$, which turns out to be quasiconvex in $\Gamma_{G}$ (Lemma \ref{lemma:ladder-qc}). Throughout the section, we fix a geodesic segment $\lambda\subset\Gamma_H$ and let $\phi_g$ denote the quasiisometry as in Lemma \ref{lemma:conjugation-qi}. Also, we continue to follow some of the notation from the previous section.
 
 \subsection{Ladder construction} 
 Let $aU_{H},bU_{H}$ denote the endpoints of $\lambda$ and let $\lambda_{g}$ denote a geodesic joining $\phi_{g}(aU_H)$ to $\phi_{g}(bU_H)$ in $\Gamma_{H}$ for $g$ such that $gU \in \sigma(V(\Gamma_{Q}))$. Now, we are ready to define $B_{\lambda}$, which is called the {\em ladder} corresponding to $\lambda$. Define $$B_{\lambda}:=\bigcup_{g:gU \in \sigma(V(\Gamma_{Q}))} t_{g}\iota(\lambda_{g}),$$ where $\sigma$ is a single valued qi section, see Remark \ref{remark:single-valued-qi-section}. As $U \in \sigma(V(\Gamma_{Q}))$, $B_{\lambda}$ contains $\iota(\lambda)$. On $\Gamma_{H}$ define a map $\pi_{g,\lambda}:\Gamma_{H} \longrightarrow \lambda_{g}$ taking $hU_{H}$ to one of the points on $\lambda_{g}$ closest to $hU_{H}$ in the metric $d_{A_{H}}$. Here $\pi_{g,\lambda}$ is defined only on the vertex set, but this is enough for our purposes. Next, we want to define a map $\Pi_{\lambda}:\Gamma_G\to B_{\lambda}$. Let $g'U\in V(\Gamma_G)$. Since $\sigma$ is a single-valued quasiisometric section, there exists a unique $gU \in \sigma(V(\Gamma_{Q}))$ such that $p(g)p(U)=p(g')p(U)$. This implies that $g'^{-1}gu\in H$ for some $u\in U$, and, using the normality of $H$, we see that there exists $h\in H$ such that $g'=ghu$. Therefore, $g'U=ghU=t_g\iota(hU_H).$ Now, define $\Pi_{\lambda}: \Gamma_{G} \rightarrow B_{\lambda}$ by $$\Pi_{\lambda}(g'U)=\Pi_{\lambda}(t_{g}\iota(hU_{H})):= t_{g}\iota(\pi_{g,\lambda}(hU_{H})).$$

 We show that $\Pi_{\lambda}$ does not increase distances by more than a bounded factor. For that, we show that $\pi_{g,\lambda}$ does not increase distances much. 
  This follows from the following lemma whose proof is the same as that of Lemma 3.2 in \cite{mitra-ct}.

 \begin{lemma}\label{lemma:pi_lambda is lipschitz}
     Let $\mu \subset \Gamma_{H}$ be a geodesic segment and let $\pi_{\mu}:\Gamma_{H} \longrightarrow \mu$ be a map taking $hU_{H}$ to one of the points on $\mu$ nearest to $hU_{H}$. Then $d_{A_{H}}(\pi_{\mu}(xU_{H}), \pi_{\mu}(yU_{H})) \leq C_{1}d_{A_{H}}(xU_{H},yU_{H})$ for all $x,y \in H$ where $C_{1}$ depends only on $\delta$.    \qed
 \end{lemma}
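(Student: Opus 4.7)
The plan is to reduce the statement to a local bound on adjacent vertices of $\Gamma_H$ and then propagate it along a geodesic. More precisely, I would prove that there exists a constant $K=K(\delta)$ such that whenever $xU_H,yU_H\in V(\Gamma_H)$ are joined by an edge, $d_{A_H}(\pi_\mu(xU_H),\pi_\mu(yU_H))\leq K$. Granted this, given arbitrary vertices $xU_H, yU_H$ with $d_{A_H}(xU_H,yU_H)=n$, choose a geodesic $xU_H=v_0,v_1,\dots,v_n=yU_H$ between them in $\Gamma_H$ and apply the triangle inequality to the chain $\pi_\mu(v_0),\pi_\mu(v_1),\dots,\pi_\mu(v_n)$ to obtain $d_{A_H}(\pi_\mu(xU_H),\pi_\mu(yU_H))\leq nK=K\,d_{A_H}(xU_H,yU_H)$, so $C_1:=K$ works.

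For the local estimate, the key input is the standard fact from hyperbolic geometry that nearest-point projection onto a geodesic in a $\delta$-hyperbolic geodesic space is coarsely Lipschitz. This is a textbook consequence of the thin triangles condition; see for example \cite[Ch.\ III.H]{bridson-haefliger} (and implicit in Theorem~\ref{stability of quasigeodesics}). The precise statement I would quote is: if $Y$ is any geodesic subset of a $\delta$-hyperbolic space and $\pi_Y$ denotes a nearest-point projection, then $d(\pi_Y(x),\pi_Y(y))\leq d(x,y)+C(\delta)$ for all $x,y$. Applying this to $\mu\subset\Gamma_H$ with $d_{A_H}(x U_H,yU_H)=1$ immediately yields the desired bound with $K=1+C(\delta)$.

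The main obstacle is minor, being largely bookkeeping: the projection $\pi_\mu$ is only defined on vertices and involves a choice of nearest point, so one must verify that the coarse Lipschitz property is insensitive to this choice. This is routine because two different choices for $\pi_\mu(hU_H)$ differ by at most the diameter of the set of nearest points, which is itself bounded by a constant depending only on $\delta$ (by thin triangle considerations applied to a bigon formed by $\mu$ and a concatenation of two geodesics from $hU_H$ to $\mu$). Absorbing this bounded ambiguity into the constant $K$ completes the argument, giving a multiplicative Lipschitz constant $C_1=C_1(\delta)$ as required.
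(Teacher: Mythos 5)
Your proof is correct and follows essentially the same route as the paper's source (the paper defers to Mitra's Lemma 3.2 in \cite{mitra-ct}, whose proof is exactly this reduction to adjacent vertices followed by the coarse-Lipschitz property of nearest-point projection onto a geodesic in a $\delta$-hyperbolic space, with the bounded ambiguity of the projection absorbed into the constant). No gaps to report.
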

 \begin{lemma}\label{finite ball condition}
    Let $1 \rightarrow H \rightarrow G \xrightarrow{p}Q\rightarrow 1$ be as in the statement of Theorem \ref{main theorem} and $\sigma: V(\Gamma_{Q}) \to V(\Gamma_{G})$ be a single valued quasiisometric section obtained from Proposition \ref{lemma4}. Further let $k$ and $\epsilon_{0}$ be as in Proposition \ref{lemma4}. Then $d_{A}(\sigma (p(x)p(U)), \sigma (p(y)p(U))) \leq k_{1}:=k+\epsilon_{0}$ when $d_{A}(xU,yU)=1$.
\end{lemma}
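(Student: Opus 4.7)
The plan is to unwind the definitions and apply the upper bound of the quasiisometric section inequality directly. The key observation is that the map on cosets induced by $p$ is $1$-Lipschitz with respect to the chosen generating sets, because $p(A)$ is taken as the generating set for $Q$.

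First, assume $d_A(xU, yU) = 1$. By definition of the word metric on $V(\Gamma_G)$, there exists $a \in A$ such that $yU = xaU$, equivalently $x^{-1}yU = aU$. Applying the projection $p$, we get $p(y)p(U) = p(x)p(a)p(U)$ with $p(a) \in p(A)$, which shows that $d_{p(A)}(p(x)p(U), p(y)p(U)) \leq 1$.

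Next, since $\sigma(p(x)p(U)), \sigma(p(y)p(U)) \in \Sigma$, I apply the upper inequality from the quasiisometric section property (Proposition \ref{lemma4}) with constants $k \geq 1$ and $\epsilon_0 \geq 0$. Writing $gU := \sigma(p(x)p(U))$ and $g'U := \sigma(p(y)p(U))$, we have $p(g)p(U) = p(x)p(U)$ and $p(g')p(U) = p(y)p(U)$ by construction of $\sigma$, so
\[
d_A(\sigma(p(x)p(U)), \sigma(p(y)p(U))) \;\leq\; k \cdot d_{p(A)}(p(x)p(U), p(y)p(U)) + \epsilon_0 \;\leq\; k + \epsilon_0 \;=\; k_1.
\]
This is the desired inequality.

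There is no significant obstacle: the statement is essentially the observation that a single-valued quasiisometric section, composed with an edge in $\Gamma_G$, lands within a uniformly bounded distance in $\Gamma_G$. The only subtlety is to be careful that $p$ maps adjacent vertices of $\Gamma_G$ to vertices at distance at most $1$ in $\Gamma_Q$, which follows immediately from the compatible choice of generating sets ($p(K) \subseteq p(A)p(U)$) made just before Proposition \ref{lemma4}.
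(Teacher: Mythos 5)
Your proof is correct and follows essentially the same route as the paper: both use the $1$-Lipschitz property of $p$ on cosets (which the paper records inside the proof of Proposition \ref{lemma4}) to get $d_{p(A)}(p(x)p(U),p(y)p(U))\leq 1$, and then apply the upper quasiisometric-section inequality to the two points of $\Sigma$. Your version merely spells out the intermediate step $yU=xaU$ with $a\in A$, which the paper leaves implicit.
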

\begin{proof}
    From Proposition \ref{lemma4}, if $d_{A}(xU,yU)=1$ then  $d_{p(A)}(p(x)p(U), p(y)p(U)) \leq 1$. Hence $d_{A} (\sigma(p(x)p(U)), \sigma(p(y)p(U))) \leq k+\epsilon_{0}=k_{1}$.
\end{proof}

\begin{remark}\label{phi_g is a (K,epsilon qi} 
    Since $\Gamma_{G}$ is locally finite, any ball around $U$ in $\Gamma_{G}$ contains finitely many vertices. Therefore, for all $g \in G$ with $d_{\Gamma_{G}}(U,gU) \leq k_{1}$, there exists $K \geq 1$ and $\epsilon \geq 0$ independent of $g$ such that $\phi_{g}$ is a $(K, \epsilon)$-quasiisometry. From now on, we use these constants as quasiisometry constants for $\phi_g$.
\end{remark} 
  The next lemma is important to us for proving Theorem \ref{main theorem} and its proof is the same as the proof of Lemma 3.6 of \cite{mitra-ct}. Thus, we skip its proof.

\begin{lemma}\label{lemma:relation btwn nearest point projections}
     Let $\mu_{1}$ be a geodesic segment in $\Gamma_{H}$ joining $aU_{H}$ to $bU_{H}$ and $\mu_{2}$ be a geodesic segment joining $\phi_{g}(aU_H)$ to $\phi_{g}(bU_H)$ for some $g$ such that $d_{A}(U,gU)\leq k_{1}$. For $sU_{H}\in V(\Gamma_H)$, let $qU_{H}$ be a vertex on $\mu_{1}$ such that $d_{A_{H}}(sU_H,qU_H) \leq d_{A_{H}}(sU_H,xU_H)$ for all $xU_{H} \in \mu_{1}$. Let $rU_{H}$ be a vertex on $\mu_{2}$ such that $d_{A_{H}}(\phi_{g}(sU_H),rU_H) \leq d_{A_H}(\phi_{g}(sU_H),xU_H)$ for $xU_{H} \in \mu_{2}$. Then,  $d_{A_{H}}(\phi_{g}(qU_H),rU_H) \leq C_{2}$ for some $C_{2}$ depending on $K,k,\epsilon,\delta$.    \qed
\end{lemma}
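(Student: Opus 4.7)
My plan is to relate $\phi_g(qU_H)$ to $rU_H$ via the geometry of centers of geodesic triangles in $\Gamma_H$, rather than by attempting to compare the two nearest-point projections directly (which, as I will explain, is the main conceptual obstacle). First, consider the geodesic triangle $\Delta_1 \subset \Gamma_H$ with vertices $sU_H, aU_H, bU_H$ and sides $\mu_1$, $[sU_H, aU_H]$, $[sU_H, bU_H]$. By $\delta$-hyperbolicity, $\Delta_1$ admits a \emph{center} $o_1$, that is, a point lying within some constant $C_3 = C_3(\delta)$ of each of its three sides (one may take $o_1$ to be the pullback of the branch vertex under the standard tripod comparison). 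A short calculation with Gromov products shows $d_{A_H}(qU_H, o_1) \leq C_3$, since the nearest-point projection of $sU_H$ onto $\mu_1$ is at distance $(bU_H \cdot sU_H)_{aU_H}$ from $aU_H$, up to $O(\delta)$, which is exactly the position of the internal tripod point on $\mu_1$. Likewise, let $\Delta_2 \subset \Gamma_H$ denote the geodesic triangle with vertices $\phi_g(sU_H), \phi_g(aU_H), \phi_g(bU_H)$, let $o_2$ be a center of $\Delta_2$, and note $d_{A_H}(rU_H, o_2) \leq C_3$.

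I then push $o_1$ across $\phi_g$. By Remark \ref{phi_g is a (K,epsilon qi} combined with Lemma \ref{finite ball condition}, the hypothesis $d_A(U, gU) \leq k_1 := k + \epsilon_0$ guarantees that $\phi_g$ is a $(K, \epsilon)$-quasi-isometry of $\Gamma_H$ with $K, \epsilon$ independent of $g$. Hence $\phi_g(\mu_1)$, $\phi_g([s, a])$, $\phi_g([s, b])$ are three $(K, \epsilon)$-quasi-geodesics joining the three vertices of $\Delta_2$, and by stability of quasi-geodesics (Theorem \ref{stability of quasigeodesics}) each lies within Hausdorff distance $D = D(\delta, K, \epsilon)$ of the corresponding geodesic side of $\Delta_2$. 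Since $d_{A_H}(o_1, \text{each side of } \Delta_1) \leq C_3$, a short $\phi_g$-estimate gives $d_{A_H}(\phi_g(o_1), \text{each } \phi_g\text{-image}) \leq KC_3 + \epsilon$, and therefore $d_{A_H}(\phi_g(o_1), \text{each side of } \Delta_2) \leq KC_3 + \epsilon + D$.

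The argument will then conclude via the following fact about $\delta$-hyperbolic spaces: any point lying within distance $R$ of all three sides of a geodesic triangle lies within distance $C_4 = C_4(R, \delta)$ of the triangle's center. This is immediate from tripod comparison: such a point must project to a bounded neighborhood of the branch vertex (otherwise it would be deep in one leg and hence too far from one of the other two sides), and pulling back distorts distances by at most $4\delta$. Applied to $\phi_g(o_1)$ inside $\Delta_2$ this gives $d_{A_H}(\phi_g(o_1), o_2) \leq C_4$, and the triangle inequality with $d_{A_H}(\phi_g(qU_H), \phi_g(o_1)) \leq KC_3 + \epsilon$ and $d_{A_H}(o_2, rU_H) \leq C_3$ yields the desired bound $d_{A_H}(\phi_g(qU_H), rU_H) \leq C_2$, with $C_2$ depending only on $K, k, \epsilon, \delta$. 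The main obstacle, as hinted above, is conceptual: a direct route through the coarse Lipschitz property of nearest-point projection (Lemma \ref{lemma:pi_lambda is lipschitz}) only produces a bound of the form $d_{A_H}(\phi_g(qU_H), rU_H) \leq K \cdot d_{A_H}(sU_H, qU_H) + \text{const}$, which is useless for $s$ far from $\mu_1$. The triangle-center formulation circumvents this because ``closeness to all three sides'' is an $O(\delta)$-rigid condition that survives quasi-isometry with purely additive error, independent of the size of the triangle.
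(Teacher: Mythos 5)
Your proof is correct and is essentially the same argument as the one the paper relies on: the paper omits the proof and defers to Mitra's Lemma~3.6, whose proof likewise rests on the coarse characterization of the nearest-point projection of $sU_H$ onto $\mu_1$ as the center of the triangle $(sU_H,aU_H,bU_H)$ (equivalently, the point through which the concatenated geodesics to $aU_H$ and $bU_H$ form uniform quasigeodesics), a characterization that is stable under the uniform $(K,\epsilon)$-quasiisometry $\phi_g$ by stability of quasigeodesics. Your triangle-center formulation, including the step that a point uniformly close to all three sides is uniformly close to the center, is a faithful rendering of that argument with all constants depending only on $K,k,\epsilon,\delta$ as required.
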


The following theorem shows that $\Pi_{\lambda}$ does not increase distances by more than a bounded factor.

\begin{theorem}
    There exists a constant $C \geq 1$ depending on $k, \epsilon_{0}, K, \epsilon, \delta$ such that for all geodesic segments $\lambda \subset \Gamma_{H}$ and $xU,yU \in V(\Gamma_{G})$, $d_{A}(\Pi_{\lambda}(xU),\Pi_{\lambda}(yU)) \leq Cd_{A}(xU,yU)$.
\end{theorem}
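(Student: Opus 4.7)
\medskip

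\noindent\textbf{Proof plan.} The plan is to reduce to the case $d_A(xU,yU)=1$ and then exploit normality of $H$ together with the three ingredients already in place: the finite-ball bound on the section (Lemma \ref{finite ball condition}), the Lipschitz behaviour of nearest-point projections in $\Gamma_H$ (Lemma \ref{lemma:pi_lambda is lipschitz}), and the compatibility of projections on $\lambda_{g_1}$ and $\lambda_{g_2}$ (Lemma \ref{lemma:relation btwn nearest point projections}). Since $\Pi_\lambda$ is defined on vertices, a path of length $d_A(xU,yU)$ in $\Gamma_G$ lets us iterate, so it suffices to produce a uniform bound $C$ in the unit-distance case.

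Assume $d_A(xU,yU)=1$. Using the single-valued section $\sigma$, write $xU=g_1h_1U$ and $yU=g_2h_2U$, where $g_iU=\sigma(p(x)p(U)),\sigma(p(y)p(U))$ and $h_i\in H$. Lemma \ref{finite ball condition} gives $d_A(g_1U,g_2U)\le k_1$. Apply the isometry $t_{g_1^{-1}}$: then $t_{g_1^{-1}}\Pi_\lambda(xU)=\iota(p_1U_H)$ where $p_1U_H=\pi_{g_1,\lambda}(h_1U_H)$, and $t_{g_1^{-1}}\Pi_\lambda(yU)=g_1^{-1}g_2\,\iota(p_2U_H)$ where $p_2U_H=\pi_{g_2,\lambda}(h_2U_H)$. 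Normality of $H$ rewrites $g_1^{-1}g_2\,p_2=p_2'\,g_1^{-1}g_2$ with $p_2'=\phi_{g_2^{-1}g_1}(p_2)$, so
\[
d_A(t_{g_1^{-1}}\Pi_\lambda(xU),\,t_{g_1^{-1}}\Pi_\lambda(yU))\le d_A(p_1U,\,p_2'U)+d_A(p_2'U,\,p_2'\,g_1^{-1}g_2U),
\]
and the second term equals $d_A(U,g_1^{-1}g_2U)\le k_1$.

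It remains to control $d_A(p_1U,p_2'U)$. Because the inclusion of vertex sets preserves adjacency we have $d_A(\iota(u),\iota(v))\le d_{A_H}(u,v)$, so I will bound $d_{A_H}(p_1,p_2')$. First observe that $d_A(xU,yU)=1$ combined with the section bound gives, after translating by $t_{g_1^{-1}}$ and using normality again, $d_A(h_1U,\phi_{g_2^{-1}g_1}(h_2)U)\le 1+k_1$ in $\Gamma_G$. The proper-embedding Lemma \ref{lemma:proper-embedding} then upgrades this to $d_{A_H}(h_1U_H,\phi_{g_2^{-1}g_1}(h_2)U_H)\le N$ for a constant $N=N(1+k_1)$. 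Now invoke Lemma \ref{lemma:relation btwn nearest point projections} with the role of $\mu_1,\mu_2$ played by $\lambda_{g_2},\lambda_{g_1}$ and $g=g_2^{-1}g_1$ (so that the endpoints of $\lambda_{g_1}$ are $\phi_{g_2^{-1}g_1}$ of those of $\lambda_{g_2}$) and take $s=h_2U_H$; this yields $d_{A_H}(\phi_{g_2^{-1}g_1}(p_2),\pi_{g_1,\lambda}(\phi_{g_2^{-1}g_1}(h_2U_H)))\le C_2$. Finally Lemma \ref{lemma:pi_lambda is lipschitz} gives $d_{A_H}(p_1,\pi_{g_1,\lambda}(\phi_{g_2^{-1}g_1}(h_2U_H)))\le C_1N$, and the triangle inequality packages everything into a constant $C=C_1N+C_2+k_1$.

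The main obstacle is purely bookkeeping: keeping track of which quasiisometry $\phi_g$ is acting where, applying normality to move elements across the non-normal factor $g_1^{-1}g_2$, and choosing the correct instantiation of Lemma \ref{lemma:relation btwn nearest point projections} so that the comparison of projections lines up with the point $\phi_{g_2^{-1}g_1}(h_2)$ produced by the normality rewrite. Once this alignment is set up, the constants $C_1,C_2,k_1,K,\epsilon,\delta,k,\epsilon_0$ all enter linearly and assemble into a single $C$ as required.
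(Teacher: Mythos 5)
Your proposal is correct and follows essentially the same route as the paper's proof: reduce to $d_A(xU,yU)=1$, use Lemma \ref{finite ball condition} to bound $d_A(g_1U,g_2U)$ by $k_1$, properness of $\iota$ to transfer the unit distance into $\Gamma_H$, and then combine Lemma \ref{lemma:pi_lambda is lipschitz} with Lemma \ref{lemma:relation btwn nearest point projections} to compare the two projections. The only cosmetic difference is that you translate by $t_{g_1^{-1}}$ and conjugate $p_2$ across $g_1^{-1}g_2$ (which also absorbs the paper's separate Case 1), whereas the paper introduces the auxiliary point $g_0x_2gU$; the constants assemble identically.
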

\begin{proof}
     It suffices (by repeated use of the triangle inequality) to prove the theorem when $d_{A}(xU,yU)=1$. We consider the following two cases.
     
{\bf Case 1.} Assume $xU,yU \in t_{g}\iota(\Gamma_{H})$ for some $g$ such that $gU \in \sigma(V(\Gamma_{Q} ))$. Let $xU= t_{g}\iota(x_{1}U_{H}), yU= t_{g}\iota(y_{1}U_{H})$ for some $x_{1}, y_{1} \in H$. Then, $d_{A_{H}}(x_{1}U_{H}, y_{1}U_{H}) = 1.$ Therefore, 
 \begin{align*}
     d_{A}(\Pi_{\lambda}(xU),\Pi_{\lambda}(yU)) &= d_{A}(t_{g}\iota(\pi_{g,\lambda}(x_{1}U_{H})),t_{g}\iota(\pi_{g,\lambda}(y_{1}U_{H})))\\ &= d_{A}(\iota(\pi_{g,\lambda}(x_{1}U_{H})),\iota(\pi_{g,\lambda}(y_{1}U_{H})))\\ &\leq d_{A_{H}}(\pi_{g,\lambda}(x_{1}U_{H}),\pi_{g,\lambda}(y_{1}U_{H}))\\ &\leq C_{1} d_{A_{H}}(x_{1}U_{H},y_{1}U_{H}) \text{ (Lemma \ref{lemma:pi_lambda is lipschitz})}\\ &= C_{1}.
 \end{align*}

{\bf Case 2.} Assume that $d_{A}(xU,yU)=1$ but $xU$ and $yU$ do not belong to $t_g\iota(\Gamma_H)$ for some $g\in G$. Let $xU \in t_{g_{0}}\iota(\Gamma_{H}), yU \in t_{g'}\iota(\Gamma_{H})$ for some $g_{0}\neq g'$ such that $g_{0}U, g'U \in \sigma(V(\Gamma_{Q}))$. Let $xU= t_{g_{0}}\iota(x_{2}U_{H}), yU= t_{g'}\iota(y_{2}U_{H}) $ for some $x_{2}, y_{2} \in H$. Therefore, $\sigma(p(x)p(U))= \sigma(p(g_{0}x_{2}U))= \sigma(p(g_{0}x_{2})p(U))= \sigma(p(g_{0})p(U))= g_{0}U$. Similarly, $\sigma(p(y)p(U))= g'U$. Now, by Lemma \ref{finite ball condition} we have, $d_{A}(\sigma(p(x)p(U)), \sigma(p(y)p(U))) \leq k_{1}$ and thus $d_{A}(g_{0}U,g'U) \leq k_{1}$. Letting $g= g_{0}^{-1}g'$, we see that $yU= t_{g_{0}g}\iota(y_{2}U_{H})$. Let $\pi_{g_{0},\lambda}(x_{2}U_{H})=hU_{H}$ for some $h \in H$. Then, $\Pi_{\lambda}(xU)= \Pi_{\lambda}(g_{0}\iota(x_{2}U_{H}))= g_{0}\iota(\pi_{g_{0},\lambda}(x_{2}U_{H}))=g_{0}hU.$ Now, $t_{g_{0}g}\iota\phi_{g}\pi_{g_{0},\lambda}(x_{2}U_{H})= t_{g_{0}g}\iota(g^{-1}hgU_{H})=g_{0}hgU$ and $t_{g_{0}g}\iota\pi_{g_{0}g,\lambda}.\phi_{g}(x_{2}U_{H})= t_{g_{0}g}\iota\pi_{g_{0}g,\lambda}(g^{-1}x_{2}gU_{H})=\Pi_{\lambda}(t_{g_{0}g}\iota(g^{-1}x_{2}gU_{H}))=\Pi_{\lambda}(g_{0}x_{2}gU).$ Note that $g_{0}x_{2}gU=t_{g_{0}g}\iota(g^{-1}x_{2}gU_{H})= t_{g_{0}g}\iota\phi_{g}(x_{2}U_{H})$. This implies that $g_{0}x_{2}gU \in t_{g_{0}g}\iota(\Gamma_{H})$. Also $yU \in t_{g_{0}g}\iota(\Gamma_{H}).$ Therefore 
  \begin{align*}
      d_{A}(\iota(g^{-1}x_{2}gU_{H}),\iota(y_{2}U_{H})) &= d_{A}(g_{0}x_{2}gU, yU) \\ &\leq d_{A}(g_{0}x_{2}gU, g_{0}x_{2}U)+ d_{A}(g_{0}x_{2}U, yU)\\ &= d_{A}(gU,U)+ d_{A}(xU, yU)\\ &\leq k_{1}+1.
  \end{align*}
Since $\iota$ is a proper embedding, there is a constant $L$ independent of $g, \lambda$ such that $d_{A_{H}}(\phi_{g}(x_{2}U_{H}),y_{2}U_{H}) \leq L.$ Thus, by Lemma \ref{lemma:pi_lambda is lipschitz}, we have that $d_{A_{H}}(\pi_{g_{0}g,\lambda}(\phi_{g}(x_{2}U_{H})),\\ \pi_{g_{0}g, \lambda}(y_{2}U_{H})) \leq C_{1}L.$ This implies that, $d_{A}(\iota(\pi_{g_{0}g, \lambda}(\phi_{g}(x_{2}U_{H}))), \iota(\pi_{g_{0}g, \lambda}(y_{2}U_{H})))\leq C_{1}L$, and therefore $d_{A}(t_{g_{0}g}\iota(\pi_{g_{0}g, \lambda}(\phi_{g}(x_{2}U_{H}))), t_{g_{0}g}\iota(\pi_{g_{0}g, \lambda}(y_{2}U_{H})))\leq C_{1}L,$ that is $d_{A}(\Pi_{\lambda}(yU), \Pi_{\lambda}(g_{0}x_{2}gU)) \leq C_{1}L.$ Since $d_{A}(U,gU) \leq k_{1}$, by Remark \ref{phi_g is a (K,epsilon qi}, $\phi_{g}$ is a $(K,\epsilon)$-quasiisometry. Then $\phi_{g}(\lambda_{g_{0}})$ is a $(K, \epsilon)$-quasigeodesic joining the endpoints of $\lambda_{g_{0}g}$. Therefore, by Lemma \ref{lemma:relation btwn nearest point projections}, $d_{A_{H}}(\phi_{g}(\pi_{g_{0}, \lambda}(x_{2}U_{H})), \pi_{g_{0}g, \lambda}(\phi_{g}((x_{2}U_{H})) \leq C_{2}.$ Therefore, $d_{A}(t_{g_{0}g, \lambda}\iota\phi_{g}(\pi_{g_{0}, \lambda}(x_{2}U_{H})), t_{g_{0}g, \lambda}\iota\pi_{g_{0}g, \lambda}(\phi_{g}((x_{2}U_{H})) \leq C_{2}.$ This implies that $d_{A}(g_{0}hgU, \Pi_{\lambda}(g_{0}x_{2}gU)) \leq C_{2}.$
Now, 
  \begin{align*}
      d_{A}(\Pi_{\lambda}(xU),\Pi_{\lambda}(yU)) &= d_{A}(g_{0}hU,\Pi_{\lambda}(yU))\\ &\leq d_{A}(g_{0}hU, g_{0}hgU)+ d_{A}(g_{0}hgU, \Pi_{\lambda}(g_{0}x_{2}gU)) \\           &+d_{A}(\Pi_{\lambda}(g_{0}x_{2}gU), \Pi_{\lambda}(yU))\\ &\leq k_{1}+ C_{2}+ C_{1}L. 
  \end{align*}
  Taking $C= max \{C_{1}, k_{1}+ C_{2}+ C_{1}L\},$ we obtain, for all $xU,yU$ with $d_{A}(xU,yU)= 1$,  $d_{A}(\Pi_{\lambda}(xU),\Pi_{\lambda}(yU)) \leq C.$
\end{proof}

 \begin{lemma}\label{lemma:ladder-qc}
     There exists $C'=C'(\delta, k, \epsilon_{0}, K, \epsilon)$ such that for all geodesic segments $\lambda \subset \Gamma_{H}$, $B_{\lambda}$ is $C'$-quasiconvex in $\Gamma_{G}$.
 \end{lemma}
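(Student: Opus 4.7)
The plan is to deduce quasiconvexity of $B_\lambda$ in $\Gamma_G$ from the $C$-Lipschitz property of the projection $\Pi_\lambda:\Gamma_G\to B_\lambda$ established in the previous theorem, where $C=C(\delta,k,\epsilon_0,K,\epsilon)$. The first observation is that $\Pi_\lambda$ is a retraction onto $B_\lambda$: if $xU\in B_\lambda$, write $xU=t_g\iota(h_0U_H)$ with $gU\in\sigma(V(\Gamma_Q))$ and $h_0U_H\in\lambda_g$; since $\pi_{g,\lambda}$ is a nearest-point projection onto $\lambda_g$ and $h_0U_H$ already lies on $\lambda_g$, we have $\pi_{g,\lambda}(h_0U_H)=h_0U_H$, and hence $\Pi_\lambda(xU)=xU$. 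Combined with the previous theorem, this gives a $C$-Lipschitz retraction $\Pi_\lambda:\Gamma_G\to B_\lambda$.

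Given $u,v\in B_\lambda$ and a geodesic $\alpha=[u,v]$ in $\Gamma_G$, the strategy is to build a uniform quasigeodesic $\gamma$ close to $B_\lambda$ from $u$ to $v$ and invoke stability of quasigeodesics (Theorem \ref{stability of quasigeodesics}) to conclude that $\alpha$ lies in a bounded neighbourhood of $\gamma$, and hence of $B_\lambda$. For this, list the consecutive vertices $v_0=u,v_1,\dots,v_n=v$ along $\alpha$, set $w_i=\Pi_\lambda(v_i)\in B_\lambda$, and concatenate geodesics $[w_i,w_{i+1}]$ in $\Gamma_G$ to form a path $\gamma$ of total length at most $Cn$. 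By the retraction and Lipschitz properties, $w_0=u$, $w_n=v$, and $d_A(w_i,w_{i+1})\le C$, so $\gamma$ stays within $C/2$ of $B_\lambda$ and the upper half of the quasigeodesic inequality for $\gamma$ is immediate from this length estimate.

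The main obstacle is the lower half of the quasigeodesic inequality for $\gamma$: controlling backtracking, i.e.\ ruling out pairs $w_i,w_j$ whose distance in $\Gamma_G$ is much smaller than $|i-j|$. A direct thin-triangle argument on the triangle $u,v,\Pi_\lambda(z)$ for $z\in\alpha$ only yields a bound on $d_A(z,B_\lambda)$ proportional to $(C-1)d_A(u,z)$, which is useless when $C>1$. To control backtracking, the plan is to use the floor-by-floor definition of $\Pi_\lambda$: if $w_i$ and $w_j$ were substantially closer than $|i-j|$, then tracing back to $v_i,v_j$ through the Lipschitz retraction together with thin-triangle comparisons inside the floor(s) where $v_i,v_j$ lie, one can show---using coarse Lipschitzness of nearest-point projection onto $\lambda_g$ (Lemma \ref{lemma:pi_lambda is lipschitz}), the quasi-isometry behaviour of $\phi_g$ (Lemma \ref{lemma:conjugation-qi}), and Lemma \ref{finite ball condition} for transitions between floors---that $v_i$ and $v_j$ would themselves be much closer in $\Gamma_G$ than $|i-j|$, contradicting that $\alpha$ is a geodesic. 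Once $\gamma$ is established as a $(K',K')$-quasigeodesic with $K'=K'(C,\delta)$, Theorem \ref{stability of quasigeodesics} produces the desired constant $C'=C'(\delta,k,\epsilon_0,K,\epsilon)$ with $\alpha\subseteq N_{C'}(B_\lambda)$, which completes the proof.
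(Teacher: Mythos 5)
Your opening observations are correct and do match the skeleton of the paper's argument: $\Pi_\lambda$ restricts to the identity on $B_\lambda$ (since $\pi_{g,\lambda}$ fixes points of $\lambda_g$ and the floor $gU\in\sigma(V(\Gamma_Q))$ containing a vertex of $B_\lambda$ is uniquely determined), and combining this with the preceding theorem gives a coarse $C$-Lipschitz retraction of $\Gamma_G$ onto $B_\lambda$. The concatenated path $\gamma=\bigcup_i[w_i,w_{i+1}]$ with $w_i=\Pi_\lambda(v_i)$ is indeed a path from $u$ to $v$ of length at most $C\,d_A(u,v)$ lying in $N_{C/2}(B_\lambda)$. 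The problem is what you do with $\gamma$ next.

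The gap is exactly at the step you flag as ``the main obstacle,'' and the proposed resolution does not work. To apply Theorem \ref{stability of quasigeodesics} to $\gamma$ you need the lower quasigeodesic bound $d_A(w_i,w_j)\gtrsim |i-j|$, but nothing you have gives it: the coarse-Lipschitz property of $\Pi_\lambda$ bounds $d_A(w_i,w_j)$ from \emph{above} by $C\,d_A(v_i,v_j)$, so from ``$w_i$ and $w_j$ are close'' you cannot ``trace back'' to conclude that $v_i$ and $v_j$ are close --- that inference reverses the inequality. Indeed $\Pi_\lambda$ can collapse distant points of $\alpha$ to nearby points of $B_\lambda$ precisely when $\alpha$ makes a long excursion away from $B_\lambda$, which is exactly the failure of quasiconvexity you are trying to rule out; so the no-backtracking claim is essentially equivalent to the conclusion of the lemma, and the argument is circular at its crux. (A posteriori, once the lemma is known, each $v_i$ lies within $C'$ of $B_\lambda$, hence within a bounded distance of $w_i$, and $\gamma$ \emph{is} a quasigeodesic --- but that cannot be used to prove the lemma.) The repair, which is what the paper does by citing Sardar--Krishna (and which goes back to Mitra's Lemma 3.1, ``a coarse Lipschitz retract of a hyperbolic space is quasiconvex''), is to use $\gamma$ only as a witness that $N_C(B_\lambda)$, with its induced length metric, is connected and at most linearly distorted in $\Gamma_G$: the existence of $\gamma$ gives $d_{ind}(u,v)\le \ell(\gamma)\le C\,d_A(u,v)$, hence the inclusion $N_C(B_\lambda)\hookrightarrow\Gamma_G$ is a qi embedding. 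One then applies stability of quasigeodesics to a \emph{geodesic of the induced length metric} (which lies in $N_C(B_\lambda)$ and is a quasigeodesic of $\Gamma_G$ by the qi embedding), not to $\gamma$ itself; this sidesteps the backtracking question entirely. If you want to avoid citing the external result, you should either run that argument or prove the retraction lemma directly via the exponential divergence of geodesics (every rectifiable path joining the endpoints of a geodesic in a $\delta$-hyperbolic space passes within $\delta\log_2$ of its length of each point of the geodesic), applied to $\gamma$ together with a localization step; in either form, the needed input is the upper length bound on $\gamma$, never a lower quasigeodesic bound.
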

{\em Sketch of the proof:} From the proof of \cite[Proposition 4.6(4)]{sardar-krishna}, it follows that, with induced length metric from $\Gamma_G$, the $C$-neighborhood $N_C(B_{\lambda})$ of $B_{\lambda}$ is connected and qi embedded in $\Gamma_G$. Since $\Gamma_G$ is hyperbolic, $N_C(B_{\lambda})$ is quasiconvex in $\Gamma_G$. Hence, $B_{\lambda}$ is $C'$-quasiconvex for some $C'=C'(\delta,k,\epsilon_0,K,\epsilon).$  \qed

\vspace{.2cm}

 Thus, there exists $C'$ independent of $\lambda$ such that every geodesic with endpoints in $B_{\lambda}$ lies in a $C'$-neighborhood of $B_{\lambda}$. In particular, any geodesic joining the endpoints of $i(\lambda)$ lies in a $C'$-neighborhood of $B_{\lambda}$. Thus, $B_{\lambda}$'s are $C'$-uniformly quasiconvex. Next, we record the last one lemma before proving Theorem \ref{main theorem}.

 \begin{lemma}\label{lemma6}
      For all $gU \in \sigma(V(\Gamma_{Q}))$ and $xU \in t_{g}\iota(\lambda_{g})$, there exists $yU \in \iota(\lambda)$ such that $d_{A}(xU,yU) \leq Rd_{p(A)}(p(U), p(x)p(U))$ for some $R\geq 1$ independent of $\lambda$.
 \end{lemma}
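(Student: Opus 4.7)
The plan is to proceed by induction on $n := d_{p(A)}(p(U), p(x)p(U))$. Write $xU = g x_1 U$ with $x_1 U_H \in \lambda_g$; since $x_1 \in H = \ker p$, one has $p(x)p(U) = p(g)p(U)$, so $n = d_{p(A)}(p(U), p(g)p(U))$. The overall strategy is to \emph{hop one fiber at a time}: from the fiber $t_g \iota(\lambda_g)$ over $p(g)p(U)$ down to the fiber over the preceding vertex on a chosen geodesic in $\Gamma_Q$ joining $p(U)$ to $p(g)p(U)$, picking up a uniformly bounded error at each step, and then invoke the inductive hypothesis on the resulting point, which lies in a fiber strictly closer to $\iota(\lambda)$ in $\Gamma_Q$.

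For the base case $n = 0$, I would observe that $p(g)p(U) = p(U)$ together with single-valuedness of $\sigma$ and $\sigma(p(U)) = U$ (Remark \ref{remark:single-valued-qi-section}) force $gU = U$, so one can take $g = 1$; adopting the convention $\lambda_1 := \lambda$, one has $xU \in \iota(\lambda)$ and sets $yU := xU$.

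For the inductive step $n \geq 1$, fix a geodesic in $\Gamma_Q$ from $p(U)$ to $p(g)p(U)$ and let $p_{n-1}$ denote its penultimate vertex; set $g'U := \sigma(p_{n-1})$. The qi-section property (Proposition \ref{lemma4}) gives $d_A(gU, g'U) \leq k + \epsilon_0 = k_1$. A direct computation from $\phi_g(hU_H) = g^{-1}hgU_H$ yields $\phi_{g^{-1}g'} \circ \phi_g = \phi_{g'}$, so $\phi_{g^{-1}g'}(\lambda_g)$ is a quasigeodesic in $\Gamma_H$ sharing both endpoints with $\lambda_{g'}$. Because $d_A(U, g^{-1}g' U) \leq k_1$, Remark \ref{phi_g is a (K,epsilon qi} makes $\phi_{g^{-1}g'}$ a \emph{uniform} $(K,\epsilon)$-quasiisometry; stability (Theorem \ref{stability of quasigeodesics}) then furnishes $D = D(\delta, K, \epsilon)$ and $y_1 U_H \in \lambda_{g'}$ with $d_{A_H}(\phi_{g^{-1}g'}(x_1 U_H), y_1 U_H) \leq D$. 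Translating to $\Gamma_G$ using that $\iota$ is $1$-Lipschitz on vertices (since $A_H \subset A_G$) and that left-multiplication by $g'$ is an isometry gives $d_A(g x_1 g^{-1}g' U, g' y_1 U) \leq D$; combined with $d_A(g x_1 U, g x_1 g^{-1}g' U) = d_A(gU, g'U) \leq k_1$, the triangle inequality yields $d_A(xU, g' y_1 U) \leq D + k_1$. Now $z' := g' y_1 U \in t_{g'}\iota(\lambda_{g'}) \subset B_\lambda$ satisfies $d_{p(A)}(p(U), p(z')p(U)) = n-1$, so the inductive hypothesis produces $yU \in \iota(\lambda)$ with $d_A(z', yU) \leq R(n-1)$; taking $R := \max\{1, D + k_1\}$ closes the induction.

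The main obstacle will be securing a constant $R$ that is \emph{independent of $\lambda$ and of the fiber element $g$}, which can be arbitrarily far from the identity in $\Gamma_G$. A naive attempt to compare $\lambda_g$ with $\lambda$ directly via $\phi_g^{-1}$ fails because the quasiisometry constants of $\phi_g$ depend on $g$; the virtue of the fiber-by-fiber hopping is that each step only invokes $\phi_{g^{-1}g'}$ with $g^{-1}g' U$ in a ball of radius $k_1$ around $U$ in $\Gamma_G$, precisely the regime where Remark \ref{phi_g is a (K,epsilon qi} supplies uniform quasiisometry constants and hence a uniform stability constant $D$.
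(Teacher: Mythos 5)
Your proof is correct and follows essentially the same route as the paper's: both arguments lift a geodesic in $\Gamma_Q$ from $p(U)$ to $p(g)p(U)$ via the qi section to a chain of fibers at pairwise distance at most $k_1$, use the uniform $(K,\epsilon)$ constants for $\phi_s$ when $d_A(U,sU)\leq k_1$ together with stability of quasigeodesics to hop from $\lambda_{g_{i+1}}$ to $\lambda_{g_i}$ with a uniformly bounded error, and then accumulate the errors over the $n$ steps. The only difference is presentational: the paper proves the single-step estimate and chains it explicitly, whereas you package the chaining as an induction on $n$.
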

 \begin{proof}
     Let $\mu$ be a geodesic segment joining $p(U)$ and $p(x)p(U)$ in $\Gamma_{Q}$. Order the vertices on $\mu$ so that we have a finite sequence $p(U)=y_{0}p(U), y_{1}p(U), \cdots, y_{n}p(U)=p(x)p(U)=p(g)p(U)$ such that $d_{p(A)}(y_{i}p(U), y_{i+1}p(U))=1$ and $d_{p(A)}(p(U),p(x)p(U))=n$. Since $\sigma$ is a quasiisometric section, this gives a sequence $\sigma(y_{i}p(U))=g_{i}U$ such that $d_{A}(g_{i}U,g_{i+1}U) \leq k_{1}$ for $0\leq i\leq (n-1)$.  
     This implies that $d_{A}(U,s_{i}U) \leq k_{1}$, where $s_{i}=g_{i+1}^{-1}g_{i}$. Hence $\phi_{s_{i}}$ is a $(K,\epsilon)$- quasiisometry by Remark \ref{phi_g is a (K,epsilon qi}, where $K,\epsilon$ are independent of $i$. Let $zU \in t_{g_{i+1}}\iota(\lambda_{g_{i+1}})$. Then $zU = t_{g_{i+1}}\iota(qU_{H})$, for some $qU_{H} \in \lambda_{g_{i+1}}$. Now, $t_{g_{i}}\iota\phi_{s_{i}}(qU_{H})= t_{g_{i}}(g_{i}^{-1}g_{i+1}qg_{i+1}^{-1}g_{i}U)= g_{i+1}qg_{i+1}^{-1}g_{i}U$. Since $\phi_{s_{i}}$ is a $(K,\epsilon)$- quasiisometry, $\phi_{s_{i}}(\lambda_{g_{i+1}})$ is a $(K,\epsilon)$- quasigeodesic in $\Gamma_{H}$ joining the endpoints of $\lambda_{g_{i}}$. Then, there exists some $vU_{H} \in \lambda_{g_{i}}$ such that $d_{A_{H}}(vU_{H}, \phi_{s_{i}}(qU_{H})) \leq K'$, where $K'$ depends on $K, \epsilon, \delta$. Therefore, $d_{A}(t_{g_{i}}\iota(vU_{H}), t_{g_{i}}\iota(\phi_{s_{i}}(qU_{H}))) \leq K'$. Let $wU= t_{g_{i}}\iota(vU_{H})$ for some $w \in G$. Therefore, $d_{A}(wU, g_{i+1}qg_{i+1}^{-1}g_{i}U) \leq K'$. Hence, $d_{A}(wU,zU) \leq d_{A}(wU, g_{i+1}qg_{i+1}^{-1}g_{i}U)+ d_{A}(g_{i+1}qg_{i+1}^{-1}g_{i}U, g_{i+1}qU) \leq K'+k_{1}= R$ (say). Therefore, given $zU \in t_{g_{i+1}}\iota(\lambda_{g_{i+1}})$, there is $wU \in t_{g_{i}}\iota(\lambda_{g_{i}})$ such that $d_{A}(wU,zU) \leq R$, $R$ is independent of $\lambda$.

Note that, $g_{n}U=gU$ and $g_0U=U$. Suppose $xU=x_{n}U$. By the above paragraph, there exists $x_{i}U \in t_{g_{i}}\iota(\lambda_{g_{i}})$ for $i=0,1, \cdots, (n-1)$ such that $d_{A}(x_{i}U, x_{i+1}U) \leq R$. Choosing $yU=x_{0}U$ we have $yU \in \iota(\lambda)$ and $d_{A}(xU,yU) \leq Rd_{p(A)}(p(U), p(x)p(U))$.
\end{proof}
     
\subsection{Existence of the CT map for $(H,G)$} With the help of Mitra's criterion, we are now ready to prove Theorem \ref{main theorem}.
\vspace{.2cm}

\textbf{Proof of Theorem \ref{main theorem}.} By Lemma \ref{lemma:mitra-criterion}, it suffices to show that if $\lambda$ is a geodesic segment lying outside an $N$-ball around $U_{H}$ in $\Gamma_{H}$, then any geodesic segment joining the endpoints of $\iota(\lambda)$ in $\Gamma_{G}$ lies outside an $M(N)$-ball around $U$ in $\Gamma_{G}$ and $M(N) \to \infty$ as $N \to \infty$. Since $\iota:\Gamma_{H}\to\Gamma_G$ is $f$-properly embedded for some $f:[0,\infty)\to [0,\infty)$, there exists $f(N)$ such that $\iota(\lambda)$ lies outside the $f(N)$-ball in $\Gamma_{G}$ and $f(N) \to \infty$ as $N \to \infty$. Let $zU$ be a point on a geodesic that joins the endpoints of $\iota(\lambda)$. Since $B_{\lambda}$ is quasiconvex in $\Gamma_G$ containing $\iota(\lambda)$, there is some $xU \in B_{\lambda}$ such that $d_{\Gamma_{G}}(xU,zU) \leq C'$. By Lemma \ref{lemma6}, there is some $yU \in \iota(\lambda)$ such that $d_{A}(xU,yU) \leq Rd_{p(A)}(p(U),p(x)p(U))$. This implies that, using triangle inequality, $d_{A}(U,xU) \geq d_{A}(U,yU)- Rd_{p(A)}(p(U),p(x)p(U)) \geq f(N)- Rd_{p(A)}(p(U),p(x)p(U))$. Also, by the choices of generating sets for $G$ and $Q$, $d_{A}(U,xU) \geq d_{p(A)}(p(U),p(x)p(U))$. By combining these inequalities, $$d_{A}(U,xU) \geq max\{f(N)- Rd_{p(A)}(p(U),p(x)p(U)),d_{p(A)}(p(U),p(x)p(U))\} \geq \frac{f(N)}{R+1}.$$ 

Thus,  $d_{A}(U,zU) \geq d_{A}(U,xU)-  d_{A}(xU,zU) \geq \frac{f(N)}{R+1}-C'= M(N)$ (say). Since $f(N) \to \infty$ as $N \to\infty$ so does $M(N)$.       \qed

\vspace{.2cm}

We end the paper with the following example.

\begin{example}\label{example:ct-map-example}
    Let $D$ be an infinite discrete hyperbolic group that contains an infinite index infinite normal hyperbolic subgroup (for example, let $\Sigma_g$ be a closed connected orientable surface of genus $g\geq 2$ and let $\psi$ be a pseudo-Anosov homeomorphism of $\Sigma_g$. Then, the fundamental group of the mapping torus of $\Sigma_g$ by $\psi$ is an example of such a discrete hyperbolic group), and let $C$ be a compact non-discrete TDLC group. Then, the direct product $G=D\times C$, with the product topology, is a non-discrete TDLC group. By Remark \ref{remark:compact-extension}, $G$ is hyperbolic and the Gromov boundary of $G$ is homeomorphic to the Gromov boundary of $D$. Let $N_D$ be an infinite index infinite normal hyperbolic subgroup of $D$ and let $N_C$ be a compact open normal subgroup of $C$. Then, $N:=N_D\times N_C$ is an infinite index open normal subgroup of $G$, and, by Remark \ref{remark:compact-extension}, $N$ is hyperbolic. Then, by Theorem \ref{main theorem}, the CT map exists for the pair $(N,G)$. 
\end{example}


\vspace{.2cm}
\noindent
{\bf Acknowledgements.} 
The authors would like to thank Pierre-Emmanuel Caprace and Yves Cornulier for their helpful comments on the draft version of the paper.
R. Tomar gratefully acknowledges the postdoctoral fellowship from BICMR, Peking University, and the warm hospitality received during his visit to IIT Roorkee.

\vspace{.2cm}
\noindent
{\bf Conflict of interest:} On behalf of all authors, the corresponding author states
that there is no conflict of interest.  
 
\bibliography{ref}
\bibliographystyle{amsalpha}
\end{document}